\numberwithin{equation}{section} 
\newcommand{\alternatingtodo}[2][]{%
\iftoggle{lmargin}%
{%
\todo[#1]{#2}
\togglefalse{lmargin}
}{%
{%
\let\marginpar\marginnote%
\reversemarginpar%
\todo[#1]{#2}%
}%
\toggletrue{lmargin}%
}%
}%
\theoremstyle{theorem}
\newtheorem{theorem}{Theorem}[section]
\newtheorem{proposition}[theorem]{Proposition}
\newtheorem{lemma}[theorem]{Lemma}
\newtheorem{corollary}[theorem]{Corollary}
\newtheorem{deflem}[theorem]{Definition-Lemma}
\theoremstyle{definition}
\newtheorem{definition}[theorem]{Definition}
\newtheorem{remark}[theorem]{Remark}
\newtheorem{example}[theorem]{Example}
\newtheorem*{sumprob}{Summability Problem}
\definecolor{e-mail}{rgb}{0,.40,.80}
\definecolor{reference}{rgb}{.20,.60,.22}
\definecolor{citation}{rgb}{0,.40,.80}
\newcommand{\Cq}{\mathcal{C}} 
\newcommand{\Ca}{\mathcal{C}} 
\newcommand{\bk}{\mathcal{K}} 
\newcommand{\bkl}{\bk_{\Lambda}} 
\newcommand{\bkq}{\bk_{q}} 
\newcommand{\bke}{{\bk}} 
\newcommand{\ec}{\mathcal{E}} 
\newcommand{\idec}{{o}} 
\newcommand{\ecl}{\ec_{\Lambda}} 
\newcommand{\orbitl}{{\ecl/\tau}} 
\newcommand{\ecq}{\ec_{q}}
\newcommand{\orbitq}{{\ecq/\tau}} 
\newcommand{\orbita}{{\ec/\tau}} 
\newcommand{\C}{\mathbb{C}} 
\newcommand{\Z}{\mathbb{Z}} 
\newcommand{\wpl}{\wp_{\Lambda}}
\newcommand{\zl}{\zeta_{\Lambda}}
\newcommand{\zq}{\zeta_{q}}
\newcommand{\zqd}[1]{\zq^{({#1})}} 
\newcommand{\za}[2]{\zeta^{\mathcal{U}}_{( {#1} , {#2} )}}
\DeclareMathOperator{\ores}{Ores}
\DeclareMathOperator{\pano}{PanOres}
\journal{}
\begin{document}

\begin{frontmatter}

\title{Panorbital residues and elliptic summability}
\author{Carlos E. Arreche} \ead{arreche@utdallas.edu} \author{Matthew W. Babbitt} \ead{mbabbitt2003@gmail.com}
\address{Department of Mathematical Sciences, The University of Texas at Dallas}


\begin{abstract}

 For $\tau$ the translation automorphism defined by a non-torsion point in an elliptic curve, we consider the elliptic summability problem of deciding whether a given elliptic function $f$ is of the form $f=\tau(g)-g$ for some elliptic function $g$. We introduce two new panorbital residues and show that they, together with the orbital residues introduced in 2018 by Dreyfus, Hardouin, Roques, and Singer, comprise a complete obstruction to the elliptic summability problem. The underlying elliptic curve can be described in any of the usual ways: as a complex torus, as a Tate curve, or as a one-dimensional abelian variety. We develop the necessary results from scratch intrinsically within each setting; in the last two of them, we also work in arbitrary characteristic. We include several basic concrete examples of computation of orbital and panorbital residues for some summable and non-summable functions in each setting. We conclude by applying the technology of orbital and panorbital residues to obtain several new results of independent interest.

\end{abstract}

\begin{keyword}
    Elliptic curves, Tate curves, summability of elliptic functions, Riemann-Roch Theorem, Residue Theorem, difference equations over elliptic curves
\end{keyword}

\end{frontmatter}

\tableofcontents

\section{Introduction} 
\vspace{-.04in}
For an elliptic curve $\ec$ defined over a field $\Ca$ and a $\Ca$-point $s$ of $\ec$, addition by $s$ under the elliptic group law $\oplus$ defines an automorphism of $\ec$, which in turn defines a $\Ca$-linear automorphism $\tau$ of the function field $\bk$ of $\ec$. This $\tau$ is of infinite order precisely when $s$ is a non-torsion point. Our main goal is to develop a complete set of $\Ca$-linear obstructions to the following. \vspace{-.02in}
\begin{sumprob}
    Given $f\in \bk$, decide whether there exists $g\in\bk$ such that $f=\tau(g)-g$. \vspace{-.02in}
\end{sumprob} 
\noindent If the answer is affirmative, we say $f$ is \emph{(elliptically) summable}.

The basic summability problem has an interesting (though classical and well-understood) solution when $s$ is a torsion point, say of order $N$. Indeed, in this case $\bk$ is a cyclic Galois extension of degree $N$ over the fixed field $\bk^\tau=:\bk'$, which is identified with the function field of another elliptic curve $\ec'$ such that the inclusion $\bk'\hookrightarrow \bk$ corresponds to an isogeny $\ec\twoheadrightarrow\ec'$ of order $N$ whose kernel is precisely $\Z s=\{\idec,s,\dots,(N-1)s\}$, where we denote by $\idec$ the identity for the elliptic group law. Therefore we can identify the points of $\ec'$ with the set $\orbita$ of $\Z s$-orbits of points in $\ec$. Since the Galois group $\mathrm{Gal}(\bk/\bk')=\langle \tau\rangle$, it follows from Hilbert's Theorem~90 that a given $f\in\bk$ is summable if and only if the Galois-theoretic trace $\mathrm{Tr}_{\bk/\bk'}(f)=\sum_{n=0}^{N-1}\tau^n(f)=0$. Thus when $s$ is a torsion point the Galois-theoretic trace $\mathrm{Tr}_{\bk/\bk'}(f)$ is a complete obstruction to summability.

However, if $s$ is a non-torsion point then just about every aspect of the classical situation above breaks down. The fixed field $\bk^\tau=\bk'$ is just $\Ca$, over which $\bk$ is not a finite Galois extension. The quotient map $\ec\twoheadrightarrow \ec':=\orbita$ is only a homomorphism of abstract abelian groups, and one cannot na\"ively think of the set of orbits $\orbita$ as a reasonable variety or manifold. We cannot appeal to Hilbert's Theorem~90, and in any case the ``trace'' $\mathrm{Tr}_{\bk/\bk'}(f)=\sum_{n\in\Z}\tau^n(f)$ is utterly meaningless (but see \eqref{eq:intro-ores} and \eqref{eq:intro-pano-1} below). Thus when $s$ is a non-torsion point, there is no classical source of obstructions to summability.

But the summability problem is most interesting and fruitful \emph{precisely} when $s$ is a non-torsion point. 
We view this as a fundamental problem in the analytic, algebro-geometric, and arithmetic study of elliptic curves and elliptic functions at large --- interesting in its own right and also having many potential applications to other domains. Let us briefly illustrate the potential range of its interactions with other areas of mathematics by discussing explicitly two recent examples from the literature: elliptic hypergeometric functions (mathematical physics), and walks in the quarter plane (combinatorics).

The elliptic hypergeometric functions were introduced in \cite{Spiridonov2008} as generalizations of the classical Euler-Gauss hypergeometric functions $\phantom{.}_2F_1(a,b,c;z)$. One begins with $q,s\in\C^\times$ such that $|q|,|s|<1$ and $s^\Z\cap q^\Z=\{1\}$, so that $s$ represents a non-torsion point on the Tate elliptic curve $\ecq=\C^\times/q^\Z$. Like their classical counterparts, the elliptic hypergeometric functions $F_{\boldsymbol{\varepsilon}}(z)$ depend on parameters $\boldsymbol{\varepsilon}=(\varepsilon_1,\dots,\varepsilon_7)\in(\C^\times)^7$, which may be chosen freely but for the balancing condition $\bigl(\prod_{j=1}^6\varepsilon_j)\varepsilon_7^2=q^2s$. The role of the classical hypergeometric differential equation is played by a certain second-order scalar linear difference equation satisfied by $F_{\boldsymbol{\varepsilon}}(z)$ with respect to the automorphism $\tau(f(z))=f(sz)$. The coefficients of this difference equation belong to the field $\bkq$ of (multiplicatively) $q$-periodic functions on $\C^\times$, which is identified with the function field of $\ecq$, so that the automorphism $\tau$ corresponds to addition by the point $\check{s}:=s\cdot q^\Z\in\ecq$ with respect to the elliptic group law as discussed above. In \cite{Arreche2021} were given a pair of criteria guaranteeing that the non-trivial solutions to certain second order difference equations, such as the one satisfied by $F_{\boldsymbol{\varepsilon}}(z)$, do not satisfy any non-trivial differential equations over the base field ($\bkq$, in this case). One of these criteria demands the non-summability of a certain (infinite) set of elements of $\bkq$ extracted from the difference equation satisfied by $F_{\boldsymbol{\varepsilon}}(z)$. Using these criteria it was possible to show in \cite{Arreche2021} that, for ``sufficently generic'' parameters $\boldsymbol{\varepsilon}\in(\C^\times)^7$, all of these summability problems have negative answers. Thus the summability problem  played a fundamental role in the proof in \cite{Arreche2021} of the \emph{hypertranscendence} of ``most'' elliptic hypergeometric functions. 

With a subset $\mathcal{D}\subseteq \{-1,0,1\}^2-\{(0,0)\}$ of permissible cardinal directions is associated the generating series $F_\mathcal{D}(x,y,t)=\sum_{i,j,k}a_{i,j,k}x^iy^jt^k$, where $a_{i,j,k}$ counts how many sequences of $k$ steps from $\mathcal{D}$ define a lattice walk beginning at the origin that ends at the point $(i,j)$ and remains confined to the first quadrant. The $a_{i,j,k}$ satisfy a recursion that is encoded into a functional equation for $F_\mathcal{D}(x,y,t)$. As explained in \cite{Dreyfus2018,Hardouin2021,dreyfus-hardouin:2021,dreyfus:2023} and the references therein, for certain choices of $\mathcal{D}$, certain specializations of this functional equation realize the corresponding specializations of $F_\mathcal{D}(x,y,t)$ as solutions to difference equations of the form $\tau(F)-F=f$. In some of the most interesting cases, the automorphism $\tau$ is interpreted as above as induced by the addition of a certain non-torsion point $s$ of an elliptic curve $\ec$ defined over\footnote{
The variable $t$ is sometimes specialized to a convenient complex number as in \cite{Dreyfus2018}, sometimes allowed to vary to obtain an elliptic pencil as in \cite{Hardouin2021}, and sometimes kept as a formal parameter that can be differentiated as in \cite{dreyfus-hardouin:2021,dreyfus:2023}.} $\mathbb{Q}(t)$, and the coefficient $f$ belongs to the function field $\bk$ of $\ec$ as above. The first systematic study of the properties of the summability problem  was undertaken in \cite[Appendix~B]{Dreyfus2018} for the utilitarian purpose of elucidating the differential nature of $F_\mathcal{D}(x,y,t)$, a program which has been successfully continued in \cite{Dreyfus2018,Hardouin2021,dreyfus-hardouin:2021,dreyfus:2023} and by many other authors over a long time using a wide variety of algebraic, analytic, and combinatorial techniques. We again refer to the above references for more details on this interesting story, to which we are ill-suited to do justice.

Working under the assumption that the field $\Ca$ is algebraically closed of \underline{characteristic zero}, the authors of \cite{Dreyfus2018} develop a \underline{partial} obstruction to the summability problem, in the form of \emph{orbital residues}. As they explain, this terminology is suggestive but potentially misleading. A given $f\in\bk$ does not have any residues to speak of. But, choosing once and for all a suitable differential form $\varpi$, one can speak of the residues of $f\varpi$ and abusively think of $\mathrm{res}(f\varpi,\alpha)$ as ``the residue of $f$ at the point $\alpha$''. Similarly, in order to speak of the orbital residues of $f$, one must first make some choices. Thus in \cite{Dreyfus2018} is introduced the notion of a set $\mathcal{U}$ of $\tau$-compatible local uniformizers (see the Definition~\ref{def:uniformizers} as well as a discussion of this important notion in Remark~\ref{rem:outer-uniformizers}). Having chosen such a $\mathcal{U}$, one can define local principal part coefficients $c^\mathcal{U}_k(f,\alpha)$ of $f$ at each $\alpha$ in $\ec$ for $k\geq 1$, all but finitely many of which are non-zero. The \emph{orbital residue} of $f$ at $\tau$-orbit $\omega\in\orbita$ of order $k$ (relative to $\mathcal{U}$) is \vspace{-.04in}\begin{equation}\label{eq:intro-ores}\ores^\mathcal{U}(f,\omega,k):=\sum_{\alpha\in\omega}c_k^\mathcal{U}(f,\alpha). \vspace{-.04in}\end{equation} It is easy to see that if $f$ is summable then every orbital residue of $f$ vanishes (because they can be shown to be additive $\tau$-invariant). On the other hand, the vanishing of all the orbital residues of $f$ is not sufficient\footnote{One should be careful not to read too quickly the misprint in \cite[Prop.~A.4]{Hardouin2021}, which is missing a necessary hypothesis from the \cite[Lem.~3.7]{Hardouin2021} cited in its proof. This hypothesis is satisfied in every application of \cite[Prop.~A.4]{Hardouin2021} made within \cite{Hardouin2021}.} to guarantee the summability of $f$. In order to simplify the exposition, we still assume as in \cite{Dreyfus2018,Hardouin2021} that $\Ca$ is algebraically closed. But, in contrast, we work over a field $\Ca$ of \underline{arbitrary characteristic}.

The flagship contribution of the present work is the introduction of two additional $\Ca$-linear obstructions to the summability problem, which we call \emph{panorbital residues} because they amalgamate the local data from all the orbits. To define them, one must add to the choice of $\mathcal{U}$ another choice $\mathcal{R}$, consisting of one representative $\alpha_\omega$ in $\ec$ for each orbit $\omega\in\orbita$. We call the pair of (sets of) choices $\mathbf{P}=(\mathcal{R},\mathcal{U})$ a $\tau$-\emph{pinning} of $\ec$ (see Definition~\ref{def:pinninga}). Both panorbital residues of $f$ (relative to $\mathbf{P}$) are given explicitly in Definition~\ref{def:pano-a} as certain linear combinations of the same $c^\mathcal{U}_k(f,\alpha)$ used to define the orbital residues of $f$ in \eqref{eq:intro-ores}. One of our new obstructions admits the following concrete interpretation. As we show in Proposition~\ref{prop:diff-pano}, there exists a unique $\tau$-invariant meromorphic differential form $\varpi$ on $\ec$ such that the panorbital residue of order $1$ of $f$ is given by the ``twisted'' sum of classical residues \vspace{-.04in}\begin{equation}\label{eq:intro-pano-1}
    \pano^\mathbf{P}(f,1)=\sum_{\omega\in\orbita}\sum_{n\in\Z}n\cdot\mathrm{res}\bigl(f\varpi,\alpha_\omega\oplus ns\bigr).\vspace{-.04in}
\end{equation} Thus one can harmlessly redefine $\pano^\mathbf{P}(f,1)$ by \eqref{eq:intro-pano-1}, relative to \emph{any} non-zero invariant differential $\varpi$, in place of the official Definition~\ref{def:pano-a} whenever that is convenient (as it often is!). This new obstruction \eqref{eq:intro-pano-1} is also clearly additive, and the more subtle fact that it is also $\tau$-invariant is seen from the vanishing of the sum of all the residues of $f\varpi$ by the Residue Theorem. In contrast, we do not have a similarly compelling and meaningful description of the last obstruction $\pano^\mathbf{P}(f,0)$, which is more technical, difficult to work with, and (to us) still somewhat mysterious. And yet with the definitions that we have we are able to prove our main result, to the effect that the orbital residues of \cite{Dreyfus2018}, together our panorbital residues, comprise a complete obstruction to the summability problem.

\begin{theorem}\label{thm:main}
    The elliptic function $f\in\bk$ is elliptically summable if and only if all its orbital residues and both of its panorbital residues vanish.
\end{theorem}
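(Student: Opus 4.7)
The plan is to prove necessity and sufficiency separately. Necessity will follow from the $\tau$-invariance of all four residues, and sufficiency will proceed by a successive reduction of $f$ modulo coboundaries $\tau(g)-g$ built from the Weierstrass/Tate zeta functions developed in the paper.

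\emph{Necessity.} Assume $f=\tau(g)-g$. Each orbital residue $\ores^{\mathcal{U}}(\cdot,\omega,k)$ is a $\Ca$-linear $\tau$-invariant functional on $\bk$, so $\ores^{\mathcal{U}}(f,\omega,k)=0$ at once. For $\pano^{\mathbf{P}}(f,1)$ I invoke Proposition~\ref{prop:diff-pano}: the $\tau$-invariance of $\varpi$ gives $\mathrm{res}(\tau(g)\varpi,\beta)=\mathrm{res}(g\varpi,\beta\ominus s)$ at every $\beta\in\ec$, so reindexing $n\mapsto n-1$ in the weighted sum \eqref{eq:intro-pano-1} collapses it to an unweighted sum of \emph{all} residues of the global meromorphic differential $g\varpi$, which vanishes by the Residue Theorem. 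A parallel but more technical argument, starting directly from Definition~\ref{def:pano-a}, will handle $\pano^{\mathbf{P}}(f,0)$.

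\emph{Sufficiency.} Assume the four residue obstructions vanish. I reduce $f$ modulo coboundaries in two steps. In \emph{Step 1} I use functions of zeta-type to migrate the principal parts of $f$ onto the chosen representatives $\{\alpha_\omega\}$: for each orbit and each non-representative pole $\beta\ne\alpha_\omega$ in that orbit, subtracting a suitable $\tau(h)-h$ shifts the principal part one step closer to $\alpha_\omega$, and iterating finitely many times (within the finitely many orbits where $f$ has a pole) brings $f$ into a normal form with poles concentrated at the representatives. In \emph{Step 2}, since orbital residues are invariant under coboundary subtraction, the assumption $\ores^{\mathcal{U}}(f,\omega,k)=0$ now forces $c_k^{\mathcal{U}}(f,\alpha_\omega)=0$ for all $k\ge 1$, so after Step 1 the reduced $f$ has no poles and hence lies in $\Ca$.

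The main obstacle is that the zeta functions used in Step 1 are only quasi-periodic, not elliptic, so the ``coboundaries'' $\tau(h)-h$ built from them are not \emph{a priori} genuine coboundaries in $\bk$. Their periodicity defects must cancel globally for the construction to produce an honest elliptic $g$; the vanishing of $\pano^{\mathbf{P}}(f,1)$ is precisely the residue-theoretic balancing condition \eqref{eq:intro-pano-1} that forces this cancellation across orbits. The vanishing of $\pano^{\mathbf{P}}(f,0)$ disposes of the residual constant left at the end of Step 2: a nonzero constant $c$ cannot be summable, because $\tau(g)-g=c$ would force the pole divisor of $g$ to be invariant under $\ominus s$ (hence empty, by the non-torsion hypothesis), reducing to $g\in\Ca$ and $c=0$. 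Assembling these pieces in all three of the paper's settings (complex torus, Tate curve, abstract elliptic curve) will be where the bulk of the work lies.
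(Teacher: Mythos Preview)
Your proposal is correct in outline, but the organization of the sufficiency argument differs from the paper's in a way that matters for the algebraic setting. You migrate all principal parts to the orbit representatives using quasi-periodic ``$h$''s (shifted $\zl$), and then invoke $\pano(f,1)=0$ to show that the \emph{sum} $G$ of these $h$'s is genuinely elliptic, so that $\tau(G)-G$ is an honest coboundary. The paper instead never leaves $\bk$: it subtracts $\tau^n(\varphi^\mathbf{P}_{\omega,n,k})-\varphi^\mathbf{P}_{\omega,n,k}$ with each $\varphi^\mathbf{P}_{\omega,n,k}$ already elliptic (carrying an auxiliary simple pole in the distinguished orbit $\Z s$), which together with the zeta expansion and the vanishing of $\pano(f,0)$ and the orbital residues reduces $f$ to a function with simple poles only along $\Z s$; a second round of elliptic coboundaries built from the $\psi_j=\za{j}{\bullet}$ then reduces this to $\pano(f,1)\cdot\za{1}{0}=0$. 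Your route is conceptually cleaner in the Weierstrass and Tate settings, but in the purely algebraic setting there is no quasi-periodic zeta function to serve as your non-elliptic $h$, so your ``periodicity defects cancel'' mechanism has no carrier; the natural fix---using the elliptic $\varphi^\mathbf{P}_{\omega,n,k}$ and absorbing their auxiliary $\Z s$-poles in a second step---is exactly the paper's argument.

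One further point: your claim that the $\pano(f,0)$ invariance is ``parallel'' to that of $\pano(f,1)$ undersells the difference. The paper does not prove it by a second residue computation; it applies $\tau$ to the zeta expansion \eqref{eq:zetaExp-a} and then uses Corollary~\ref{cor:zetaExp-a} (itself a repackaging of the Residue Theorem) to kill the extra term $\za{0}{-1}\cdot\sum_{\omega,k}\ores^\mathcal{U}(g,\omega,k)\,d^\mathcal{U}_{\omega,k}$ that appears. Starting ``directly from Definition~\ref{def:pano-a}'' as you propose would require controlling the shift $e^\mathbf{P}_{\omega,n-1,k}-e^\mathbf{P}_{\omega,n,k}$ of the structural constants, which is doable but is not the same computation as for $\pano(f,1)$.
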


In Theorem~\ref{thm:dhrs-additive} below we state succinctly and precisely what is proved in \cite{Dreyfus2018} concerning an $f\in\bk$ all of whose orbital residues vanish, and in Remark~\ref{rem:dhrs-additive} we carefully explain how their results and methods compare with ours. So let us here give only a brief and informal summary. When all the orbital residues of $f$ vanish, then $f$ is ``nearly summable'' in the sense that $f=\sigma(g)-g+h$ for some $g,h\in\bk$ such that $h$ has at worst two poles of order $1$ at some $\alpha$ and $\alpha\oplus s$ in $\ec$. By the Riemann-Roch Theorem~\ref{thm:rr}, there is a two-dimensional $\Ca$-vector space of choices for what such an $h$ could be. Our $\pano^\mathbf{P}(f,1)$ and $\pano^\mathbf{P}(f,0)$ can be thought of as the coordinates of the vector space of choices for such an $h$. Indeed, if all (pan)orbital residues of $f$ vanish except possibly for $\pano^\mathbf{P}(f,0)$, then $f$ is ``very nearly summable'' in the sense that there exists (a unique) $g\in\bk$ such that $f=\tau(g)-g+\pano^\mathbf{P}(f,0)$. It is a non-trivial technical challenge to define both panorbital residues (relative to $\mathbf{P}$) uniformly for every $f\in\bk$, regardless of which other (pan)orbital residues of $f$ vanish or not.

We have aimed to write our exposition in a way that is as general as possible while remaining as accessible as possible to whomever might need to utilize (pan)orbital residues for whatever purpose. In service of this goal, we treat from scratch the basic definitions and results concerning (pan)orbital residues and summability intrinsically within each of the three common settings within which elliptic curves are treated, which allows us to minimize the technical prerequisites needed within each setting.

In Section~\ref{sec:weierstrass}, we work over $\Ca=\C$ and consider the elliptic curve $\ecl=\C/\Lambda$ for a lattice $\Lambda\subset \C$, whose function field is identified with the field $\bkl$ of $\Lambda$-periodic meromorphic functions on $\C$. In Section~\ref{sec:tate} we assume that $\Ca$ is complete with respect to a non-archimedean absolute value $| \ \ |$ (but we also allow $\Ca=\C$), and consider the Tate elliptic curve $\ecq=\Cq^\times/q^\Z$ for $q\in\Cq^\times$ such that $|q|<1$, whose function field is identified with the field $\bkq$ of (multiplicatively) $q$-periodic meromorphic functions on $\Cq^\times$. In Section~\ref{sec:algebraic} we treat the most general setting where $\ec$ is an elliptic curve over $\Ca$ in the sense of algebraic geometry (a connected smooth projective curve of genus $1$ with a distinguished $\idec\in\ec(\Ca)$). 

Although these three sections are logically independent of one another, we suggest that the best way to read them is side by side by side. This is because an earlier section is comparatively more conceptual and less technical than a later one, whereas a later section contextualizes the results and constructions of an earlier one and makes more apparent which earlier aspects were essential versus incidental. Our (sometimes onerous) notation is chosen carefully so as to make as salient as possible the analogies that bind together all three settings. 
Each of these sections concludes with several basic examples where we showcase the computation of (pan)orbital residues, in both summable and non-summable cases. Although they are designed to be ``the same'' examples across all three settings, they show how the formal yoga of (pan)orbital residues that applies essentially uniformly across all three settings devolves into rather case-specific sorts of computations in practice ``on the ground''.

In the final Section~\ref{sec:applications} we apply our panorbital residues together with the orbital residues of \cite{Dreyfus2018} to prove several new results. Included among these is a new simple formula for the dimension of the subspace $\mathcal{S}(D)$ consisting of the summable elements in the Riemann-Roch space $\mathcal{L}(D)$ for an effective divisor $D$ in Theorem~\ref{thm:summable-rr} (which we think of as a kind of analogue of the Riemann-Roch Theorem~\ref{thm:rr} for divisors of positive degree). We are also able to prove several new characterizations of various important properties of some small-order systems of linear difference equations over $\ec$ in \S\ref{sec:integrability}. In particular, we obtain in Theorem~\ref{thm:additive-integrability} a surprising characterization of the differential integrability of the fundamental difference equation $\tau(y)-y=f$ in an unknown $y$, which has an amusing twist in positive characteristic and which already in characteristic zero sharpens a fundamental result from \cite{Dreyfus2018} (see Remark~\ref{rem:dhrs-additive}). We obtain a similarly surprising characterization of the differential integrability of the fundamental difference equation $\tau(y)=ay$ with $a\in\bk^\times$, but only in characteristic zero (Theorem~\ref{thm:dlog-integrability-0}), or under a suitable non-integrality hypothesis on the $j$-invariant of $\ec$ in arbitrary characteristic (Corollary~\ref{cor:dlog-integrability-0}). Finally, in Theorem~\ref{thm:constant-gauge-equivalent} we characterize when the difference equation $\tau(y)=ay$ is gauge equivalent to a constant difference equation $\tau(y)=cy$ based only on the divisor of $a$. This is a strictly stronger property than the differential integrability of $\tau(y)=ay$, as we first learned from \cite{hardouin-roques:2023}. Although this last Theorem~\ref{thm:constant-gauge-equivalent} makes no use of (pan)obital residues, their heavy influence in both its statement and its proof is impossible to miss.

\section{Weierstrass setting}\label{sec:weierstrass}

\subsection{Reminders and notation}

Given a lattice $\Lambda\subset \C$, we denote by $\bkl$ the field of $\Lambda$-periodic meromorphic functions on $\C$, which is identified with the field of meromorphic functions on the elliptic curve $\ecl:=\mathbb{C}/\Lambda$. We denote the projection map $\C\rightarrow\ecl:t\mapsto \check{t}:=t+\Lambda$. As usual, the Weierstrass $\zeta$-function and the Weierstrass $\wp$-function are the meromorphic functions on $\C$ given by
\begin{equation}\label{eq:zeta-wp-defs}
\begin{gathered}
\zl(z)=\frac{1}{z}+\sum_{\lambda\in\Lambda-\{0\}}\left(\frac{1}{z-\lambda}+\frac{1}{\lambda}+\frac{z}{\lambda^2}\right);\qquad\text{and}\\ 
\wpl(z)=\frac{1}{z^2}+\sum_{\lambda\in\Lambda-\{0\}} \left(\frac{1}{(z-\lambda)^2}-\frac{1}{\lambda^2}\right)=-\zl'(z).
\end{gathered} \end{equation}
It is well-known that $\wpl(z)$ is $\Lambda$-periodic, and that $\bkl=\C(\wpl,\wpl')$ is a quadratic extension of its subfield $\C(\wpl)$, due to the Weierstrass relation \begin{equation}\label{eq:weierstrass-de} \wpl'(z)^2=4\wpl(z)^3-g_{2,\Lambda}\wpl(z)-g_{3,\Lambda}\end{equation} for certain constants $g_{2,\Lambda},g_{3,\Lambda}\in\C$ that depend only on $\Lambda$ and satisfy $g_{2,\Lambda}^3-27g_{3,\Lambda}^2\neq 0$. Although \mbox{$\zl(z)\notin \bkl$} is not $\Lambda$-periodic, each difference \mbox{$\zl(z+\lambda)-\zl(z)=\eta_\Lambda(\lambda)\in\C$} is a constant that depends only on $\lambda\in\Lambda$ but not on $z$. This immediately implies the following basic result.
\begin{lemma}\label{lem:l-zeta-difference}
    For each $t\in\C$, the difference $\zl(z-t)-\zl(z)\in\bkl$ is \mbox{$\Lambda$-periodic}. Moreover, if $t\notin\Lambda$, then $\zl(z-t)-\zl(z)\in\bkl$ has simple poles at $\check{t}$ and at $\check{0}$ on $\ecl$, and is holomorphic elsewhere in $\C$.
\end{lemma}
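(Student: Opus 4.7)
The plan is to use the quasi-periodicity property of $\zl$ that is already recalled in the paragraph preceding the lemma: for every $\lambda\in\Lambda$ the difference $\zl(z+\lambda)-\zl(z)=\eta_\Lambda(\lambda)$ is a constant depending only on $\lambda$ (and not on $z$ or on how we write the argument).

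For the first assertion, I would compute, for an arbitrary $\lambda\in\Lambda$,
\begin{equation*}
\bigl(\zl((z+\lambda)-t)-\zl(z+\lambda)\bigr)-\bigl(\zl(z-t)-\zl(z)\bigr)
=\bigl(\zl((z-t)+\lambda)-\zl(z-t)\bigr)-\bigl(\zl(z+\lambda)-\zl(z)\bigr).
\end{equation*}
By the quasi-periodicity relation, each of the two bracketed differences on the right-hand side equals $\eta_\Lambda(\lambda)$, so the whole expression vanishes. This shows that $z\mapsto \zl(z-t)-\zl(z)$ is $\Lambda$-periodic, hence descends to a meromorphic function on $\ecl$ and thus belongs to $\bkl$.

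For the second assertion, I would recall from the defining series \eqref{eq:zeta-wp-defs} that $\zl(z)$ has a simple pole with residue $1$ at each $\lambda\in\Lambda$ and is holomorphic elsewhere on $\C$. Consequently $z\mapsto \zl(z-t)$ has simple poles exactly at the translated lattice $t+\Lambda$, and $z\mapsto\zl(z)$ has simple poles exactly at $\Lambda$. If $t\notin\Lambda$, these two pole sets are disjoint subsets of $\C$, so the difference $\zl(z-t)-\zl(z)$ has simple poles at every point of $(t+\Lambda)\cup\Lambda$ and is holomorphic elsewhere. Projecting to $\ecl=\C/\Lambda$, these two $\Lambda$-orbits become the single points $\check{t}$ and $\check{0}$, respectively, which are distinct precisely because $t\notin\Lambda$; this yields the stated pole structure on $\ecl$.

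There is no real obstacle here: both claims reduce to bookkeeping with the classical quasi-periodicity and pole data of $\zl$. The only mild subtlety is to make sure that one invokes the constancy of $\eta_\Lambda(\lambda)$ with the right argument shifted, which is exactly what the first display above is designed to make transparent.
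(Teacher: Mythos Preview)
Your argument is correct and is exactly the unpacking of what the paper means by ``This immediately implies the following basic result'': the paper gives no separate proof of the lemma, relying only on the quasi-periodicity relation $\zl(z+\lambda)-\zl(z)=\eta_\Lambda(\lambda)$ just stated and on the standard pole structure of $\zl$ from \eqref{eq:zeta-wp-defs}. Your displayed computation and the subsequent pole bookkeeping are precisely the details being suppressed.
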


Given any meromorphic function $f(z)$ on $\C$, we write $c_k(f,\alpha)\in\C$ for the coefficients occurring in the principal part of the Laurent series expansion of $f(z)\in\C((z-\alpha))$ at $\alpha\in\C$: \begin{equation}\label{eq:laurent-coeffs-l}f(z)=\sum_{k\geq 1}\frac{c_k(f,\alpha)}{(z-\alpha)^k}+h(z), \qquad\text{with} \quad h(z)\in\C[[z-\alpha]].\end{equation} We define the $c_k(f,\alpha)$ in this way for every $k\in\mathbb{N}$ and every $\alpha\in\C$, even though all but finitely many of them are zero for each given $\alpha\in\C$. We note that if $f(z)\in\bkl$ then the $c_k(f,\alpha)$ depend only on the point $\check{\alpha}\in\ecl$, which we occasionally emphasize by writing $c_k(f,\check{\alpha})$ in this case.

As in the introduction, we denote by $\tau$ the automorphism of $\bkl$ given by $\tau(f)(z)=f(z+s)$ for a fixed $s\in\C-\bigcup_{n\geq 1}\frac{1}{n}\Lambda$, or equivalently, such that $\check{s}$ is a non-torsion point of $\ecl$. The set of $\tau$-\emph{orbits} in $\ecl$ is \[\orbitl:=\ecl/\mathbb{Z}\check{s}.\]
The following definition is a complex analytic reinterpretation of that of \cite[Def.~B.7]{Dreyfus2018} (cf.~\cite[Rem.~B.10]{Dreyfus2018} and \cite[Rem.~A.8]{Hardouin2021}).

\begin{definition}[Orbital residues - Weierstrass version]\label{def:oresl} The \emph{orbital residue} of $f(z)\in\bkl$ at the orbit $\omega\in\orbitl$ of order $k\in\mathbb{N}$ is \[\ores(f,\omega,k):=\sum_{\check{\alpha}\in\omega}c_k(f,\check{\alpha}).\]
\end{definition}

We emphasize that the finite sum above is being taken over the points $\check{\alpha}\in\omega\subset\ecl$, rather than over the set of pre-images of these points in $\C$, which would render the definition meaningless.

\subsection{Pinnings and their resulting zeta-expansions}

The remaining notions and results in this section are all stated relative to the following arbitrary set of choices. 

\begin{definition}[$\tau$-pinnings - Weierstrass version]\label{def:pinningl}
    A $\tau$-\emph{pinning} of $\ecl$ is a choice $\mathcal{R}=\{\alpha_\omega \in\C\ | \ \omega\in\orbitl\}$ of representatives with $\check{\alpha}_\omega\in\omega$ for each $\omega\in\orbitl$.
\end{definition}

The following is a variation on the well-known expansion of $\Lambda$-periodic functions in terms of shifted Weierstrass $\zeta$-functions and their derivatives, adapted to our present purposes.

\begin{proposition}\label{prop:zetaExp-l}
    Let $\mathcal{R}=\{\alpha_\omega \ | \ \omega\in\orbitl\}$ be any $\tau$-pinning of $\ecl$, and let $f(z)\in\bkl$. There exists a unique constant $c_0^\mathcal{R}(f)\in\mathbb{C}$ such that
    \begin{equation} \label{eq:zetaExp-l}
        f(z) = c_0^{\mathcal{R}}(f)+\sum_{\omega\in\orbitl}\sum_{n\in\mathbb{Z}}\sum_{k\geq 1}\frac{(-1)^{k-1}c_k(f,\alpha_\omega+ns)}{(k-1)!}\zl^{(k-1)}(z-\alpha_\omega-ns)
    \end{equation} as meromorphic functions on $\C$.
\end{proposition}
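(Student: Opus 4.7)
The plan is to prove existence and uniqueness by constructing an explicit elliptic function whose principal parts match those of $f$ everywhere on $\ecl$, and then applying Liouville's theorem. Uniqueness of the constant $c_0^{\mathcal{R}}(f)$ will be immediate once existence is known, since every other coefficient in \eqref{eq:zetaExp-l} is pinned down by the Laurent expansion of $f$.

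First I would observe that, since $\zl(z) = 1/z + O(z)$ as $z \to 0$, one has $\frac{(-1)^{k-1}}{(k-1)!}\zl^{(k-1)}(z) = z^{-k} + \text{(holomorphic at $0$)}$, so the function
\[
g(z) := \sum_{\omega\in\orbitl}\sum_{n\in\Z}\sum_{k\geq 1}\frac{(-1)^{k-1}c_k(f,\alpha_\omega+ns)}{(k-1)!}\zl^{(k-1)}(z-\alpha_\omega-ns)
\]
has the same principal part as $f$ at every point of $\C$. Since $f$ has only finitely many pole orbits modulo $\Lambda$, and only finitely many non-vanishing $c_k(f,\check{\beta})$ for each pole, this sum is actually \emph{finite}: the data $\{\alpha_\omega + ns \bmod \Lambda\}$ as $(\omega,n)$ varies enumerates every point of $\ecl$ exactly once, and the triple sum collapses to a finite sum over the poles of $f$ in $\ecl$.

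Next I would verify that $g(z)$ is $\Lambda$-periodic. For each $k \geq 2$ the function $\zl^{(k-1)}$ is already $\Lambda$-periodic (being a derivative of $-\wpl$, or of a higher $\Lambda$-periodic derivative thereof). Only the $k=1$ terms can obstruct periodicity. Using $\zl(z+\lambda-t) - \zl(z-t) = \eta_\Lambda(\lambda)$ for all $t$, one computes that for any $\lambda \in \Lambda$,
\[
g(z+\lambda) - g(z) = \eta_\Lambda(\lambda) \cdot \sum_{\check{\beta}\in\ecl} c_1(f,\check{\beta}).
\]
The classical residue theorem on the compact Riemann surface $\ecl$ gives $\sum_{\check{\beta}\in\ecl} c_1(f,\check{\beta}) = 0$, so $g$ is $\Lambda$-periodic. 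Consequently $f - g \in \bkl$ is $\Lambda$-periodic and, by our matching of principal parts, has no poles in $\C$; so it is a bounded entire function, hence constant by Liouville. Setting $c_0^{\mathcal{R}}(f) := f - g$ yields \eqref{eq:zetaExp-l}. Uniqueness of $c_0^{\mathcal{R}}(f)$ is clear since all other summands on the right-hand side are already determined by $f$ and $\mathcal{R}$.

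The only genuinely delicate point is the periodicity check, where we must leverage the residue theorem on $\ecl$ to cancel the $\eta_\Lambda(\lambda)$ obstruction coming from the non-periodicity of $\zl$; everything else is bookkeeping. A minor care must also be taken to observe that, although \eqref{eq:zetaExp-l} is written as an iterated triple sum over the pinning data, the intrinsic object on each orbit is really $\sum_{n \in \Z}\sum_{k \geq 1}\frac{(-1)^{k-1}c_k(f,\alpha_\omega+ns)}{(k-1)!}\zl^{(k-1)}(z-\alpha_\omega-ns)$, which depends (as a function on $\C$) on the choice of representative $\alpha_\omega$ only through the constant $c_0^{\mathcal{R}}(f)$ that absorbs any shift ambiguity.
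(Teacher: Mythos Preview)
Your proof is correct and follows essentially the same approach as the paper: match principal parts using the shifted $\zl^{(k-1)}$, check $\Lambda$-periodicity (trivial for $k\geq 2$, via the residue theorem for $k=1$), and conclude by Liouville that the difference is constant. The only cosmetic difference is that the paper phrases the residue argument on a fundamental parallelogram in $\C$ rather than on the compact surface $\ecl$, which is equivalent.
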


\begin{proof}
    By construction, the set $\{\alpha_\omega+ns \ | \ \omega\in\orbitl, \ n\in\mathbb{Z}\}$ contains precisely one coset representative in $\C$ of each point in $\ecl$. The rest of the proof proceeds exactly as in \cite[pp.~449-450]{whittakerCourseModernAnalysis2006}, which we briefly recall. Each summand in the right-hand side of \eqref{eq:zetaExp-l} with $k\geq 2$ is $\Lambda$-periodic by \eqref{eq:zeta-wp-defs}. Comparing local expansions, one could immediately reduce to the case where $f(z)$ has only simple poles. The remaining portion of the sum \mbox{$\sum_{\omega,n}c_1(f,\alpha_\omega+ns)\zl(z-\alpha_\omega-ns)$} is seen to be $\Lambda$-periodic by the transformation law of $\zl(z)$ under $\Lambda$-shifts, together with the fact that $\sum_{\omega,n}c_1(f,\alpha_\omega+ns)=0$ by Cauchy's Residue Theorem applied to the differential form $f(z)dz$ over any fundamental parallelogram whose boundary avoids the poles of $f(z)$. It follows that the difference $c_0^\mathcal{R}(f)$ between $f(z)$ and the rightmost sum in \eqref{eq:zetaExp-l} is entire and $\Lambda$-periodic, and therefore it must be constant.
\end{proof}

\subsection{Panorbital residues and summability}

\begin{definition}[Panorbital residues - Weierstrass version] \label{def:pano-l} For a given \mbox{$\tau$-pinning} $\mathcal{R}=\{\alpha_\omega \ | \ \omega\in\orbitl\}$ of $\ecl$, the \emph{panorbital residues} of $f(z)\in\bkl$ of orders $0$ and $1$ relative to $\mathcal{R}$ are respectively
\begin{equation}\label{eq:pano-l-def}
    \begin{gathered}
        \pano^\mathcal{R}(f,0):=c_0^\mathcal{R}(f);\vphantom{\sum_A}\qquad \text{and}\\
    \pano^\mathcal{R}(f,1):=\sum_{\omega\in\orbitl}\sum_{n\in\mathbb{Z}} n\cdot c_1(f,\alpha_\omega+ns).
    \end{gathered}
\end{equation}
\end{definition}

\begin{theorem}\label{thm:mainl} Let $\mathcal{R}=\{\alpha_\omega \ | \ \omega\in\orbitl\}$ be any $\tau$-pinning of $\ecl$, and let $f(z)\in\bkl$. Then $f$ is elliptically summable if and only if $\pano^\mathcal{R}(f,0)=0$, $\pano^\mathcal{R}(f,1)=0$, and $\ores(f,\omega,k)=0$ for every $\omega\in\orbitl$ and $k\geq 1$.
\end{theorem}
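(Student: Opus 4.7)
The plan is to use the uniqueness of the zeta-expansion of Proposition~\ref{prop:zetaExp-l} as the bridge that converts the summability problem into concrete telescoping statements for the coefficients $c_k(f,\alpha)$ that define the (pan)orbital residues. For \textbf{necessity}, assume $f=\tau(g)-g$. The identity $\tau(g)(z)=g(z+s)$ gives $c_k(\tau(g),\alpha)=c_k(g,\alpha+s)$ for every $\alpha$ and $k$; summing over the $\check{s}$-invariant finite support of $c_k(g,\cdot)$ inside a $\tau$-orbit $\omega$ telescopes to $\ores(f,\omega,k)=0$. For $\pano^\mathcal{R}(f,1)$, I would reindex $n\to n-1$ in the definition of $\pano^\mathcal{R}(\tau(g),1)$ to obtain
\[
\pano^\mathcal{R}(\tau(g)-g,1)=-\sum_{\omega\in\orbitl}\,\sum_{n\in\Z} c_1(g,\alpha_\omega+ns),
\]
which is minus the sum of all classical residues of $g$ over $\ecl$ and so vanishes by Cauchy's Residue Theorem applied to $g(z)\,dz$ on a fundamental parallelogram. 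For $\pano^\mathcal{R}(f,0)$, I would write the zeta-expansion of $g$ explicitly, apply $\tau$, reindex the $n$-sum, and recognize $\tau(g)-g$ as itself given by a zeta-expansion of the form~\eqref{eq:zetaExp-l} with constant term $0$; the uniqueness assertion of Proposition~\ref{prop:zetaExp-l} then forces $c_0^\mathcal{R}(\tau(g)-g)=0$.

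\textbf{Sufficiency} is the hard direction. Assuming that every orbital residue and both panorbital residues of $f$ vanish, the zeta-expansion of $f$ has no constant term. I would manufacture $g$ directly by prescribing its principal-part coefficients: set $b_k(\omega,n):=\sum_{m<n}c_k(f,\alpha_\omega+ms)$ and define
\[
g(z) := \sum_{\omega,\,n,\,k\geq 1}\frac{(-1)^{k-1}b_k(\omega,n)}{(k-1)!}\,\zl^{(k-1)}(z-\alpha_\omega-ns).
\]
The vanishing of $\ores(f,\omega,k)$ is exactly what forces $b_k(\omega,n)$ to be zero for $|n|$ sufficiently large, and only finitely many orbits $\omega$ contribute, so the sum is locally finite and defines a meromorphic function on $\C$. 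The recursion $b_k(\omega,n+1)-b_k(\omega,n)=c_k(f,\alpha_\omega+ns)$ is immediate by construction, and via the uniqueness of the zeta-expansion it translates to the pointwise identity $\tau(g)-g=f$; any choice of constant $c_0^\mathcal{R}(g)$ is free and cancels in $\tau(g)-g$.

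The main obstacle will be verifying that the candidate $g$ actually lies in $\bkl$, i.e.\ is $\Lambda$-periodic. Each $\zl^{(k-1)}$ with $k\geq 2$ is already $\Lambda$-periodic by~\eqref{eq:zeta-wp-defs}, so only the $k=1$ layer poses a concern; by the quasiperiodicity $\zl(z+\lambda)-\zl(z)=\eta_\Lambda(\lambda)$, the sum $\sum_{\omega,n}b_1(\omega,n)\zl(z-\alpha_\omega-ns)$ is $\Lambda$-periodic precisely when the single scalar $\sum_{\omega,n}b_1(\omega,n)$ vanishes. Here is where $\pano^\mathcal{R}(f,1)$ enters essentially: an Abel-summation computation, for each $\omega$ separately and using $\ores(f,\omega,1)=0$, rewrites
\[
\sum_{n\in\Z} b_1(\omega,n)=-\sum_{n\in\Z} n\cdot c_1(f,\alpha_\omega+ns),
\]
so summing over orbits yields $\sum_{\omega,n}b_1(\omega,n)=-\pano^\mathcal{R}(f,1)=0$. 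I expect this Abel-summation identity to be the delicate step, and it is precisely what forces the panorbital residue of order $1$ to appear alongside the orbital residues and $\pano^\mathcal{R}(f,0)$ as an honest and unavoidable third obstruction.
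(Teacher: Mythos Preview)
Your proposal is correct. The necessity direction is essentially identical to the paper's argument. For sufficiency you take a genuinely different route: you \emph{construct} a candidate antidifference $g$ directly, by setting its principal-part coefficients to be the partial sums $b_k(\omega,n)=\sum_{m<n}c_k(f,\alpha_\omega+ms)$, and you use the Abel-summation identity $\sum_n b_1(\omega,n)=-\sum_n n\,c_1(f,\alpha_\omega+ns)$ (valid because $\ores(f,\omega,1)=0$) together with $\pano^\mathcal{R}(f,1)=0$ to certify that $g\in\bkl$; the vanishing of $\pano^\mathcal{R}(f,0)$ then matches the constant term. The paper instead \emph{reduces} $f$ by successively subtracting explicit summable elements $\tau^n(\varphi)-\varphi$ (first the $\varphi^\mathcal{R}_{\omega,n,k}$, then the auxiliary $\psi_j$) until the remainder $\bar f$ is forced to be zero. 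Your approach is shorter and more direct in the Weierstrass setting, where a global quasi-periodic $\zl$ is available; the paper's reduction approach buys two things: it produces as a byproduct the \emph{reduced form} $\bar f$ of Remark~\ref{rem:reduced-form-l} even when the obstructions do not vanish, and it transports essentially verbatim to the purely algebraic setting of \S\ref{sec:algebraic}, where there is no analogue of $\zl$ and one must work entirely with elements of $\bk$.
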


\begin{proof}
    First suppose $f(z)=g(z+s)-g(z)$ for some $g(z)\in\bkl$. From the fact that $c_k(\tau(g),\alpha)=c_k(g,\alpha+s)$ for every $\alpha\in\C$ and $k\geq 1$ follows the relation $\ores(\tau(g),\omega,k)=\ores(g,\omega,k)$ for every $\omega\in\orbitl$ and $k\geq 1$, and therefore every $\ores(f,\omega,k)=0$. Moreover, applying $\tau$ to both sides of the zeta expansion \eqref{eq:zetaExp-l} for $g(z)$, we find that similarly $c^\mathcal{R}_0(\tau(g))=c^\mathcal{R}_0(g)$, and therefore $\pano^\mathcal{R}(f,0)=0$. Finally, we see that \begin{multline*}\pano^\mathcal{R}(\tau(g),1) =\sum_{\omega\in\orbitl}\sum_{n\in\mathbb{Z}}n\cdot c_1\bigl(g,\alpha_\omega+(n+1)s\bigr) \\
    =\pano^\mathcal{R}(g,1)-\sum_{\omega\in\orbitl}\sum_{n\in\mathbb{Z}}c_1(g,\alpha_\omega+ns)=\pano^\mathcal{R}(g,1),\end{multline*} by Cauchy's Residue Theorem applied to the differential form $g(z)dz$ over an appropriate fundamental parallelogram in $\C$. Thus $\pano^\mathcal{R}(f,1)=0$.

    To prove the converse, let us now suppose that every orbital residue and both panorbital residues of $f(z)$ are zero. To simplify notation, let us define \begin{equation}\label{eq:phil}\varphi_{\omega,n,k}^\mathcal{R}(z):=\begin{cases} \frac{(-1)^{k-1}}{(k-1)!}\zl^{(k-1)}(z-\alpha_\omega-ns)\in\bkl &\text{for} \ k\geq 2; \\ \zl(z-\alpha_\omega-ns)-\zl(z-ns)\in\bkl & \text{for} \ k=1.\end{cases}\end{equation}for $\omega\in\orbitl$ and  $n\in\mathbb{Z}$. Let us consider 
    \begin{equation}\tilde{f}(z):=f(z)+\sum_{\omega\in\orbitl}\sum_{n\in\mathbb{Z}}\sum_{k\geq 1}c_k(f,\alpha_\omega+ns)\cdot\bigl(\varphi_{\omega,n,k}^\mathcal{R}(z+ns)-\varphi_{\omega,n,k}^\mathcal{R}(z)\bigr).\label{eq:ftildel-def}
    \end{equation}
    Since $\varphi(z+ns)-\varphi(z)$ is summable for any $\varphi(z)\in\bkl$ and $n\in\mathbb{Z}$, $f(z)$ is summable if and only if $\tilde{f}(z)\in\bkl$ is summable. Moreover, since each $\varphi_{\omega,n,k}^{\mathcal{R}}(z+ns) = \varphi_{\omega,0,k}^{\mathcal{R}}(z)$, writing $f(z)$ as in \eqref{eq:zetaExp-l} yields 
    \begin{align}
    \tilde{f}(z)&=0+\sum_{\omega\in\orbitl}\bigg(\sum_{k\geq 1}\ores(f,\omega,k)\cdot\varphi_{\omega,0,k}^\mathcal{R}(z)  +\sum_{n\in\mathbb{Z}}c_1(f,\alpha_\omega+ns)\cdot\zl(z-ns)\bigg) \notag\\
    &=0+\sum_{\omega\in\orbitl}\sum_{n\in\mathbb{Z}}c_1(f,\alpha_\omega+ns)\cdot\zl(z-ns),\label{eq:ftildel-short}\end{align} where the leftmost $0$'s in \eqref{eq:ftildel-short} arise from our assumptions that $c_0^\mathcal{R}(f)$ and the $\ores(f,\omega,k)$ all vanish. Next let us again simplify notation by defining \begin{gather*}\tilde{c}(n):=c_1\bigl(\tilde{f},ns\bigr)=\sum_{\omega\in\orbitl}c_1(f,\alpha_\omega+ns)\in\C,\end{gather*}
    so that $\tilde{f}(z)=\sum_{n\in\mathbb{Z}}\tilde{c}(n)\zl(z-ns)$ in \eqref{eq:ftildel-short} and $\sum_{n\in\mathbb{Z}}n\tilde{c}(n)=\pano^\mathcal{R}(f,1)$. Let us also define the auxiliary $\Lambda$-periodic functions
    \begin{equation}\label{eq:psil-def}
        \psi_j(z):=
        \begin{cases}
            \zl(z-js)-\zl(z-s)\in\bkl & \text{for} \ j\geq 2;\\
            \zl(z-js)-\zl(z)\in\bkl & \text{for} \ j\leq -1.
        \end{cases}
    \end{equation} We now consider the further reduction of $f(z)$ by summable elements of $\bkl$: 
    \begin{equation}
        \bar{f}(z):=\tilde{f}(z)+\sum_{n\leq -1}\sum_{j=n}^{-1}\tilde{c}(n)\cdot\bigl(\psi_j(z-s)-\psi_j(z)\bigr)+\sum_{n\geq 2}\sum_{j=2}^{n}\tilde{c}(n)\cdot\bigl(\psi_j(z+s)-\psi_j(z)\bigr).\label{eq:fbarl-def}
    \end{equation}
    Once again we see that $f(z)$ is summable if and only if $
    \bar{f}(z)$ is summable. Moreover, it follows from the definitions~\eqref{eq:psil-def} that, for $n\geq 2$,
    \begin{equation}\label{eq:psil-comp1}
        \sum_{j=2}^n\bigl(\psi_j(z+s)-\psi_j(z)\bigr)=-\zl(z-ns)+n\zl(z-s)-(n-1)\zl(z);\end{equation} and similarly for $n\leq -1$ that
        \begin{equation}\label{eq:psil-comp2}\sum\limits_{j=n}^{-1}(\psi_j(z-s)-\psi_j(z)) = -\zl(z-ns)+n\zl(z-s)-(n-1)\zl(z).
    \end{equation}
    Thus, writing the $\tilde{f}(z)$ as in \eqref{eq:ftildel-short} in \eqref{eq:fbarl-def}, the computations \eqref{eq:psil-comp1} and \eqref{eq:psil-comp2} yield that \vspace{-.07in}\begin{multline*}
        \bar{f}(z)=\tilde{c}(0)\zl(z)+{\overbrace{\tilde{c}(1)\zl(z)-\tilde{c}(1)\zl(z)}^0}+\tilde{c}(1)\zl(z-s)\\ + \sum_{n\in\mathbb{Z}-\{0,1\}}\Bigl(n\tilde{c}(n)\bigl(\zl(z-s)-\zl(z)\bigr)+\tilde{c}(n)\zl(z)\Bigr)\\
        =\pano^\mathcal{R}(f,1)\cdot\bigl(\zl(z-s)-\zl(z)\bigr)+\left(\sum_{\omega\in\orbitl}\ores(f,\omega,1)\right)\cdot\zl(z)=0.
    \end{multline*}
    This concludes the proof that $f(z)$ is summable.\end{proof}

\begin{remark}\label{rem:reduced-form-l}
    Whether or not any orbital or panorbital residues of $f(z)\in\bkl$ vanish, defining $\tilde{f}(z)$ in terms of $f(z)$ as in \eqref{eq:ftildel-def}, and subsequently $\bar{f}(z)$ in terms of $\tilde{f}(z)$ as in \eqref{eq:fbarl-def}, results in the \emph{reduced form} for $f(z)$ (cf.~\cite[Lem.~B.14]{Dreyfus2018}):
    \begin{multline*}\bar{f}(z)=\pano^\mathcal{R}(f,0)+\pano^\mathcal{R}(f,1)\cdot\bigl(\zl(z-s)-\zl(z)\bigr)\\+\sum_{\omega\in\orbitl}\sum_{k\geq 1}\ores(f,\omega,k)\cdot\varphi^\mathcal{R}_{\omega,0,k},\end{multline*} relative to a choice of $\tau$-pinning $\mathcal{R}$ as in Definition~\ref{def:pinningl} and with the $\varphi^\mathcal{R}_{\omega,0,k}\in\bkl$ defined as in \eqref{eq:phil}, and such that $\bar{f}(z)-f(z)$ is summable.
\end{remark}

\begin{remark} \label{rem:pano-pinning-l}
    For any other choice $\mathcal{R}'=\{\alpha_\omega' \ | \ \omega\in\orbitl\}$ of $\tau$-pinning, there exist unique $\lambda_\omega\in\Lambda$ and $n_\omega\in\mathbb{Z}$ such that $\alpha_\omega'=\alpha_\omega+\lambda_\omega+n_\omega s$ for each $\omega\in\orbitl$. We see immediately from the definition \eqref{eq:pano-l-def} that \begin{equation}\label{eq:pinning-comparison-l1} \pano^{\mathcal{R}'}(f,1)=\pano^\mathcal{R}(f,1)-\sum_{\omega\in\orbitl}n_\omega\ores(f,\omega,1).\end{equation}
    Similarly, comparing the rightmost sums in the $\zeta$-expansions \eqref{eq:zetaExp-l} relative to $\mathcal{R}$ and to $\mathcal{R}'$ we find that \begin{equation}\label{eq:pinning-comparison-l0} \pano^{\mathcal{R}'}(f,0)=\pano^\mathcal{R}(f,0)- \sum_{\omega\in\orbitl} \eta_\Lambda(\lambda_\omega)\ores(f,\omega,1).\end{equation}
    Thus in particular the panorbital residues of $f(z)\in\bkl$ are independent of the choice of $\tau$-pinning of $\ecl$ whenever all the (first-order) orbital residues of $f(z)$ vanish.
\end{remark}

\subsection{Basic examples}\label{subsec:basic-examples-l}

To conclude this section, we illustrate the above general results with some basic examples. We choose once and for all an arbitrary $\tau$-pinning $\mathcal{R}=\{\alpha_\omega\}_{\omega\in\orbitl}$ of $\ecl$ as in Definition~\ref{def:pinningl}.

\begin{example}\label{ex:wp-l}
    Consider first $\wpl(z),\wpl'(z)\in\bkl$ as in \eqref{eq:zeta-wp-defs}. It is easy to see directly that neither $\wpl(z)$ nor $\wpl'(z)$ is summable. We see that both $\pano^\mathcal{R}(\wpl,1)=0$ and $\pano^\mathcal{R}(\wpl',1)=0$, since $c_1(\wpl,\alpha)=0$ and $c_1(\wpl',\alpha)=0$ for every $\alpha\in\C$. The expansions \eqref{eq:zetaExp-l} in this case are given by $\wpl(z)=0+\zl'(z-\alpha_{\Z \check{s}}-ns)$ and $\wpl'(z)=0+\zl''(z-\alpha_{\Z \check{s}}-ns)$, relative to whichever $\alpha_{\Z \check{s}}\in\Lambda+\Z s$ is chosen in $\mathcal{R}$ so that \mbox{$0=\alpha_{\Z \check{s}}+\lambda+ns$}. Thus $\pano^\mathcal{R}(\wpl,0)=0$ and $\pano^\mathcal{R}(\wpl',0)=0$. Finally, we see that every other orbital residue of these functions vanishes except for $\ores(\wpl,\Z \check{s},2)=1$ and $\ores(\wpl',\Z\check{s},3)=-2$, confirming the conclusion of Theorem~\ref{thm:mainl}. A similar computation shows more generally that for $k\geq 2$ each panorbital residue of $\zl^{(k-1)}(z)$ also vanishes, as does each of its orbital residues except for $\ores\bigl(\zl^{(k-1)},\Z\check{s},k)=(-1)^{k-1}(k-1)!$.\end{example}

 \begin{example}\label{ex:zeta-difference-l}Consider $f_t(z):=\zl(z-t)-\zl(z)\in\bkl$ for $t\in\C$ (see Lemma~\ref{lem:l-zeta-difference}). It is not difficult to see directly that $f_t(z)$ cannot be summable unless it is $0$. Let us further simplify notation by writing $\alpha_t,\alpha_0\in\C$ for the representatives chosen in $\mathcal{R}$ for the orbits $\omega_t:=\check{t}\oplus\Z\check{s}$ and $\omega_0:=\check{0}\oplus\Z \check{s}$ in $\ecl$. Then there exist unique $\lambda_t,\lambda_0\in\Lambda$ and $n_t,n_0\in\Z$ such that $t=\alpha_t+\lambda_t+n_ts$ and $0=\alpha_0+\lambda_0+n_0s$. We compute directly that $\pano^\mathcal{R}(f_t,1)=n_t-n_0$. The expansion \eqref{eq:zetaExp-l} in this case is \[f_t(z)=\pano^\mathcal{R}(f_t,0)+\zl(z-\alpha_t-n_ts)-\zl(z-\alpha_0-n_0s),\] and it follows that $\pano^\mathcal{R}(f_t,0)=-\eta_\Lambda(\lambda_t)+\eta_\Lambda(\lambda_0)=\eta_\Lambda(\lambda_0-\lambda_t)$. Moreover $\ores(f_t,\omega_0,1)=0$ if and only if $t\in\Lambda+\Z s$ (i.e., if $\check{t}\in\Z\check{s}$); otherwise, $\ores(f_t,\omega_t,1)=1=-\ores(f_t,\omega_0,1)$. But if $t\in\Lambda+\Z s$ and $n_t\neq n_0$ then $\pano^\mathcal{R}(f_t,1)\neq 0$. Thus the only way to have all orbital residues and the first-order panorbital residue of $f_t(z)$ vanish is to have $\check{t}=\check{0}\in\ecl$. But in this case $f_t(z)=\eta_\Lambda(\lambda_0-\lambda_t)\in\C$ is summable if and only if it is $0$. This again confirms the conclusion of Theorem~\ref{thm:mainl}.\end{example}

\begin{example} \label{ex:psi-sum-l}For $n\in\Z$, consider the elements
    \[\Psi_n(z):=-\zl(z-ns)+n\zl(z-s)-(n-1)\zl(z)\in\bkl.\] Then $\ores(\Psi_n,\Z\check{s},1)=-1+n-(n-1)=0$, and every other orbital residue of $\Psi_n(z)$ also vanishes, since $\Psi_n(z)$ is holomorphic away from $\Lambda\cup (s+\Lambda)\cup (ns+\Lambda)$ and only has at worst first-order poles at these places. For $\alpha_0=-\lambda_0-n_0s\in\mathcal{R}$ as in Example~\ref{ex:zeta-difference-l}, so that $js=\alpha_0+\lambda_0+(j+n_0)s$, we have \[
    \pano^\mathcal{R}(\Psi_n,1)=(n+n_0)\cdot(-1)+(1+n_0)\cdot n+n_0\cdot(1-n)=0.\] The expansion \eqref{eq:zetaExp-l} in this case is \[\Psi_n(z)=c^\mathcal{R}_0(\Psi_n)+(-1)\zl(z-ns+\lambda_0)+n\zl(z-s+\lambda_0)+(1-n)\zl(z+\lambda_0),\] whence \[\pano^\mathcal{R}(\Psi_n,0)=c_0^\mathcal{R}(\Psi_n)=\eta_\Lambda(\lambda_0)-n\eta_\Lambda(\lambda_0)+(n-1)\eta_\Lambda(\lambda_0)=0.\] Therefore by Theorem~\ref{thm:mainl} each $\Psi_n(z)$ is summable. This agrees with the computations \eqref{eq:psil-comp1} and \eqref{eq:psil-comp2} in case $n\neq0,1$, and $\Psi_0(z)=\Psi_1(z)=0$.
\end{example}

\section{Tate setting}
\label{sec:tate}
\subsection{Reminders and notation}
\label{subsec:tateNotation}
In this section we denote by $\Cq$ an algebraically closed field complete with respect to a non-Archimedean absolute value $| \ \ |$. Everything we do here is also valid in the Archimedean case $\Cq=\C$. Given $q\in\Cq$ such that $0<|q|<1$, we denote by $\bkq$ the field of (multiplicatively) $q$-periodic meromorphic functions $f(z)$ on $\Cq^\times$, i.e., such that $f(qz)=f(z)$, which is identified with the field of meromorphic functions on the elliptic curve $\ecq:=\Cq/q^\mathbb{Z}$. We again denote the projection map $\Cq^\times\rightarrow\ecq:t\mapsto \check{t}:=t\cdot q^\Z$. We consider as in \cite[\S2]{Roquette} and \cite[\S5.1]{RigidAnalyticGeometry} (cf.~\cite[Prop.~V.3.2]{silvermanAdvanced}) the \emph{basic theta function} \begin{equation} \label{eq:q-theta-def}
    \theta_q(z)=\prod_{m\geq 0}(1-q^mz^{-1})\cdot\prod_{m\geq 1}(1-q^mz),
\end{equation} which is holomorphic on $\Cq^\times$ and satisfies $\theta_q(qz)=-(qz)^{-1}\theta_q(z)$. Denoting by $\delta=z\frac{d}{dz}$ the Euler derivation, we consider similarly as in \cite[\S3]{Roquette} the ``zeta function'' \begin{equation}\label{eq:q-zeta-def}
    \zeta_q(z):=-\frac{\delta\theta_q(z)}{\theta_q(z)}=\frac{1}{1-z}+\sum_{m\geq 1} \left(\frac{q^mz}{1-q^mz}-\frac{q^mz^{-1}}{1-q^mz^{-1}}\right)
\end{equation} which is meromorphic on $\Cq^\times$ and satisfies $\zeta_q(qz)=\zeta_q(z)+1$. This immediately implies the following analogue of Lemma~\ref{lem:l-zeta-difference}.

\begin{lemma}\label{lem:q-zeta-difference}
    For each $t\in\Cq^\times$, the difference $\zq(t^{-1}z)-\zq(z)\in\bkq$ is \mbox{$q$-periodic}. Moreover, if $t\notin q^\Z$, then $\zq(t^{-1}z)-\zq(z)$ has simple poles at $\check{t}$ and at $\check{1}$ on $\ecq$, and is holomorphic elsewhere in $\Cq^\times$.
\end{lemma}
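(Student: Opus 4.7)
The plan is to mirror the structure of Lemma~\ref{lem:l-zeta-difference}, leveraging the transformation law $\zq(qz)=\zq(z)+1$ for the first assertion and the explicit series \eqref{eq:q-zeta-def} for the second. The Weierstrass quasi-periodicity $\zl(z+\lambda)-\zl(z)=\eta_\Lambda(\lambda)$ has a direct multiplicative analogue here, with the ``quasi-period'' simply equal to $1$, so the periodicity argument will be even cleaner in the Tate setting.

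For the first assertion, I would substitute $qz$ for $z$ into $\zq(t^{-1}z)-\zq(z)$ and apply the transformation law in both slots: $\zq(t^{-1}(qz))=\zq(q(t^{-1}z))=\zq(t^{-1}z)+1$ and $\zq(qz)=\zq(z)+1$, whereupon the two added constants cancel. This shows that $\zq(t^{-1}z)-\zq(z)\in\bkq$ with no further effort.

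For the second assertion, I would read off the pole structure of $\zq$ directly from the series \eqref{eq:q-zeta-def}. The leading term $\tfrac{1}{1-z}$ has a simple pole at $z=1$; for each $m\geq 1$ the summand $\tfrac{q^mz}{1-q^mz}$ has a simple pole at $z=q^{-m}$, and $\tfrac{q^mz^{-1}}{1-q^mz^{-1}}$ has a simple pole at $z=q^m$. Since $|q|<1$, the remaining two series converge uniformly on compact subsets of $\Cq^\times$ away from these distinguished points, so $\zq(z)$ is meromorphic on $\Cq^\times$ with simple poles precisely on $q^\Z$. Consequently $\zq(t^{-1}z)$ is meromorphic with simple poles precisely on $t\cdot q^\Z$.

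The assumption $t\notin q^\Z$ guarantees that the cosets $q^\Z$ and $t\cdot q^\Z$ are disjoint subsets of $\Cq^\times$, so no cancellation occurs and $\zq(t^{-1}z)-\zq(z)$ is holomorphic outside $q^\Z\cup t\cdot q^\Z$, with simple poles at each of these points. Descending to $\ecq=\Cq^\times/q^\Z$, these poles collapse to simple poles at the two distinct points $\check{t}$ and $\check{1}$, as claimed. The only mild technicality is the uniform convergence statement verifying that $\zq(z)$ is genuinely meromorphic on all of $\Cq^\times$ with the asserted poles, which is standard once one bounds each summand using $|q|<1$; this is really the only step with any content, and it is routine.
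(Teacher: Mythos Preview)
Your proof is correct and matches the paper's approach exactly: the paper does not even write out a proof, stating only that the lemma ``immediately'' follows from the transformation law $\zq(qz)=\zq(z)+1$, which is precisely the mechanism you use. Your treatment of the pole structure from the explicit series \eqref{eq:q-zeta-def} is more detailed than anything the paper supplies, but entirely in the same spirit.
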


It is known (see e.g. \cite[\S3]{Roquette}, \cite[\S15.1]{LangElliptic}, or \cite[Thm.~5.1.10]{RigidAnalyticGeometry}) that $\bkq=\Cq(\wp_q,\wp_q')$ is a separable quadratic extension of its subfield $\Cq(\wp_q)$, where
\begin{equation}\label{eq:q-wp-def}\begin{aligned}
    \wp_q(z)&=\delta\zeta_q(z)=\sum_{m\in\Z}\frac{q^mz}{(1-q^mz)^2};\qquad\text{and} \\
    \wp_q'(z)&=\frac{1}{2}\bigl(\delta\wp_q(z)-\wp_q(z)\bigr)=\sum_{m\in\Z}\frac{q^{2m}z^2}{(1-q^mz)^3}.
\end{aligned}
\end{equation} The normalizations $\tilde{\wp}_q(z)=\wp_q(z)+-2s_{1,q}$ and $\tilde{\wp}_q'(z)=\wp_q'(z)+s_{1,q}$, where the constant $s_{1,q}=\sum_{m\geq 1}\frac{q^m}{(1-q^m)}\in\Cq$, yield the following analogue of the Weierstrass equation \eqref{eq:weierstrass-de}, which is valid in arbitrary characteristic:
\begin{equation} \label{eq:q-weierstrass-de}
\tilde{\wp}_q'(z)^2+\tilde{\wp}_q(z)\tilde{\wp}_q'(z)=\tilde{\wp}_q(z)^3+b_q\wp_q(z)+c_q.
\end{equation} The constants $b_q,c_q\in\Cq$ depend only on $q$ and satisfy the non-singularity condition $b_q^2-c_q-64b_q^3+72b_qc_q-432c_q^2\neq 0$.

Given any meromorphic function $f(z)$ on $\Cq^\times$, we write $c_k(f,\alpha)\in\Cq$ for the coefficients occurring in the principal part of the Laurent series expansion of $f(z)$ at $\alpha\in\Cq^\times$:
\begin{equation}\label{eq:laurent-coeffs-q}f(z)=\sum_{k\geq 1}\frac{c_k(f,\alpha)}{\bigl(1-\alpha^{-1}z\bigr)^k}+h(z),\qquad\text{with} \quad h(z)\in \Cq\bigl[\bigl[1-\alpha^{-1}z\bigr]\bigr].\end{equation}
We again define the $c_k(f,\alpha)$ in this way for every $k\in\mathbb{N}$ and every $\alpha\in\Cq^\times$, even though all but finitely many of them are zero for each given $\alpha\in\Cq^\times$. We again note that if $f(z)\in\bkq$ then the $c_k(f,\alpha)$ depend only on the point $\check{\alpha}\in\ecq$, which we occasionally emphasize by writing $c_k(f,\check{\alpha})$ in this case.

As in the introduction, we denote by $\tau$ the automorphism of $\bkq$ given by $\tau(f)(z)=f(sz)$ for a fixed $s\in\Cq^\times$ such that $s$ and $q$ are multiplicatively independent, or equivalently, such that $\check{s}$ is a non-torsion point of $\ecq$. The set of $\tau$-\emph{orbits} in $\ecq$ is \[\orbitq:=\ecq/\Z\check{s}.\]
The following definition is a rigid analytic reinterpretation of \cite[Def.~B.7]{Dreyfus2018}.

\begin{definition}[Orbital residues - Tate version]\label{def:oresq} The \emph{orbital residue} of $f(z)\in\bkq$ at the orbit $\omega\in\orbitq$ of order $k\in\mathbb{N}$ is \[\ores(f,\omega,k):=\sum_{\check{\alpha}\in\omega}c_k(f,\check{\alpha}).\]
\end{definition}

We again emphasize that the finite sum above is being taken over the points $\check{\alpha}\in\omega\subset\ecq$, rather than meaninglessly over the set of pre-images of these \mbox{points in~$\Cq^\times$.}

\subsection{Pinnings and their resulting zeta-expansions}

The following definition is the rigid analytic version of Definition~\ref{def:pinningl}.

\begin{definition}[$\tau$-pinnings - Tate version]\label{def:pinningq}
    A $\tau$-\emph{pinning} of $\ecq$ is a choice $\mathcal{R}=\{\alpha_\omega \in\Cq^\times\ | \ \omega\in\orbitq\}$ of representatives with $\check{\alpha}_\omega\in\omega$ for each $\omega\in\orbitq$.
\end{definition}

The following special functions $\zqd{k}(z) \in\bkq$ will serve as a sort of replacement for the higher derivatives $\frac{(-1)^k}{k!}\frac{d^k}{dz^k}\zl(z)$ that were utilized in the Weierstrass setting. We recall that $\delta=z\frac{d}{dz}$ denotes the Euler derivation.

\begin{definition}\label{def:q-zeta-fake-derivatives}
    We set $\zqd{0}(z):=\zq(z)$ as in \eqref{eq:q-zeta-def}, and, for $k\geq 1$, \[\zqd{k}(z):=\sum_{m\in\Z}\frac{q^{km}z^k}{\bigl(1-q^mz\bigr)^{k+1}}=\frac{1}{k}\Bigl(\delta\zqd{k-1}(z)-(k-1)\zqd{k-1}(z)\Bigr).\]
\end{definition}

Of course, $\zqd{1}(z)=\wp_q(z)$ and $\zqd{2}(z)=\wp_q'(z)$ as in \eqref{eq:q-wp-def}, and evidently every $\zqd{k}(z)\in\bkq$ for $k\geq 1$. Presumably the following rigid analytic version of Proposition~\ref{prop:zetaExp-l} will not be surprising to the experts, but we are not aware of a similar result being mentioned explicitly in the literature.

\begin{proposition}\label{prop:zetaExp-q}
    Let $\mathcal{R}=\{\alpha_\omega \ | \ \omega\in\orbitq\}$ be any $\tau$-pinning of $\ecq$, and let $f(z)\in\bkq$. There exists a unique constant $c_0^\mathcal{R}(f)\in\Cq$ such that
    \begin{equation} \label{eq:zetaExp-q}
        f(z) = c_0^{\mathcal{R}}(f)+\sum_{\omega\in\orbitq}\sum_{n\in\mathbb{Z}}\sum_{k\geq 1}c_k(f,s^n\alpha_\omega)\sum_{i=1}^k\binom{k-1}{i-1}\zqd{i-1}\bigl(s^{-n}\alpha_\omega^{-1}z\bigr)
    \end{equation} as meromorphic functions on $\Cq^\times$.
\end{proposition}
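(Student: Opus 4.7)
The plan is to mirror the proof of the Weierstrass version (Proposition~\ref{prop:zetaExp-l}): I would show that the triple sum on the right-hand side of \eqref{eq:zetaExp-q} has the same principal part as $f$ at every pole in $\Cq^\times$ and is $q$-periodic, so that the difference $f(z)-[\text{triple sum}]$ is a $q$-periodic function on $\Cq^\times$ with no poles, hence descends to a holomorphic function on $\ecq$, hence to a constant (which is then forced to be $c_0^\mathcal{R}(f)$, and whose uniqueness is automatic). The triple sum is effectively finite: $f$ has only finitely many poles on $\ecq$, each of which (since $\check{s}$ is non-torsion) corresponds to a unique $(\omega,n)$ with $\check{s^n\alpha_\omega}$ equal to the given pole, and the $k$-sum truncates at the pole order.

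First I would carry out the principal-part matching at each pole $\beta := s^n\alpha_\omega$. Only the $m=0$ summand in Definition~\ref{def:q-zeta-fake-derivatives} contributes to the principal part of $\zqd{i-1}(\beta^{-1}z)$ at $z=\beta$, yielding $(\beta^{-1}z)^{i-1}(1-\beta^{-1}z)^{-i}$. Expanding $(\beta^{-1}z)^{i-1}=\bigl(1-(1-\beta^{-1}z)\bigr)^{i-1}$ binomially and collecting coefficients of each $(1-\beta^{-1}z)^{-\ell}$, the claim ``$\sum_{i=1}^{k}\binom{k-1}{i-1}\zqd{i-1}(\beta^{-1}z)$ has principal part $(1-\beta^{-1}z)^{-k}$ at $z=\beta$'' reduces to a combinatorial identity that follows from a Vandermonde-type factorization together with the standard $\sum_j\binom{N}{j}(-1)^j = 0$ for $N\geq 1$. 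Because $f$ is $q$-periodic and each $\zqd{j}(\beta^{-1}z)$ with $j\geq 1$ is also $q$-periodic (its defining sum over $m$ being invariant under $m\mapsto m+1$), this matching propagates to every $q$-translate of $\beta$ automatically.

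The remaining task is to show that the triple sum itself is $q$-periodic. Using $\zqd{j}(qw)=\zqd{j}(w)$ for $j\geq 1$ and $\zq(qw)=\zq(w)+1$, a direct computation shows that each inner sum $\sum_{i=1}^k\binom{k-1}{i-1}\zqd{i-1}(\beta^{-1}z)$ shifts by exactly $+1$ under $z\mapsto qz$ (only the $i=1$ term contributes, and contributes $1$). So the net shift of the whole triple sum is $\sum_{\omega,n,k}c_k(f,s^n\alpha_\omega)=\sum_{\check{p}\in\ecq}\sum_{k\geq 1}c_k(f,\check{p})$. The main obstacle, and the heart of the argument, is to see that this last sum vanishes. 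This is the Tate analogue of the Cauchy Residue Theorem step in the Weierstrass proof, but it requires killing the full sum $\sum_k c_k$ at each pole rather than just the first-order residue $c_1$. It follows from the Residue Theorem on $\ecq$ applied to the meromorphic differential $f(z)\,dz/z$, whose form $dz/z$ is the standard holomorphic invariant differential on $\ecq$: a local calculation yields $\res\bigl((1-p^{-1}z)^{-k}\cdot dz/z,\,p\bigr)=-1$ for every $k\geq 1$, so that $\res(f\,dz/z,\check{p}) = -\sum_k c_k(f,\check{p})$, and the residues sum to $0$ over $\check{p}\in\ecq$.
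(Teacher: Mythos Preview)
Your proposal is correct and follows essentially the same route as the paper's proof: the combinatorial principal-part identity for $\sum_{i=1}^k\binom{k-1}{i-1}\zqd{i-1}$, the $q$-periodicity argument reducing to the vanishing of $\sum_{\omega,n,k}c_k(f,s^n\alpha_\omega)$, and the Residue Theorem applied to $f(z)\,dz/z$ all appear in the paper in the same roles. One small presentational wrinkle: your sentence ``this matching propagates to every $q$-translate of $\beta$ automatically'' invokes $q$-periodicity of the $\zqd{j}$ only for $j\geq 1$, leaving the $\zq$ term unaddressed at that moment; either note that $\zq(qw)-\zq(w)=1$ is holomorphic (so principal parts are unchanged), or simply defer the propagation until after you have established $q$-periodicity of the full triple sum in the next paragraph.
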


\begin{proof}
     First we prove the preliminary claim that, for $\ell,k\in\mathbb{N}$, \begin{equation}\label{eq:q-zeta-coeffs} c_\ell\left(\sum_{i=1}^k\binom{k-1}{i-1}\zqd{i-1},1\right)=\begin{cases}1 & \text{if} \ \ell=k; \\ 0 & \text{otherwise}.
    \end{cases}\end{equation} The claim is obvious in case $\ell>k$, since each $\zqd{i-1}(z)$ has a pole of order $i$ at $1$ for $i\geq 1$. It follows from the elementary computation \[\frac{z^{i-1}}{(1-z)^i}=\sum_{\ell=1}^{i}(-1)^{i-\ell}\binom{i-1}{\ell-1}\frac{1}{(1-z)^\ell}\] that $c_\ell\bigl(\zqd{i-1},1\bigr)=(-1)^{i-\ell}\binom{i-1}{\ell-1}$, where as usual $\binom{i-1}{\ell-1}=0$ if $\ell>i$. Thus \begin{equation}\label{eq:combo-identity}c_\ell\left(\sum_{i=1}^k\binom{k-1}{i-1}\zqd{i-1},1\right)=\sum_{i=\ell}^k(-1)^{i-\ell}\binom{k-1}{i-1}\binom{i-1}{\ell-1},\end{equation} concluding the proof of the preliminary claim  (cf.~\cite[Lem.~4.27]{Loehr}). 
    
    By construction, the set $\{s^n\alpha_\omega \ | \ \omega\in\orbitq, \ n\in\Z\}$ contains precisely one coset representative in $\Cq^\times$ of each point in $\ecq$. Each summand in the right-hand side of \eqref{eq:zetaExp-q} with $i\geq 2$ is $q$-periodic by \eqref{eq:q-wp-def} and Definition~\ref{def:q-zeta-fake-derivatives}. Comparing local expansions, one could immediately reduce to the case where $f(z)$ has only simple poles. The remaining portion of the sum \mbox{$\sum_{\omega,n,k}c_k\bigl(f,s^n\alpha_\omega\bigr)\zq\bigl(s^{-n}\alpha_\omega^{-1}z\bigr)$} is seen to be $q$-periodic by the transformation law $\zq(qz)=\zq(z)+1$, together with the fact that $\sum_{\omega,n,k}c_k(f,s^n\alpha_\omega)=0$ by the Residue Theorem \cite[Thm.~2.3.3]{RigidAnalyticGeometry} applied to the differential form $f(z)\frac{dz}{z}$ over any fundamental domain $\bigl\{\alpha\in\Cq^\times \ \big| \ r|q|\leq |\alpha| \leq r\bigr\}$ with $r$ in the valuation group $|\Cq^\times|$ such that $f(z)$ has no poles of absolute value $r$. It follows that the difference $c_0^\mathcal{R}(f)$ between $f(z)$ and the rightmost sum in \eqref{eq:zetaExp-q} is $q$-periodic and holomorphic on $\Cq^\times$, and therefore it must be constant.
\end{proof}

\subsection{Panorbital residues and summability}

\begin{definition}[Panorbital residues - Tate version] \label{def:pano-q} For a given $\tau$-pinning $\mathcal{R}=\{\alpha_\omega \ | \ \omega\in\orbitq\}$ of $\ecq$, the \emph{panorbital residues} of $f(z)\in\bkq$ of orders $0$ and $1$ relative to $\mathcal{R}$ are respectively
\begin{equation}\label{eq:pano-q-def}
    \begin{gathered}
        \pano^\mathcal{R}(f,0):=c_0^\mathcal{R}(f);\vphantom{\sum_A}\qquad \text{and}\\
    \pano^\mathcal{R}(f,1):=\sum_{\omega\in\orbitq}\sum_{n\in\mathbb{Z}} \sum_{k\geq 1} n\cdot c_k\bigl(f,s^n\alpha_\omega\bigr).
    \end{gathered}
\end{equation}
\end{definition}

\begin{remark}\label{rem:pano-wq-comparison} The analogous ``zeta-expansions,'' given in \eqref{eq:zetaExp-l} in the Weierstrass setting and in \eqref{eq:zetaExp-q} in the present Tate setting, provide a direct analogy between the order $0$ panorbital residues given in Definition~\ref{def:pano-l} and Definition~\ref{def:pano-q}. The difference between the corresponding order $1$ panorbital residues, however, may perhaps seem more stark due to the appearance of the higher-order $k\geq 2$ terms in the Tate setting, which are absent in the Weierstrass setting. However, we see that in the Weierstrass setting we could have defined equivalently \[\pano^\mathcal{R}(f,1)=\sum_{\omega\in\orbitl}\sum_{n\in\Z}n\cdot\mathrm{res}(f(z)dz,\alpha_\omega+ns),\] and that similarly in the Tate setting we have \[\pano^\mathcal{R}(f,1)=\sum_{\omega\in\orbitq}\sum_{n\in\Z}n\cdot\mathrm{res}\biggl(f(z)\frac{dz}{z},s^n\alpha_\omega\biggr),\] where in both cases $\mathrm{res}(f(z)\varpi,\alpha)$ refers to the residues of the corresponding meromorphic differential forms $f(z)\varpi$ at the corresponding points $\alpha$. We note that $\varpi=dz$ in the Weierstrass setting and $\varpi=dz/z$ in the Tate setting both descend to a nowhere-vanishing global invariant differential on the corresponding elliptic curve (cf.~Proposition~\ref{prop:diff-pano}).
\end{remark}

\begin{theorem}\label{thm:mainq} Let $\mathcal{R}=\{\alpha_\omega \ | \ \omega\in\orbitq\}$ be any $\tau$-pinning of $\ecq$, and let $f(z)\in\bkq$. Then $f$ is elliptically summable if and only if $\pano^\mathcal{R}(f,0)=0$, $\pano^\mathcal{R}(f,1)=0$, and $\ores(f,\omega,k)=0$ for every $\omega\in\orbitq$ and $k\geq 1$.
\end{theorem}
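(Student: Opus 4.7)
My plan is to mirror the proof of Theorem~\ref{thm:mainl} as closely as possible, using the Tate analogs of the key tools: the functions $\zqd{k-1}(z)$ in place of derivatives of $\zl(z)$, multiplicative shifts $s^n\alpha$ in place of additive shifts $\alpha+ns$, the transformation law $\zq(qz)=\zq(z)+1$ and Lemma~\ref{lem:q-zeta-difference} in place of the $\Lambda$-shift law of $\zl$ and Lemma~\ref{lem:l-zeta-difference}, and the non-archimedean Residue Theorem applied to $f(z)\frac{dz}{z}$ on an annular fundamental domain in place of the complex-analytic one.

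For the forward direction, I would first check that $c_k(\tau(g),\alpha)=c_k(g,s\alpha)$ by substituting $u=1-\alpha^{-1}z$ in the Laurent expansion \eqref{eq:laurent-coeffs-q} and noting $1-(s\alpha)^{-1}(sz)=u$. This immediately yields $\ores(\tau(g),\omega,k)=\ores(g,\omega,k)$. Applying $\tau$ to both sides of the expansion \eqref{eq:zetaExp-q} for $g$ and re-indexing $n\mapsto n+1$ gives $c^\mathcal{R}_0(\tau(g))=c^\mathcal{R}_0(g)$, so $\pano^\mathcal{R}(f,0)=0$. Finally, the same computation as in the Weierstrass proof, after re-indexing, yields
\[\pano^\mathcal{R}(\tau(g),1)=\pano^\mathcal{R}(g,1)-\sum_{\omega,n,k}c_k(g,s^n\alpha_\omega),\]
where the latter sum vanishes by Remark~\ref{rem:pano-wq-comparison} and the Residue Theorem applied to $g(z)\frac{dz}{z}$.

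For the reverse direction, assume all (pan)orbital residues of $f$ vanish. The main technical input is to define the right Tate analogs of the auxiliary $\Lambda$-periodic functions $\varphi^\mathcal{R}_{\omega,n,k}$ from \eqref{eq:phil}. Guided by the zeta-expansion \eqref{eq:zetaExp-q}, for $k\geq 2$ I would set
\[\varphi^\mathcal{R}_{\omega,n,k}(z):=\sum_{i=2}^k\binom{k-1}{i-1}\zqd{i-1}\bigl(s^{-n}\alpha_\omega^{-1}z\bigr)+\Bigl(\zq\bigl(s^{-n}\alpha_\omega^{-1}z\bigr)-\zq\bigl(s^{-n}z\bigr)\Bigr),\]
and for $k=1$ take $\varphi^\mathcal{R}_{\omega,n,1}(z):=\zq(s^{-n}\alpha_\omega^{-1}z)-\zq(s^{-n}z)$; both lie in $\bkq$ by Lemma~\ref{lem:q-zeta-difference} and the fact that each $\zqd{i-1}$ with $i\geq 2$ is $q$-periodic. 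The shift identity $\varphi^\mathcal{R}_{\omega,n,k}(s^nz)=\varphi^\mathcal{R}_{\omega,0,k}(z)$ makes $\varphi^\mathcal{R}_{\omega,n,k}(s^nz)-\varphi^\mathcal{R}_{\omega,n,k}(z)$ summable. I would then define $\tilde{f}(z)$ exactly as in \eqref{eq:ftildel-def} but with these Tate versions of the $\varphi$'s and with multiplicative shifts $s^nz$. Substituting the expansion \eqref{eq:zetaExp-q} and using the vanishing of the orbital residues together with the binomial telescoping that was already established in \eqref{eq:combo-identity} should collapse $\tilde{f}(z)$ to $\sum_{n\in\Z}\tilde{c}(n)\zq(s^{-n}z)$, where $\tilde{c}(n):=\sum_\omega\sum_{k\geq 1}c_k(f,s^n\alpha_\omega)$ and $\sum_n n\tilde{c}(n)=\pano^\mathcal{R}(f,1)$.

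Finally I would introduce the Tate version of the functions $\psi_j$ from \eqref{eq:psil-def}, namely \mbox{$\psi_j(z):=\zq(s^{-j}z)-\zq(s^{-1}z)$} for $j\geq 2$ and $\psi_j(z):=\zq(s^{-j}z)-\zq(z)$ for $j\leq -1$, which are $q$-periodic by Lemma~\ref{lem:q-zeta-difference}. Using the identity $\zq(qz)=\zq(z)+1$ in place of the Legendre relation, the sums $\sum_{j=2}^n(\psi_j(sz)-\psi_j(z))$ and $\sum_{j=n}^{-1}(\psi_j(s^{-1}z)-\psi_j(z))$ should evaluate to $-\zq(s^{-n}z)+n\zq(s^{-1}z)-(n-1)\zq(z)$, exactly mirroring \eqref{eq:psil-comp1}--\eqref{eq:psil-comp2}. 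Subtracting the resulting summable terms from $\tilde{f}$ as in \eqref{eq:fbarl-def} and collecting coefficients should yield $\bar{f}(z)=\pano^\mathcal{R}(f,1)\cdot(\zq(s^{-1}z)-\zq(z))+\sum_\omega\ores(f,\omega,1)\cdot\zq(z)=0$.

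The most delicate step will be step three: bookkeeping the non-$q$-periodic $\zq$-contributions buried inside each $\varphi^\mathcal{R}_{\omega,n,k}$ (since, unlike the Weierstrass situation where only the $k=1$ summand involves the non-periodic $\zl$, in the Tate expansion every $k\geq 1$ contributes a $\zq$-term through the $i=1$ binomial coefficient), and verifying that after subtracting the summable differences these contributions assemble exactly into the $\sum_n\tilde{c}(n)\zq(s^{-n}z)$ needed for the final telescoping. Careful use of \eqref{eq:combo-identity} together with the Residue Theorem should make everything cancel as intended.
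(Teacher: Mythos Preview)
Your proposal is correct and follows essentially the same route as the paper's own proof: the functions $\varphi^\mathcal{R}_{\omega,n,k}$, $\tilde f$, $\psi_j$, and $\bar f$ you define coincide with the paper's, and the telescoping computations match. The only slip is in your final displayed formula, where the coefficient of $\zq(z)$ should be $\sum_{\omega}\sum_{k\geq 1}\ores(f,\omega,k)$ rather than $\sum_\omega\ores(f,\omega,1)$ (consistent with your own definition of $\tilde c(n)$); since all orbital residues are assumed to vanish this does not affect the conclusion.
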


\begin{proof} 
   First suppose $f(z)=g(sz)-g(z)$, for some $g(z)\in\bkq$. From the fact that \mbox{$c_k(\tau(g),\alpha)=c_k(g,s\alpha)$} for every $\alpha\in\Cq^\times$ and $k\geq 1$ follows the relation \mbox{$\ores(\tau(g),\omega,k)=\ores(g,\omega,k)$} for every $\omega\in\orbitq$ and $k\geq 1$, and therefore every $\ores(f,\omega,k)=0$. Moreover, applying $\tau$ to both sides of the ``zeta expansion'' \eqref{eq:zetaExp-q} for $g(z)$, we find that similarly $c^\mathcal{R}_0(\tau(g))=c^\mathcal{R}_0(g)$, and therefore $\pano^\mathcal{R}(f,0)=0$. Finally, we see that \begin{multline*}\pano^\mathcal{R}(\tau(g),1) =\sum_{\omega\in\orbitq}\sum_{n\in\mathbb{Z}}\sum_{k\geq 1}n\cdot c_k\bigl(g,s^{n+1}\alpha_\omega\bigr) \\
    =\pano^\mathcal{R}(g,1)-\sum_{\omega\in\orbitq}\sum_{n\in\mathbb{Z}}\sum_{k\geq 1}c_k(g,s^n\alpha_\omega)=\pano^\mathcal{R}(g,1),\end{multline*} by the Residue Theorem \cite[Thm.~2.3.3]{RigidAnalyticGeometry} applied to the differential form $g(z)\frac{dz}{z}$ over an appropriate fundamental domain in $\Cq^\times$. Thus $\pano^\mathcal{R}(f,1)=0$.    

    To prove the converse, let us now suppose that every orbital residue and both panorbital residues of $f(z)$ are zero. To simplify notation, let us define \begin{equation}\label{eq:phiq}\varphi^\mathcal{R}_{\omega,n,k}(z):=\sum_{i=1}^k\binom{k-1}{i-1}\zqd{i-1}\bigl(s^{-n}\alpha_\omega^{-1}z\bigr) - \zq\bigl(s^{-n}z\bigr)\in\bkq\end{equation} for $\omega\in\orbitq$, $n\in\mathbb{Z}$, and $k\geq 1$. Let us consider 
    \begin{equation}\tilde{f}(z):=f(z)+\sum_{\omega\in\orbitq}\sum_{n\in\Z}\sum_{k\geq 1}c_k(f,s^{n}\alpha_\omega)\cdot\bigl(\varphi^\mathcal{R}_{\omega,n,k}(s^nz)-\varphi^\mathcal{R}_{\omega,n,k}(z)\bigr).\label{eq:ftildeq-def}
    \end{equation}
    Since $\varphi\bigl( s^nz\bigr)-\varphi(z)$ is summable for any $\varphi(z)\in\bkq$ and $n\in\mathbb{Z}$, $f(z)$ is summable if and only if $\tilde{f}(z)\in\bkq$ is summable. Moreover, since each $\varphi_{\omega,n,k}^{\mathcal{R}}\bigl(s^nz\bigr) = \varphi_{\omega,0,k}^{\mathcal{R}}(z)$, writing $f(z)$ as in \eqref{eq:zetaExp-q} yields 
    \begin{align}
    \tilde{f}(z)&=0+\sum_{\omega\in\orbitq}\sum_{k\geq 1}\bigg(\ores(f,\omega,k)\cdot\varphi_{\omega,0,k}^\mathcal{R}(z)  +\sum_{n\in\mathbb{Z}}c_k\bigl(f,s^n\alpha_\omega\bigr)\cdot\zq\bigl(s^{-n}z\bigr)\bigg) \notag\\
    &=0+\sum_{\omega\in\orbitq}\sum_{n\in\mathbb{Z}}\sum_{k\geq 1}c_k\bigl(f,s^n\alpha_\omega\bigr)\cdot\zq\bigl(s^{-n}z\big),\label{eq:ftildeq-short}\end{align} where the leftmost $0$'s in \eqref{eq:ftildeq-short} arise from our assumptions that $c_0^\mathcal{R}(f)$ and the $\ores(f,\omega,k)=0$ all vanish.
    
    Next let us again simplify notation by defining \begin{gather*}\tilde{c}(n):=c_1\bigl(\tilde{f},s^n\bigr)=\sum_{\omega\in\orbitq}\sum_{k\geq 1}c_k\bigl(f,s^n\alpha_\omega\bigr)\in\Cq,
      \end{gather*} so that $\tilde{f}(z)=\sum_{n\in\mathbb{Z}}\tilde{c}(n)\zq\bigl(s^{-n}z\bigr)$ in \eqref{eq:ftildeq-short} and $\sum_{n\in\mathbb{Z}}n\tilde{c}(n)=\pano^\mathcal{R}(f,1)$. Let us also define the auxiliary $q$-periodic functions
    \begin{equation}\label{eq:psiq-def}
        \psi_j(z):=
        \begin{cases}\zq\bigl(s^{-j}z\bigr)-\zq\bigl(s^{-1}z\bigr)\in\bkq & \text{for} \ j\geq 2;\\
            \zq\bigl(s^{-j}z\bigr)-\zq\bigl(z\bigr)\in\bkq & \text{for} \ j\leq -1.\end{cases}
    \end{equation} We now consider the further reduction of $f(z)$ by summable elements of $\bkq$: 
      \begin{equation}
        \bar{f}(z):=\tilde{f}(z)+\sum_{n\leq -1}\sum_{j=n}^{-1}\tilde{c}(n)\cdot\bigl(\psi_j(s^{-1}z)-\psi_j(z)\bigr)+\sum_{n\geq 2}\sum_{j=2}^n\tilde{c}(n)\cdot\bigl(\psi_j(sz)-\psi_j(z)\bigr).\label{eq:fbarq-def}
    \end{equation}
    Once again we see that $f(z)$ is summable if and only if $
    \bar{f}(z)$ is summable, and like before we find that \begin{equation}\label{eq:psiq-comp1}\sum_{j=2}^n\bigl(\psi_j(sz)-\psi_j(z)\bigr)=-\zq\bigl(s^{-n}z\bigr)+n\zq\bigl(s^{-1}z\bigr)-(n-1)\zq\bigl(z\bigr)\end{equation} for each $n\geq 2$, and again similarly that \begin{equation}\label{eq:psiq-comp2}\sum_{j=n}^{-1}\bigl(\psi_j(s^{-1}z)-\psi_j(z)\bigr)=-\zq\bigl(s^{-n}z\bigr)+n\zq\bigl(s^{-1}z\bigr)-(n-1)\zq\bigl(z\bigr)\end{equation} for each $n\leq -1$. It follows that $\bar{f}(z)$ is holomorphic on $\Cq^\times$ away from $q^\Z$ and $sq^\Z$.
    Thus, writing the $\tilde{f}(z)$ as in \eqref{eq:ftildeq-short} in \eqref{eq:fbarq-def}, the computations \eqref{eq:psiq-comp1} and \eqref{eq:psiq-comp2} yield that \vspace{-.07in} \begin{multline*}
        \bar{f}(z)=\tilde{c}(0)\zq(z)+{\overbrace{\tilde{c}(1)\zq(z)-\tilde{c}(1)\zq(z)}^0}+\tilde{c}(1)\zq\bigl(s^{-1}z\bigr)\\ + \sum_{n\in\mathbb{Z}-\{0,1\}}\biggl(n\tilde{c}(n)\Bigl(\zq\bigl(s^{-1}z\bigr)-\zq(z)\Bigr)+\tilde{c}(n)\zq(z)\biggr)\\
        =\pano^\mathcal{R}(f,1)\cdot\Bigl(\zq\bigl(s^{-1}z\bigr)-\zq(z)\Bigr)+\left(\sum_{\omega\in\orbitq}\sum_{k\geq 1}\ores(f,\omega,k)\right)\cdot\zq(z)=0.
    \end{multline*}
    This concludes the proof that $f(z)$ is summable.
\end{proof}

\begin{remark}\label{rem:reduced-form-q}
    Whether or not any orbital or panorbital residues of $f(z)\in\bkq$ vanish, defining $\tilde{f}(z)$ in terms of $f(z)$ as in \eqref{eq:ftildeq-def}, and subsequently $\bar{f}(z)$ in terms of $\tilde{f}(z)$ as in \eqref{eq:fbarq-def}, results in the \emph{reduced form} for $f(z)$ (cf.~\cite[Lem.~B.14]{Dreyfus2018}):
    \begin{multline*}\bar{f}(z)=\pano^\mathcal{R}(f,0)+\pano^\mathcal{R}(f,1)\cdot\bigl(\zq(z-s)-\zq(z)\bigr)\\+\sum_{\omega\in\orbitq}\sum_{k\geq 1}\ores(f,\omega,k)\cdot\varphi^\mathcal{R}_{\omega,0,k},\end{multline*} relative to a choice of $\tau$-pinning $\mathcal{R}$ as in Definition~\ref{def:pinningq} and with the $\varphi^\mathcal{R}_{\omega,0,k}\in\bkq$ defined as in \eqref{eq:phiq} (cf.~Remark~\ref{rem:reduced-form-l}), and such that $\bar{f}(z)-f(z)$ is summable.
\end{remark}

\begin{remark} \label{rem:pano-pinning-q} The definitions of $\pano^\mathcal{R}(f,1)$ and $\pano^\mathcal{R}(f,0)$ are direct analogues of those in Definition~\ref{def:pano-l} in the Weierstrass case. For any other choice $\mathcal{R}'=\{\alpha_\omega' \ | \ \omega\in\orbitq\}$ of $\tau$-pinning, there exist unique $m_\omega,n_\omega\in\mathbb{Z}$ such that $\alpha_\omega'=q^{m_\omega}s^{n_\omega}\alpha_\omega$ for each $\omega\in\orbitq$. We see immediately from the definition \eqref{eq:pano-q-def} that \begin{equation}\label{eq:pinning-comparison-q1} \pano^{\mathcal{R}'}(f,1)=\pano^\mathcal{R}(f,1)-\sum_{\omega\in\orbitq}\sum_{k\geq 1}n_\omega\ores(f,\omega,k).\end{equation}
   It similarly follows from the definition \eqref{eq:pano-q-def} that \begin{equation}\label{eq:pinning-comparison-q0} \pano^{\mathcal{R}'}(f,0)=\pano^\mathcal{R}(f,0)- \sum_{\omega\in\orbitq} \sum_{k\geq 1}m_\omega\ores(f,\omega,k).\end{equation}
    Thus in particular the panorbital residues of $f(z)\in\bkq$ are independent of the choice of $\tau$-pinning of $\ecq$ whenever all the orbital residues of $f(z)$ vanish.
\end{remark}

\subsection{Basic examples} \label{subsec:basic-examples-q}

To conclude this section, we illustrate the above general results with some basic examples. We choose once and for all an arbitrary $\tau$-pinning $\mathcal{R}=\{\alpha_\omega\}_{\omega\in\orbitq}$ of $\ecq$ as in Definition~\ref{def:pinningq}.

\begin{example}\label{ex:zqd-q} Consider first the $\zqd{k-1}(z)\in\bkq$ for $k\geq 2$ as in Definition~\ref{def:q-zeta-fake-derivatives}. It is easy to see directly that none of these elements is summable. We saw in the proof of Proposition~\ref{prop:zetaExp-q} that \[c_\ell\bigl(\zqd{k-1},1\bigr)=(-1)^{k-\ell}\binom{k-1}{\ell-1}=\ores\bigl(\zqd{k-1},\Z\check{s},\ell\bigr)\neq 0\] for $1\leq \ell\leq k$, already confirming the conclusion of Theorem~\ref{thm:mainq}. In particular, since $\wp_q(z)=\zqd{1}(z)$ and $\wp_q'(z)=\zqd{2}(z)$ as in \eqref{eq:q-wp-def}, we obtain \begin{gather*}\ores(\wp_q,\Z\check{s},1)=-1;\qquad \ores(\wp_q,\Z\check{s},2)=1;\\
\ores(\wp_q',\Z\check{s},1)=1;\qquad \ores(\wp_q',\Z\check{s},2)=-2;\qquad \ores(\wp_q',\Z\check{s},3)=1;\end{gather*} (cf.~Example~\ref{ex:wp-l}). Moreover, \[\pano^\mathcal{R}\bigl(\zqd{k-1},1)=\sum_{\ell= 1}^kn(-1)^{k-\ell}\binom{k-1}{\ell-1}=0\] for every $k\geq 2$, regardless of the value of $n\in\Z$ such that $\check{\alpha}_{\Z \check{s}}=\ominus n\check{s}\in\ecq$. The expansions \eqref{eq:zetaExp-q} in this case collapse:
 \begin{multline*}
    \zqd{k-1}(z)=c^\mathcal{R}_0\bigl(\zqd{k-1}\bigr)+\sum_{\ell= 1}^k(-1)^{k-\ell}\binom{k-1}{\ell-1}\sum_{i=1}^\ell\binom{\ell-1}{i-1}\zqd{i-1}\bigl(s^{-n}\alpha_{\Z\check{s}}^{-1}z\bigr)\\=\pano^\mathcal{R}\bigl(\zqd{k-1},0\bigr)+\zqd{k-1}\bigl(s^{-n}\alpha_{\Z\check{s}}^{-1}z\bigr)
\end{multline*} (cf.~the proof of Proposition~\ref{prop:zetaExp-q}), and therefore $\pano^\mathcal{R}\bigl(\zqd{k-1},0)=0$, since $s^{n}\alpha_{\Z \check{s}}\in q^\Z$ and $\zqd{k-1}(z)$ is $q$-periodic for $k\geq 2$. In particular and analogously as in Example~\ref{ex:wp-l}, we find that the non-summability of $\wp_q(z)$ and $\wp_q'(z)$ is witnessed by their orbital residues, but not by their panorbital residues.\end{example}

 \begin{example} \label{ex:zeta-difference-q} Consider $f_t(z):=\zq(t^{-1}z)-\zq(z)\in\bkq$ for $t\in\Cq^\times$ (see Lemma~\ref{lem:q-zeta-difference}). It is not difficult to see directly that $f_t(z)$ cannot be summable unless it is $0$. Let us further simplify notation by writing $\alpha_t,\alpha_1\in\Cq$ for the representatives chosen in $\mathcal{R}$ for the orbits $\omega_t:=\check{t}\oplus\Z\check{s}$ and $\omega_1:=\check{1}\oplus\Z \check{s}$ in $\ecq$. Then there exist unique $m_t,m_1\in\Z$ and $n_t,n_1\in\Z$ such that $t=\alpha_tq^{m_t}s^{n_t}$ and $1=\alpha_1q^{m_1}s^{n_1}$. We compute directly that $\pano^\mathcal{R}(f_t,1)=n_t-n_1$. The expansion \eqref{eq:zetaExp-l} in this case is \[f_t(z)=\pano^\mathcal{R}(f_t,0)+\zq\bigl(q^{m_t}s^{-n_t}\alpha_t^{-1}z\bigr)-\zq\bigl(a^{m_1}s^{-n_1}\alpha_1^{-1}z\bigr),\] and it follows that $\pano^\mathcal{R}(f_t,0)=m_1-m_t$. Moreover $\ores(f_t,\omega_1,1)=0$ if and only if $t\in q^\Z\cdot s^\Z$ (i.e., if $\check{t}\in\Z\check{s}$); otherwise, $\ores(f_t,\omega_t,1)=1=-\ores(f_t,\omega_1,1)$. But if we have $t\in q^\Z\cdot s^\Z$ and $n_t\neq n_0$, then we have $\pano^\mathcal{R}(f_t,1)\neq 0$. Thus the only way to have all orbital residues and the first-order panorbital residue of $f_t(z)$ vanish is to have $\check{t}=\check{1}\in\ecq$. But in this case $f_t(z)=m_1-m_t\in\Ca$ is summable if and only if it is $0$. This again confirms the conclusion of Theorem~\ref{thm:mainq}. Analogously as in Example~\ref{ex:zeta-difference-l}, the non-summability of $\zq(t^{-1}z)-\zq(z)$, in the most interesting case where $\check{t}\in\Z\check{s}$, is witnessed by its panorbital residues, but not by its orbital residues.\end{example}

 \begin{example}\label{ex:phi-q}
    For $\omega\in\orbita$, $n\in\Z$, and $k\geq 2$, consider the elements \[\varphi^\mathcal{R}_{\omega,n,k}(z)=\sum_{i=2}^k\binom{k-1}{i-1}\zqd{i-1}(s^{-n}\alpha_\omega^{-1}z)+\bigl(\zq(s^{-n}\alpha_\omega^{-1}z)-\zq(s^{-n}z)\bigr)\in\bkq\] as in \eqref{eq:phiq} (the case $k=1$ corresponds to $f_t(s^{-n}z)$ as in Example~\ref{ex:zeta-difference-q} with $t=\alpha_\omega$). Since $\varphi^\mathcal{R}_{\omega,n,k}(z)=\varphi^\mathcal{R}_{\omega,0,k}(s^{-n}z)$ and the orbital and panorbital residues are $\tau$-invariant, let us treat only the case $n=0$, for simplicity. Since the orbital residues are $\Cq$-linear, it follows from Example~\ref{ex:zqd-q}, \eqref{eq:combo-identity}, and Example~\eqref{ex:zeta-difference-q} (with $t=\alpha_\omega$), that $c_1\bigl(\varphi^\mathcal{R}_{\omega,0,k},1\bigr)=\ores\bigl(\varphi^\mathcal{R}_{\omega,0,k},\Z\check{s},1\bigr)=-1$; \[c_\ell\bigl(\varphi^\mathcal{R}_{\omega,0,k},\alpha_\omega\bigr)=\ores\bigl(\varphi^\mathcal{R}_{\omega,n,k},\omega,\ell)=
    \begin{cases}
        1 & \text{if} \ \ell=k;\\
        -1 & \text{if} \ \ell=1 \ \text{and} \ \omega=\Z\check{s};\\
        0 & \text{otherwise};
    \end{cases}\] and every other orbital residue of $\varphi^\mathcal{R}_{\omega,0,k}(z)$ vanishes. Moreover, since the panorbital residues are also $\Ca$-linear, both panorbital residues of $\varphi^\mathcal{R}_{\omega,0,k}(z)$ agree with those of $f_t(z)$ as computed in Example~\ref{ex:zeta-difference-q} with $t=\alpha_\omega$. 
\end{example}

 \begin{example} \label{ex:psi-sum-q}For $n\in\Z$, consider the elements
    \[\Psi_n(z):=-\zq(s^{-n}z)+n\zq(s^{-1}z)-(n-1)\zq(z)\in\bkq.\] Then $\ores(\Psi_n,\Z\check{s},1)=-1+n-(n-1)=0$, and every other orbital residue of $\Psi_n(z)$ also vanishes, since $\Psi_n(z)$ is holomorphic away from $q^\Z\cup sq^\Z\cup s^nq^\Z$ and only has at worst first-order poles at these places. For $\alpha_1=q^{-m_1}s^{-n_1}\in\mathcal{R}$ as in Example~\ref{ex:zeta-difference-q}, so that $s^j=\alpha_1q^{m_1}s^{j+n_1}$, we have \[
    \pano^\mathcal{R}(\Psi_n,1)=(n+n_1)\cdot(-1)+(1+n_1)\cdot n+n_1\cdot(1-n)=0.\] The expansion \eqref{eq:zetaExp-l} in this case is \[\Psi_n(z)=c^\mathcal{R}_0(\Psi_n)+(-1)\zl(q^{m_1}s^{-n}z)+n\zl(q^{m_1}s^{-1}z)+(1-n)\zl(q^{m_1}z),\] whence \[\pano^\mathcal{R}(\Psi_n,0)=c_0^\mathcal{R}(\Psi_n)=m_1-nm_1+(n-1)m_1=0.\] Therefore by Theorem~\ref{thm:mainq} each $\Psi_n(z)$ is summable. This agrees with the computations \eqref{eq:psiq-comp1} and \eqref{eq:psiq-comp2} in case $n\neq0,1$, and $\Psi_0(z)=\Psi_1(z)=0$.
\end{example}

\section{Algebraic setting}\label{sec:algebraic}

\subsection{Reminders and notation}

In this section we denote by $\Ca$ an algebraically closed field, and by $\ec$ an elliptic curve over $\Ca$, i.e., a connected smooth projective algebraic curve of genus $1$ over $\Ca$ with a distinguished $\Ca$-point $\idec\in\ec(\Ca)$, which is the identity element for an abelian group law on $\ec(\Ca)$ that we denote by $\oplus$. The field of rational functions on $\ec$ is denoted by $\bk=\Ca(\ec)$. The group of divisors $\mathrm{Div}(\ec)$ is the free abelian group on the set of $\Ca$-points of $\ec$: \begin{equation}\label{eq:divisor-def}\mathrm{Div}(\ec)=\left\{\sum_{\alpha\in\ec(\Ca)}m_\alpha[\alpha] \ \middle| \ \begin{gathered} m_\alpha\in\Z \ \text{for every} \ \alpha\in\ec(\Ca) \\ \text{and almost every}\ m_\alpha=0\end{gathered}\right\}. \end{equation} For a divisor $D=\sum_\alpha m_\alpha[\alpha]\in\mathrm{Div}(\ec)$, its degree is $\mathrm{deg}(D)=\sum_\alpha m_\alpha$, its support is $\mathrm{supp}(D)=\{\alpha\in\ec(\Ca) \ | \ m_\alpha\neq 0\}$, 
and we say that $D$ is effective if $m_\alpha\geq 0$ for every $\alpha\in\ec(\Ca)$, in which case we write $D\geq 0$. For a rational function $f\in\bk$, its divisor $\mathrm{div}(f)\in\mathrm{Div}(\ec)$ is given by \[\mathrm{div}(f)=\sum_{\alpha\in\ec(\Ca)}\nu_\alpha(f)[\alpha],\] where $\nu_\alpha(f)$ is the valuation of $f$ at $\alpha\in\ec(\Ca)$. For a divisor $D\in\mathrm{Div}(\ec)$, its Riemann-Roch space is the $\Ca$-vector space \[\mathcal{L}(D)=\{f\in\bk \ | \ \mathrm{div}(f)-D\geq 0\}.\] More explicitly, if $D=\sum_\alpha m_\alpha[\alpha]$ then $\mathcal{L}(D)$ consists of those $f\in\bk$ such that $f$ has a zero at $\alpha$ of order at least $m_\alpha$ whenever $m_\alpha>0$ and a pole at $\alpha$ of order at most $-m_\alpha$ whenever $m_\alpha<0$. We shall make repeated use of the following consequence of the Riemann-Roch Theorem \cite[Cor.~II.5.5(c)]{SilvermanIntro}.
\begin{theorem}[Riemann-Roch] \label{thm:rr}
    If $D\in\mathrm{Div}(\ec)$ has $\mathrm{deg}(D)\geq 1$ then $\mathrm{dim}_\Ca\bigl(\mathcal{L}(D)\bigr)=\mathrm{deg}(D)$.
\end{theorem}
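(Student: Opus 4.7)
The plan is to obtain this as a direct specialization of the general Riemann-Roch Theorem for smooth projective curves. For a curve of genus $g$ with canonical divisor $K$ and an arbitrary divisor $D$, the general formula states
\begin{equation*}
\mathrm{dim}_\Ca\mathcal{L}(D) - \mathrm{dim}_\Ca\mathcal{L}(K-D) = \mathrm{deg}(D) - g + 1.
\end{equation*}
I would specialize to $\ec$ by taking $g=1$ and using that the canonical divisor of any genus-one curve has degree $2g-2=0$; concretely, $\ec$ admits a nowhere-vanishing global invariant differential, whose divisor is the zero divisor. The formula then collapses to $\mathrm{dim}_\Ca\mathcal{L}(D) - \mathrm{dim}_\Ca\mathcal{L}(K-D) = \mathrm{deg}(D)$.

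It then suffices to show that $\mathcal{L}(K-D)=0$ under the hypothesis $\mathrm{deg}(D)\geq 1$. I would argue more generally that $\mathcal{L}(E)=0$ for any divisor $E$ with $\mathrm{deg}(E)<0$: a nonzero $f\in\mathcal{L}(E)$ would satisfy $\mathrm{div}(f)-E\geq 0$, and taking degrees gives $0-\mathrm{deg}(E)\geq 0$, contradicting $\mathrm{deg}(E)<0$. Since $\mathrm{deg}(K-D)=-\mathrm{deg}(D)\leq -1$, this forces $\mathcal{L}(K-D)=0$, whence $\mathrm{dim}_\Ca\mathcal{L}(D)=\mathrm{deg}(D)$.

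The main obstacle is really external to this argument: it is the proof of the general Riemann-Roch Theorem itself, which is a substantial result in algebraic geometry that I would cite rather than reprove from scratch. In this respect the paper's own indication that the proof is by citation of \cite[Cor.~II.5.5(c)]{SilvermanIntro} already embodies the strategy; the purpose of the present statement is really just to fix the convenient specialized form in which Riemann-Roch will be repeatedly invoked in the rest of the paper (for instance in computing dimensions of spaces of summable functions).
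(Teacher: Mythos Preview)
Your approach matches the paper's: the statement is recorded without proof as a consequence of the general Riemann-Roch Theorem, cited from \cite[Cor.~II.5.5(c)]{SilvermanIntro}, and your sketch of the specialization (genus $1$, canonical divisor of degree zero, hence $\mathcal{L}(K-D)=0$ once $\deg(D)\geq 1$) is the standard one.

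One small slip: you reproduce the paper's displayed formula $\mathrm{div}(f)-E\geq 0$ for membership in $\mathcal{L}(E)$, but that formula is itself a typo in the paper (its subsequent usage, e.g.\ $\wp\in\mathcal{L}(2[\idec])$ for a function with a double pole at $\idec$, shows the intended convention is the standard $\mathrm{div}(f)+E\geq 0$). With the sign as you wrote it, ``$0-\mathrm{deg}(E)\geq 0$'' says $\mathrm{deg}(E)\leq 0$, which does not contradict $\mathrm{deg}(E)<0$. Under the correct convention $\mathrm{div}(f)+E\geq 0$, the degree inequality becomes $\mathrm{deg}(E)\geq 0$ and your argument goes through verbatim.
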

As in \cite[Prop.~III.3.1]{SilvermanIntro}, one deduces from Theorem~\ref{thm:rr} that there exist elements $\wp\in\mathcal{L}(2[\idec])-\Ca$ and $\wp'\in\mathcal{L}(3[\idec])-\mathcal{L}(2[\idec])$ satisfying the Weierstrass equation \begin{equation}\label{eq:alg-weierstrass-eq}(\wp')^2+a_1\wp\wp'+a_3\wp'=\wp^3+a_2\wp^2+a_4\wp+a_6\end{equation} for some $a_1,a_2,a_3,a_4,a_6\in\Ca$ and such that $\bk=\Ca(\wp,\wp')$ is a separable quadratic extension of its subfield $\Ca(\wp)$. It is well-known that when the characteristic of $\Ca$ is different from $2$ and $3$ then $\wp$ and $\wp'$ can be further chosen such that the corresponding $a_1$, $a_3$, and $a_2$ in \eqref{eq:alg-weierstrass-eq} are all zero.

Relative to a given choice of local uniformizers \begin{equation}\label{eq:uniformizers}\mathcal{U}=\bigl\{u_\alpha \in\bk \ \big| \ \nu_\alpha(u_\alpha)=1 \ \text{for every} \ \alpha\in\ec(\Ca)\bigr\},\end{equation} for any given $f\in\bk$ we write $c_k^\mathcal{U}(f,\alpha)\in\Ca$ for the coefficients occurring in the non-positive part of the formal Laurent series expansion of $f$ in $\Ca((u_\alpha))$:
\begin{equation}\label{eq:laurent-coeffs-a}f=\sum_{k\geq 0}\frac{c_k^\mathcal{U}(f,\alpha)}{u_\alpha^k}+u_\alpha h,\qquad\text{with} \quad h\in\Ca[[u_\alpha]].\end{equation} As before, we define the $c_k^\mathcal{U}(f,\alpha)$ in this way for every $k\in\mathbb{N}$ and every $\alpha\in\ec(\Ca)$, even though all but finitely many of them are zero for each given $\alpha\in\ec(\Ca)$. Note that if $\nu_\alpha(f)\geq 0$ then the constant term $c_0^\mathcal{U}(f,\alpha)=f(\alpha)$ is independent of the choice of $\mathcal{U}$, but otherwise the value of $c_0^\mathcal{U}(f,\alpha)\in\Ca$ does depend on the choice of local uniformizer $u_\alpha\in\bk$.

As in the introduction, we denote by $\tau$ the automorphism of $\bk$ induced by the addition-by-$s$ map $\ec(\Ca)\rightarrow \ec(\Ca):\alpha\mapsto \alpha\oplus s$ for a fixed non-torsion point $s\in\ec(\Ca)$. Equivalently, $\tau$ is the unique $\Ca$-linear field automorphism of $\bk$ such that $\nu_{\alpha}(\tau(f))=\nu_{\alpha\oplus s}(f)$ for every $f\in\bk$ and $\alpha\in\ec(\Ca)$. The explicit formulas describing how $\tau$ transforms elements of $\bk=\Ca(\wp,\wp')$ as in \eqref{eq:alg-weierstrass-eq} are well known \cite[\S III.2]{SilvermanIntro}. The set of $\tau$-\emph{orbits} in $\ec$ is \[\orbita:=\ec/\Z s.\]

The following definitions were first introduced in \cite[Def.~B.6]{Dreyfus2018} and \cite[Def.~B.7]{Dreyfus2018}, respectively (with slightly different terminology).

\begin{definition}[$\tau$-compatible uniformizers] \label{def:uniformizers}
    A set $\mathcal{U}=\{u_\alpha\ | \ \alpha\in\ec(\Ca)\}$ of local uniformizers as in \eqref{eq:uniformizers} is $\tau$-\emph{compatible} if $\tau(u_{\alpha\oplus s})=u_\alpha$ for every $\alpha\in\ec(\Ca)$.
\end{definition}

\begin{definition}[Orbital residues - algebraic version] \label{def:oresa} For a given $\tau$-compatible set of local uniformizers $\mathcal{U}$, the \emph{orbital residue} of $f\in\bk$ at the orbit $\omega\in\orbita$ of order $k\in\mathbb{N}$ relative to $\mathcal{U}$ is \[\ores^\mathcal{U}(f,\omega,k):=\sum_{\alpha\in\omega}c_k^\mathcal{U}(f,\alpha).\]  
\end{definition}

\begin{remark}\label{rem:ores-uniformizers} In principle, it is a relatively simple matter to compute the effect of the choice of $\tau$-compatible set of local uniformizers $\mathcal{U}$ on the orbital residues in Definition~\ref{def:oresa}. Indeed, for another given choice $\mathcal{U}'=\{u_\alpha' \ | \ \alpha\in\ec(\Ca)\}$ of $\tau$-compatible local uniformizers, one can compute explicitly (as many terms as desired of) the unique formal power series \[h_\omega(u)=\sum_{j\geq 0}h_{\omega,j}u^j\in\Ca[[u]]\] with $h_{\omega,0}\neq 0$ such that $u_\alpha=u_\alpha'/h_\omega(u_\alpha')\in\Ca[[u_\alpha']]$ for every $\alpha\in\omega$ simultaneously. Writing $h_{\omega,j}^{(m)}$ for the coefficients occurring in $\bigl(h_{\omega}(u)\bigr)^m=\sum_{j\geq 0}h_{\omega,j}^{(m)}u^j$, a straightforward computation yields that, for each $f\in\bk$, \begin{gather}\label{eq:u-coeff-effect}
    c_k^{\mathcal{U}'}(f,\alpha)=\sum_{m\geq k}h_{\omega,m-k}^{(m)}\cdot c_m^\mathcal{U}(f,\alpha)\qquad\text{for every} \quad\alpha\in\omega\quad \text{and} \quad k\geq 0,
    \intertext{and therefore for each $\omega\in\orbita$ and $k\geq 1$ we have}
    \label{eq:u-ores-effect} \ores^{\mathcal{U}'}(f,\omega,k)=\sum_{m\geq k}h_{\omega,m-k}^{(m)}\cdot\ores^\mathcal{U}(f,\omega,m).
\end{gather}
\end{remark}

\begin{remark}\label{rem:outer-uniformizers}
It is not strictly necessary to restrict ourselves to taking local uniformizers $u_\alpha\in \bk$.

Indeed, in the Weierstrass setting the orbital residues were given in Definition~\ref{def:oresl} relative to the complex analytic uniformizers \mbox{$u_{\check{\alpha}}=(z-\alpha)\notin\bkl$,} for $\alpha\in\C$ any lift of $\check{\alpha}\in\ecl$. A particular choice of lift of each point of $\ecl$ was subtly folded into the choice of Weierstrass pinning $\mathcal{R}$ in Definition~\ref{def:pinningl}, and these choices did not affect the definitions of any of the (pan)orbital residues save for that of the degree $0$ panorbital residue according to Remark~\ref{rem:pano-pinning-l}. The possibility of using such complex analytic uniformizers for elliptic curves over $\C$ relative to a choice of Weierstrass uniformization was already suggested in \cite[Remark~B.10]{Dreyfus2018} and \cite[Remark~A.8(1)]{Hardouin2021}.

Similarly, in the Tate setting the orbital residues were expressed in \mbox{Definition~\ref{def:oresq}} relative to our choice of (complex/rigid) analytic uniformizers $u_{\check{\alpha}}=(1-\alpha^{-1}z)\notin\bkq$, where again the particular choices of lifts $\alpha\in\Cq^\times$ for each $\check{\alpha}\in\ecq$ that were implicit in the choice of pinning $\mathcal{R}$ in \mbox{Definition~\ref{def:pinningq}} affected nothing but the order $0$ panorbital residues according to Remark~\ref{rem:pano-pinning-q}. One could have made other choices, say $u_{\check{\alpha}}=1-\alpha z^{-1}$ or, as suggested in \cite[Remark~A.8(2)]{Hardouin2021}, $u_{\check{\alpha}}=\log(\alpha^{-1}z)$ on some sufficiently small neighborhood of $\alpha\in\Cq^\times$. The formulas \eqref{eq:u-coeff-effect} and \eqref{eq:u-ores-effect} capture the effect of making such alternative choices of uniformizers for the local ring at $\check{\alpha}\in\ecq$.

Defining the local coefficients in \eqref{eq:laurent-coeffs-l} and \eqref{eq:laurent-coeffs-q} in terms of the Laurent series expansions with respect to these more general local uniformizers in the Weierstrass and Tate settings is what allowed us to carry out our constructions so ``concretely'' in terms of the zeta functions $\zl(z)$ and $\zq(z)$ (in spite of their being only quasi-periodic). Another advantage in those settings was that we were able to easily read off the residues of the differential forms $f(z)dz$ in the Weierstrass setting and $f(z)dz/z$ in the Tate setting from their local formal expansions with respect to these non-elliptic uniformizers, which enabled the application of the relevant Residue Theorems at pivotal points in our proofs of Proposition~\ref{prop:zetaExp-l} and Theorem~\ref{thm:mainl} in the Weierstrass setting, and Proposition~\ref{prop:zetaExp-q} and Theorem~\ref{thm:mainq} in the Tate setting. A similar need to appeal to the Cauchy Residue Theorem arises in both \cite[Appendix~B]{Dreyfus2018} and \cite[Appendix~A]{Hardouin2021}, in the latter of which the authors introduce the notion of $n$-coherent uniformizers (available only in characteristic zero, or when $n$ is sufficiently small relative to the characteristic), which have the property that the $c_1^\mathcal{U}(f,\alpha)=\mathrm{res}(f\varpi,\alpha)$ for every $f\in\bk$ such that the valuation $\nu_\alpha(f)\geq -n$, for $\varpi$ a conveniently chosen invariant differential form. We develop an alternative framework in which we are still able to appeal the the algebraic Residue Theorem~\cite[Thm.~III.7.14.2]{Hartshorne}.

There are other useful local uniformizers that one could utilize outside of $\bk$ while still remaining in a purely algebraic setting. For example, one could use local \'etale coordinates as in \cite[\href{https://stacks.math.columbia.edu/tag/054L}{Lemma 054L}]{stacks-project}, or more generally uniformizers in the \'etale local rings at closed points of $\ec$ \cite[\href{https://stacks.math.columbia.edu/tag/0BSK}{Section 0BSK}]{stacks-project}, or even more generally uniformizers in the stalks at closed points relative to other locally-ringed Grothendieck topoi, with the obvious notion of $\tau$-compatibility (under base change with respect to addition by $s$) in Definition~\ref{def:uniformizers}, should such a need or whim ever arise. Although everything that follows remains true relative to this more general notion of $\tau$-compatible set of local uniformizers, we shall restrict ourselves to the more concrete notion in Definition~\ref{def:uniformizers}.
\end{remark}

\subsection{Pinnings and their resulting ancillary data}

In this purely algebraic setting there is no analogue of the Weierstrass zeta-function $\zl(z)$ in \eqref{eq:zeta-wp-defs}, nor of the ``zeta function'' $\zq(z)$ in \eqref{eq:q-zeta-def}, which we respectively utilized throughout \S\ref{sec:weierstrass} and \S\ref{sec:tate} for various purposes. But we have this next analogue of the special $t=s$ cases of Lemma~\ref{lem:l-zeta-difference} and Lemma~\ref{lem:q-zeta-difference}.

\begin{deflem}\label{deflem:a-zeta-difference} For $\mathcal{U}$ a $\tau$-compatible set of local uniformizers as in Definition~\ref{def:uniformizers}, there exists a unique $\za{1}{0} \in\mathcal{L}\bigl([s]+[\idec]\bigr)$ such that
\begin{enumerate}
    \item $c_1^\mathcal{U}\bigl(\za{1}{0},s\bigr)=1$;
    \item $c_1^\mathcal{U}\bigl(\za{1}{0},\idec\bigr)=-1$; and
    \item $c_0^\mathcal{U}\bigl(\za{1}{0},\idec\bigr)=0$.
\end{enumerate}
\end{deflem}

\begin{proof}
    By the Riemann-Roch Theorem~\ref{thm:rr}, any $\zeta\in\mathcal{L}\bigl([s]+[\idec]\bigr)-\Ca$ can be uniquely scaled to satisfy condition (1), and then uniquely modified by an additive constant to satisfy condition (3). The fact that this unique $\zeta$ already satisfies condition (2) follows from \cite[Lem.~B.15]{Dreyfus2018}. We briefly recall the relevant argument here, merely to emphasize that this fact does not rely on the standing assumption made in \cite[Appendix~B]{Dreyfus2018} that the ground field ($\Ca$, in our case) be of characteristic zero. Given a choice of non-zero global section $\varpi\in H^0(\ec,\Omega^1_{\ec/\Ca})$, there is a unique derivation $\delta$ on $\bk$ such that the differentials $du=\delta(u)\varpi$ for every $u\in\bk$, and this $\delta$ commutes with~$\tau$ (cf.~\cite[Prop.~III.5.1]{SilvermanIntro}). For any $\varphi\in\bk$ and any $\alpha\in\ec(\Ca)$ whatsoever, the residue of the differential form $\varphi\varpi$ at $\alpha$ is $\mathrm{res}(\varphi\varpi,\alpha)=c_1^\mathcal{U}\bigl(\varphi/\delta(u_\alpha),\alpha\bigr)$. In our particular situation, we find that the residues of the differential form $\zeta\varpi$ are $\mathrm{res}(\zeta\varpi,s)=1/\delta(u_s)(s)$, $\mathrm{res}(\zeta\varpi,\idec)=c_1^\mathcal{U}(\zeta,\idec)/\delta(u_\idec)(\idec)$, and zero elsewhere, where $0\neq\delta(u_\alpha)(\alpha)$ denotes the evaluation of $\delta(u_\alpha)\in\bk$ at $\alpha\in\ec(\Ca)$. The fact that $\delta(u_s)(s)=\delta(u_\idec)(\idec)$ follows from the facts that $\tau(u_s)=u_\idec$ and $\tau\delta=\delta\tau$. Thus the Residue Theorem \cite[Thm.~III.7.14.2]{Hartshorne} applied to $\zeta\varpi$ implies that $c_1^\mathcal{U}(\zeta,\idec)=-1$.
\end{proof}

\begin{definition}\label{def:a-zeta-differences}
    With $\za{1}{0}\in\bk$ as in Definition~\ref{deflem:a-zeta-difference}, for $\ell,m\in\Z$ we define the elements $\za{\ell}{m}\in\bk$ as follows.
    \[\za{\ell}{m}:=\begin{cases}
        \displaystyle \sum_{i=m}^{\ell-1} \tau^{-i}\bigl(\za{1}{0}\bigr) & \text{if} \ \ell > m; \\
        0 \vphantom{\dfrac{A}{A}}& \text{if} \ \ell=m;\\
        \displaystyle -\sum_{i=\ell}^{m-1} \tau^{-i}\bigl(\za{1}{0}\bigr) & \text{if} \ \ell<m.     
    \end{cases}\]
\end{definition}

\begin{remark}\label{rem:a-zeta-differences} It is clear from Definition~\ref{def:a-zeta-differences} that, for every $\ell,m,n\in\Z$, $\za{\ell}{m}=-\za{m}{\ell}$, $\tau^n\bigl(\za{\ell}{m}\bigr)=\za{\ell-n}{m-n}$, and $\za{\ell}{m}+\za{m}{n}=\za{\ell}{n}$. Moreover, the defining properties of the basic $\za{1}{0}\in\mathcal{L}\bigl([s]+[\idec]\bigr)$ in Definition~\ref{deflem:a-zeta-difference} immediately imply that each $\za{\ell}{m}\in\mathcal{L}\bigl([\ell s]+[ms]\bigr)$ and, if $\ell\neq m$, then $c_1^\mathcal{U}\bigl(\za{\ell}{m},\ell s\bigr)=1$ and $c_1^\mathcal{U}\bigl(\za{\ell}{m},m s\bigr)=-1$. These properties make the $\za{\ell}{m}\in\bk$ into algebraic avatars of the zeta differences $\zl(z-\ell s)-\zl(z-ms)$ in the Weierstrass setting and $\zq(s^{-\ell}z)-\zq(s^{-m}z)$ in the Tate setting.
\end{remark}

The following definition is the algebraic replacement of Definition~\ref{def:pinningl} in the Weierstrass setting and Definition~\ref{def:pinningq} in the Tate setting.

\begin{definition}[$\tau$-pinnings - algebraic version]\label{def:pinninga}
    A $\tau$-\emph{pinning} of $\ec$ is a choice $\mathbf{P}=(\mathcal{R},\mathcal{U})$, consisting of a set $\mathcal{R}=\{\alpha_\omega\in\ec(\Cq) \ | \ \omega\in\orbita\}$ of representatives $\alpha_\omega\in\omega$ for each orbit $\omega\in\orbita$, and a set of $\tau$-compatible local uniformizers $\mathcal{U}=\{u_\alpha \ | \ \alpha\in\ec(\Ca)\}$ as in Definition~\ref{def:uniformizers}.
\end{definition}

The choice of $\tau$-pinning $\mathbf{P}=(\mathcal{R},\mathcal{U})$ of $\ec$ as in Definition~\ref{def:pinninga} gives rise to the ancillary data described in the following result, obtained as a straightforward consequence of the Riemann-Roch Theorem~\ref{thm:rr}.

\begin{deflem} \label{deflem:phia}Let $\mathbf{P}=(\mathcal{R},\mathcal{U})$ be any $\tau$-pinning of $\ec$ as in Definition~\ref{def:pinninga}, with $\mathcal{R}=\{\alpha_\omega\}_{ \omega\in\orbita}$. 
\begin{enumerate}
    
    \item Given $\omega\in\orbita$ such that $\alpha_\omega\neq\idec$, $n\in\Z$, and $k\geq 1$, there exists a unique $\varphi^\mathbf{P}_{\omega,n,k}\in\mathcal{L}\bigl(k[\alpha_\omega\oplus ns]+[ns]\bigr)$ such that \begin{enumerate}
    \item $c_k^\mathcal{U}\bigl(\varphi^\mathbf{P}_{\omega,n,k},\alpha_\omega\oplus ns\bigr)=1$; 
    \item $c_j^\mathcal{U}\bigl(\varphi^\mathbf{P}_{\omega,n,k},\alpha_\omega\oplus ns\bigr)=0$ for $j\in\mathbb{N}-\{k\}$; and 
    \item $c_0^\mathcal{U}\bigl(\varphi^\mathbf{P}_{\omega,n,k},ns\bigr)=0$.
    \end{enumerate}
    \item Given $n\in\Z$ and $k\geq 2$, there exists a unique $\hat{\varphi}^\mathcal{U}_{\Z s,n,k}\in\mathcal{L}\bigl(k[ns]\bigr)$ such that \begin{enumerate}
    \item $c_k^\mathcal{U}\bigl(\hat{\varphi}^\mathcal{U}_{\Z s,n,k},ns\bigr)=1$; and
        \item $c_j^\mathcal{U}\bigl(\hat{\varphi}^\mathcal{U}_{\Z s,n,k},ns\bigr)=0$ for every $j\in\Z_{\geq 0}-\{1,k\}$.
    \end{enumerate}
\end{enumerate}
In case $\alpha_{\Z s}=\idec$, we write $\varphi^\mathbf{P}_{\Z s,n,1}:=0$ and $\varphi^{\mathbf{P}}_{\Z s,n,k}:=\hat{\varphi}^{\mathcal{U}}_{\Z s,n,k}$ for every $n\in\Z$ and $k\geq 2$.
\end{deflem}

\begin{proof}
   (2).~Any non-constant element $\varphi\in\mathcal{L}(2[ns])$ can be uniquely scaled to satisfy condition (2a), and then further uniquely modified by an additive constant to satisfy condition (2b). The claim for $k>2$ follows by induction: any element $\varphi\in\mathcal{L}\bigl(k[ns]\bigr)-\mathcal{L}\bigl((k-1)[ns]\bigr)$ can be uniquely scaled to satisfy condition (2a), then uniquely modified by an additive constant to satisfy $c_0^\mathcal{U}(\varphi,ns)=0$, and then uniquely modified by a $\Ca$-linear combination of $\hat{\varphi}^\mathcal{U}_{\Z s,n,j}$ for $2\leq j \leq k-1$ to satisfy the rest of condition (2b).

    (1).~Any non-constant element $\varphi\in\mathcal{L}([\alpha_\omega\oplus ns]+[ns])$ can be uniquely scaled to satisfy condition (1a), and then further uniquely modified by an additive constant to satisfy condition (1c) -- condition (1b) is vacuous in this base case. The claim for $k>1$ follows by induction: any element $\varphi\in\mathcal{L}\bigl(k[\alpha_\omega \oplus ns]+[ns]\bigr)-\mathcal{L}\bigl((k-1)[\alpha_\omega\oplus ns]+[ns]\bigr)$ can be uniquely scaled to satisfy condition (1a), then uniquely modified by an additive constant to satisfy condition (1c), and then uniquely modified by a $\Ca$-linear combination of $\varphi^\mathbf{P}_{\omega,n,j}$ for $1\leq j \leq k-1$ to satisfy condition (1b).
\end{proof}

\begin{remark}\label{rem:phia}
    The uniqueness statements in Definition~\ref{deflem:phia}, together with the fact that $c_k^\mathcal{U}\bigl(\tau^n(\varphi),\alpha\bigr)=c_k^\mathcal{U}(\varphi,\alpha\oplus ns\bigr)$ for every $\varphi\in\bk$, $\alpha\in\ec(\Ca)$, and $k\geq 0$, immediately imply that $\tau^n\bigl(\varphi^\mathbf{P}_{\omega,n,k}\bigr)=\varphi^\mathbf{P}_{\omega,0,k}$ for every $\omega\in\orbita$, $n\in \Z$, and $k\geq 1$.
\end{remark}

The ancillary elements of $\bk$, which arise uniquely from the choice of \mbox{$\tau$-pinning} $\mathbf{P}=(\mathcal{R},\mathcal{U})$ of $\mathcal{E}$ according to Lemma~\ref{deflem:phia}, in turn give rise to certain structural constants associated with $\mathbf{P}$. The next result shows that these constants in fact depend only on the choice of $\tau$-compatible local uniformizers $\mathcal{U}$, and not on the choice $\mathcal{R}$ of $\tau$-orbit representatives.

\begin{lemma}\label{lem:d-r-independence} Let $\mathcal{U}=\{u_\alpha\}_{\alpha\in\ec(\Ca)}$ be a $\tau$-compatible set of local uniformizers and suppose that $\varphi\in\mathcal{L}\bigl(k[\alpha]+[ns]\bigr)$ for some $n\in\Z$, $k\geq 1$, and $ns\neq\alpha\in\ec(\Ca)$ such that \begin{equation}\label{eq:d-r-independence}c_j^\mathcal{U}(\varphi,\alpha)=\begin{cases} 1 & \text{for} \ j=k; \ \text{and} \\0 & \text{for} \ j\in\mathbb{N}-\{ k\}.\end{cases}\end{equation} Then $d^\mathcal{U}_{\omega,k}:=-c_1^\mathcal{U}(\varphi,ns)$ depends only on the orbit $\omega:=\alpha\oplus \Z s$ and the degree~$k$, and not on the particular choices of $\alpha\in\omega$, $n\in\Z$, nor $\varphi\in\mathcal{L}\bigl(k[\alpha]+[ns]\bigr)$ satisfying \eqref{eq:d-r-independence}. Moreover, $d^\mathcal{U}_{\Z s,1}=1$, and $c_1^\mathcal{U}\bigl(\hat{\varphi}^\mathcal{U}_{\Z s,n,k},ns\bigr)=-d^\mathcal{U}_{\Z s, k}$ for every $n\in\Z$ and $k\geq 2$, where $\hat{\varphi}^\mathcal{U}_{\Z s, n, k}$ is given as in Definition~\ref{deflem:phia}(2).
\end{lemma}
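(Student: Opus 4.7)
The plan is to split the lemma into three tasks. First, I would show that the function $\varphi$ is in fact \emph{uniquely} determined by $(\alpha, n, k, \mathcal{U})$, so the ``choice of $\varphi$'' clause in the statement is vacuous. Second, I would establish independence of $-c_1^\mathcal{U}(\varphi, ns)$ from the choice of $\alpha \in \omega$ and from the choice of $n \in \Z$. Third, I would carry out the two explicit computations: $d^\mathcal{U}_{\Z s, 1} = 1$ and $c_1^\mathcal{U}(\hat{\varphi}^\mathcal{U}_{\Z s, n, k}, ns) = -d^\mathcal{U}_{\Z s, k}$. The only tools needed are the Riemann-Roch Theorem~\ref{thm:rr}, the elementary zeta-differences $\za{\ell}{m}$ from Remark~\ref{rem:a-zeta-differences}, and the $\tau$-equivariance of Laurent coefficients afforded by the $\tau$-compatibility of $\mathcal{U}$.

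For uniqueness, given $\varphi_1, \varphi_2 \in \mathcal{L}(k[\alpha] + [ns])$ both satisfying \eqref{eq:d-r-independence}, the difference $\varphi_1 - \varphi_2$ has vanishing Laurent coefficients of orders $0$ through $k$ at $\alpha$, so it lies in $\mathcal{L}([ns] - [\alpha])$. Since $ns \neq \alpha$, the degree-zero divisor $[ns] - [\alpha]$ is non-principal on $\ec$, and Theorem~\ref{thm:rr} forces $\varphi_1 = \varphi_2$. For $n$-independence, holding $\alpha \in \omega$ fixed and comparing $\varphi_1$ (built from $n_1$) to $\varphi_2$ (built from $n_2$), I would form $\psi := \varphi_2 + c_1^\mathcal{U}(\varphi_2, n_2 s) \cdot \za{n_1}{n_2} - C$, choosing $C \in \Ca$ so that $c_0^\mathcal{U}(\psi, \alpha) = 0$. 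Because $\za{n_1}{n_2}$ is regular at $\alpha$ (which equals neither $n_1 s$ nor $n_2 s$ by hypothesis), this modification preserves the vanishing of every $c_j^\mathcal{U}(\psi, \alpha)$ for $j \geq 1$ together with $c_k^\mathcal{U}(\psi, \alpha) = 1$. Moreover the coefficient of $\za{n_1}{n_2}$ was chosen precisely to cancel the simple pole of $\varphi_2$ at $n_2 s$, so $\psi \in \mathcal{L}(k[\alpha] + [n_1 s])$ satisfies \eqref{eq:d-r-independence} for $(\alpha, n_1)$; uniqueness then forces $\psi = \varphi_1$, and reading off first residues at $n_1 s$ produces $c_1^\mathcal{U}(\varphi_1, n_1 s) = c_1^\mathcal{U}(\varphi_2, n_2 s)$ via $c_1^\mathcal{U}(\za{n_1}{n_2}, n_1 s) = 1$.

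Independence in $\alpha$ reduces to $n$-independence via $\tau^m$: if $\alpha_2 = \alpha_1 \oplus m s$, then the $\tau$-compatibility of $\mathcal{U}$ yields $c_j^\mathcal{U}(\tau^m(\varphi_2), \alpha_1) = c_j^\mathcal{U}(\varphi_2, \alpha_2)$, so $\tau^m(\varphi_2)$ satisfies \eqref{eq:d-r-independence} for $(\alpha_1, n_2 - m)$ and $c_1^\mathcal{U}(\tau^m(\varphi_2), (n_2 - m) s) = c_1^\mathcal{U}(\varphi_2, n_2 s)$. The base case $d^\mathcal{U}_{\Z s, 1} = 1$ is immediate on taking $\alpha = \idec$, $n = 1$, $k = 1$: the function $\varphi = -\za{1}{0}$ satisfies \eqref{eq:d-r-independence} by the defining properties in Definition~\ref{deflem:a-zeta-difference}, and $c_1^\mathcal{U}(-\za{1}{0}, s) = -1$. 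For the last assertion, pick any $m \neq n$ and apply the same additive modification to $\hat{\varphi}^\mathcal{U}_{\Z s, n, k}$: the function $\hat{\varphi}^\mathcal{U}_{\Z s, n, k} + c_1^\mathcal{U}(\hat{\varphi}^\mathcal{U}_{\Z s, n, k}, ns) \cdot \za{m}{n}$, corrected by an additive constant to kill $c_0^\mathcal{U}(\cdot, ns)$, satisfies \eqref{eq:d-r-independence} for $(\alpha = ns, n = m)$; uniqueness identifies it with the lemma's $\varphi$ for those parameters, and reading $c_1^\mathcal{U}$ at $ms$ yields $-d^\mathcal{U}_{\Z s, k} = c_1^\mathcal{U}(\hat{\varphi}^\mathcal{U}_{\Z s, n, k}, ns)$ via $c_1^\mathcal{U}(\za{m}{n}, ms) = 1$.

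The main obstacle is mostly bookkeeping: each additive correction by a multiple of some $\za{\ell}{m}$ or by a constant must preserve a delicate cluster of conditions (the exact principal part at $\alpha$ of order $k$, the vanishing of $c_0^\mathcal{U}(\cdot, \alpha)$, and the simple-pole structure at the designated pole of order one). Once uniqueness is secured by Riemann-Roch, however, each of these checks is a one-line verification drawn from the explicit residue data for $\za{\ell}{m}$ collected in Remark~\ref{rem:a-zeta-differences}.
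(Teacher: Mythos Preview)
Your proposal is correct and follows essentially the same route as the paper's own proof: both establish uniqueness via Riemann--Roch, then shuffle the auxiliary pole using an appropriate $\za{\ell}{m}$ and transport along the orbit via $\tau^m$, and both handle the two explicit computations by specializing to $-\za{1}{0}$ and by modifying $\hat{\varphi}^\mathcal{U}_{\Z s,n,k}$ with a $\za{\cdot}{\cdot}$-term. Your exact-uniqueness argument (via $\mathcal{L}([ns]-[\alpha])=0$) is slightly sharper than the paper's ``up to an irrelevant additive constant,'' but this difference is cosmetic since the paper simply does not bother tracking the $c_0$-normalization that you enforce with your constant $C$.
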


\begin{proof}
    As we saw in the proof of Definition~\ref{deflem:phia}(1), the Riemann-Roch Theorem~\ref{thm:rr} implies that conditions~\ref{eq:d-r-independence} uniquely determine $\varphi$ up to an irrelevant additive constant. Given another choice of $\alpha'=\alpha\oplus ms\in\omega$ and $n'\in\Z$, we need to show that if $\varphi'\in\mathcal{L}\bigl(k[\alpha']+[n's]\bigr)$ has $c_j^\mathcal{U}(\varphi',\alpha')$ satisfying the conditions in the right-hand side of \eqref{eq:d-r-independence} then $c_1^\mathcal{U}(\varphi',n's)=c_1^\mathcal{U}(\varphi,ns)$. Now note that
    $\varphi'':=\tau^m\bigl(\varphi'\bigr)\in\mathcal{L}\bigl(k[\alpha]+[(n'-m)s]\bigr)$
    still has $c_j^\mathcal{U}\bigl(\varphi'',\alpha\bigr)$ satisfying \eqref{eq:d-r-independence}, and $c_1^\mathcal{U}(\varphi'',n''s)=c_1^\mathcal{U}(\varphi',n's)$ with $n'':=n'-m$. Next note that
    $\varphi''':=\varphi''+c_1^\mathcal{U}(\varphi'',n''s)\cdot\za{n}{n''}\in\mathcal{L}\bigl(k[\alpha]+[ns]\bigr)$
    still has $c_j^\mathcal{U}(\varphi''',\alpha)$ satisfying \eqref{eq:d-r-independence} by Remark~\ref{rem:a-zeta-differences}, whence $c_1^\mathcal{U}(\varphi,ns)=c_1^\mathcal{U}(\varphi''',ns)=c_1^\mathcal{U}(\varphi',n's)$.
    
    The fact that $d_{\Z s,1}^{\mathcal{U}}=1$ follows from the observation that if $k=1$ and $\alpha=\ell s$ for some $n\neq \ell\in\Z$ then $\varphi-\za{\ell}{n}\in\mathcal{L}([ns])=\Ca$. Finally, for $k\geq 2$ we have that
    $\varphi:=\hat{\varphi}^\mathcal{U}_{\Z s, n, k}+c_1\bigl(\hat{\varphi}^\mathcal{U}_{\Z s,n,k},ns\bigr)\cdot\za{\ell}{n}\in\mathcal{L}\bigl(k[n s]+[\ell s]\bigr)$
    has $c_j(\varphi, \alpha)$ satisfying \eqref{eq:d-r-independence} with $\alpha= ns$ and $-d^\mathcal{U}_{\Z s,k}=c_1(\varphi, \ell s)=c_1\bigl(\hat{\varphi}^\mathcal{U}_{\Z s,n,k},ns\bigr)$. \end{proof}

    \begin{corollary}\label{cor:d-r-independence}
        With notation as in Lemma~\ref{lem:d-r-independence}, $d^\mathcal{U}_{\omega,1}\neq 0$ for every $\omega\in\orbita$.
    \end{corollary}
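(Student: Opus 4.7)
The plan is to split into two cases based on whether $\omega=\Z s$ or not. The case $\omega = \Z s$ requires no new work, since Lemma~\ref{lem:d-r-independence} already records $d^\mathcal{U}_{\Z s,1}=1$. So the whole content of the corollary lies in the case $\omega\neq\Z s$, which I would dispatch by a short contradiction argument powered by the Riemann-Roch Theorem~\ref{thm:rr}.

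Concretely, fix any representative $\alpha\in\omega$ and any $n\in\Z$; because $\omega\neq \Z s$, the point $\alpha$ lies outside $\Z s$, so the hypothesis $\alpha\neq ns$ of Lemma~\ref{lem:d-r-independence} is automatic and no compatibility constraint on $n$ is needed. Suppose for contradiction that $d^\mathcal{U}_{\omega,1}=0$. By the $k=1$ case of Definition-Lemma~\ref{deflem:phia}(1) there exists a unique $\varphi:=\varphi^{\mathbf{P}}_{\omega,n,1}\in\mathcal{L}([\alpha]+[ns])$ with $c_1^\mathcal{U}(\varphi,\alpha)=1$ and $c_0^\mathcal{U}(\varphi,ns)=0$, and by the defining relation $d^\mathcal{U}_{\omega,1}=-c_1^\mathcal{U}(\varphi,ns)$ of Lemma~\ref{lem:d-r-independence} our assumption forces $c_1^\mathcal{U}(\varphi,ns)=0$ as well. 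Since $\varphi\in\mathcal{L}([\alpha]+[ns])$ permits at worst a simple pole at $ns$, the vanishing of $c_1^\mathcal{U}(\varphi,ns)$ upgrades this to $\varphi$ being regular at $ns$, i.e. $\varphi\in\mathcal{L}([\alpha])$.

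Now apply Theorem~\ref{thm:rr} to $D=[\alpha]$ (which has $\deg(D)=1$): we get $\dim_\Ca\mathcal{L}([\alpha])=1$. Since $\Ca\subseteq\mathcal{L}([\alpha])$ already spans a one-dimensional subspace, this forces $\mathcal{L}([\alpha])=\Ca$, and hence $\varphi$ is a constant. But then $c_1^\mathcal{U}(\varphi,\alpha)=0$, contradicting $c_1^\mathcal{U}(\varphi,\alpha)=1$. This is the entire argument; there is no real obstacle, as the corollary is essentially a restatement of the classical fact that an elliptic curve carries no rational function with a single simple pole. The only thing I would be careful about is to explicitly justify why the case $\alpha=ns$ does not arise when $\omega\neq\Z s$, so that the constant $d^\mathcal{U}_{\omega,1}$ is well-defined via Lemma~\ref{lem:d-r-independence} to begin with.
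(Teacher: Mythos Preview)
Your proposal is correct and follows essentially the same approach as the paper's proof: assume $d^\mathcal{U}_{\omega,1}=0$, deduce that the relevant $\varphi$ lies in $\mathcal{L}([\alpha])=\Ca$ by the Riemann-Roch Theorem~\ref{thm:rr}, and reach a contradiction with $c_1^\mathcal{U}(\varphi,\alpha)=1$. The paper's version is slightly more streamlined in that it invokes the general $\varphi$ of Lemma~\ref{lem:d-r-independence} directly (rather than the specific $\varphi^{\mathbf{P}}_{\omega,n,1}$, which is tied to the fixed representative $\alpha_\omega$ and not an arbitrary $\alpha\in\omega$), but the substance is identical.
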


    \begin{proof}
        If $\varphi\in\mathcal{L}([\alpha]+[ns])$ satisfies \eqref{eq:d-r-independence} then $-d^\mathcal{U}_{\omega,1}=c^\mathcal{U}_1(\varphi,ns)\neq 0$, for otherwise we would have $\varphi\in\mathcal{L}([\alpha])=\Ca$ by the Riemann-Roch Theorem~\ref{thm:rr}, which would imply that also $c_1^\mathcal{U}(\varphi,\alpha)=0$, contradicting \eqref{eq:d-r-independence}.
    \end{proof}

    \begin{remark}\label{rem:super-compatible-d-independence} In some situations it might be desirable to ask for a set $\mathcal{U}$ of local uniformizers that enjoys a stronger property than being $\tau$-compatible as in Definition~\ref{def:uniformizers}. For $\alpha\in\ec(\Ca)$, denote by $\mathcal{T}_\alpha$ the automorphism of $\bk$ induced by the translation by $\alpha$ on $\ec$ (analogously to how $\tau$ is defined relative to $s$, so that $\mathcal{T}_s=\tau$). Let us call $\mathcal{U}$ \emph{supercompatible} if $\mathcal{T}_\alpha(u_\alpha)=u_\idec$ for every $\alpha\in\ec(\Ca)$. For example, the complex analytic uniformizers $u_\alpha=z-\alpha$ and the rigid analytic uniformizers $u_\alpha=1-a^{-1}z$ that we utilized respectively throughout Section~\ref{sec:weierstrass} and Section~\ref{sec:tate} enjoy such a supercompatibility property. If $\mathcal{U}$ is supercompatible then the proof of Lemma~\ref{lem:d-r-independence} shows that in fact the $d^\mathcal{U}_{\omega,k}\in\Ca$ depend only on $k\in\mathbb{N}$, and not on $\omega\in\orbita$.  
    \end{remark}

The independence statements in the above Lemma~\ref{lem:d-r-independence} serve to justify the rampant abuses of notation in the following definition.

\begin{definition}[Structural constants of a $\tau$-pinning] \label{def:structural-constants}
    With notation as in Definition~\ref{def:a-zeta-differences} and Definition~\ref{deflem:phia}, we define the following structural constants arising from the $\tau$-pinning $\mathbf{P}=(\mathcal{R},\mathcal{U})$ of $\ec$.
     \begin{enumerate}
        \item For $\omega\in\orbita$ and $k\geq 1$, we set (for some/any $n\in\Z$) \[d_{\omega,k}^{\mathcal{U}}:=\begin{cases}
        -c_1^\mathcal{U}\bigl(\varphi^\mathbf{P}_{\omega,n,k},ns\bigr) & \text{if} \ \alpha_{\omega}\neq\idec \ \text{or} \ k\geq 2;\\
            1 & \text{if} \ \alpha_{\omega}=\idec \ \text{and} \ k=1.
        \end{cases}\]
        \item For $\omega\in\orbita$, $n\in \Z$, and $k\geq 1$, we set \[e^\mathbf{P}_{\omega,n,k}:=c_0^\mathcal{U}\Bigl(\varphi^\mathbf{P}_{\omega,n,k} + d^\mathcal{U}_{\omega,k}\cdot\za{n}{0},\ \idec\Bigr).\]
    \end{enumerate}
\end{definition}

\subsection{Panorbital residues and summability}

\begin{definition}[Panorbital residues - algebraic version]\label{def:pano-a} Relative to the $\tau$-pinning $\mathbf{P}=(\mathcal{R},\mathcal{U})$, where $\mathcal{R}=\{\alpha_{\omega}\mid \omega\in\orbita\}$ and $\mathcal{U}=\{u_\alpha\mid \alpha\in \ec(\Ca)\}$, the \emph{panorbital residues} of $f\in\bke$ of orders $0$ and $1$ relative to $\mathbf{P}$ are respectively
\begin{equation}\label{eq:pano-a-def}
    \begin{gathered}
        \pano^{\mathbf{P}}(f,0):=c_0^\mathcal{U}(f,\idec)-\sum_{\omega\in\orbita}\sum\limits_{n\in\mathbb{Z}}\sum_{k\geq 1}c_k^\mathcal{U}(f,\alpha_{\omega}\oplus ns)\cdot e^\mathbf{P}_{\omega,n,k};\quad \text{and}\\
    \pano^{\mathbf{P}}(f,1):=\sum_{\omega\in\orbita}\sum_{n\in\mathbb{Z}} \sum_{k\geq 1} n\cdot c_k^{\mathcal{U}}\bigl(f,\alpha_{\omega}\oplus ns)\cdot d_{\omega,k}^{\mathcal{U}};
    \end{gathered}
\end{equation} where the structural constants $d^\mathcal{U}_{\omega,k},e^\mathbf{P}_{\omega,n,k}\in\Ca$ associated with $\mathbf{P}$ are as in Definition~\ref{def:structural-constants}.
\end{definition}

\begin{remark}\label{rem:pano-analogue}
    The analogy between Definition~\ref{def:pano-l} in the Weierstrass setting and Definition~\ref{def:pano-q} in the Tate setting might seem closer and more direct than the corresponding analogy with the present algebraic Definition~\ref{def:pano-a}. 

    In the Weierstrass setting, the role of the $\varphi^\mathbf{P}_{\omega,n,k}$ in Definition~\ref{deflem:phia} was played by the $\varphi^\mathcal{R}_{\omega,n,k}$ in \eqref{eq:phil}, where we see that the Weierestrass analogue of the structural constants $d^\mathcal{U}_{\omega,k}$ is $0$ for $k\geq 2$ and $1$ for $k=1$, for all $\omega\in\orbitl$. Thus we see that these structural constants were already invisibly present in the order~$1$ panorbital residue in Definition~\ref{def:pano-l}, and therefore the Weierstrass analogue of the order~$1$ panorbital residue in Definition~\ref{def:pano-a} agrees with the one given in Definition~\ref{def:pano-l}. Moreover, since the Weierstrass analogue of the algebraic $\za{n}{0}\in\bk$ is $\zl(z-ns)-\zl(z)\in\bkl$ as in Lemma~\ref{lem:l-zeta-difference}, we see that the Weierstrass version of the structural constant $e^\mathbf{P}_{\omega,n,k}$ in Definition~\ref{def:structural-constants} is given by the constant terms in the local Laurent series expansions $c_0\Bigl(\frac{(-1)^{k-1}}{(k-1)!}\frac{d^{k-1}}{dz^{k-1}}\zl(z-\alpha_\omega-ns),0\Bigr)$, taking into account that $c_0\bigl(\zl(z),0\bigr)=0$ in the $k=1$ case. Thus we see that the Weierstrass analogue of the order~$0$ panorbital residue in Definition~\ref{def:pano-a} also agrees with the one given in Definition~\ref{def:pano-l}.

    In the Tate setting, the role of the $\varphi^\mathbf{P}_{\omega,n,k}$ in Definition~\ref{deflem:phia} was played by the $\varphi^\mathcal{R}_{\omega,n,k}$ in \eqref{eq:phiq}, where we see that the Tate analogue of the structural constants $d^\mathcal{U}_{\omega,k}$ is $1$ for every $k\geq 1$ and $\omega\in\orbitq$. Thus we see that these structural constants were already invisibly present in the order~$1$ panorbital residue in Definition~\ref{def:pano-q}, and therefore the Tate analogue of the order~$1$ panorbital residue in Definition~\ref{def:pano-a} agrees with the one given in Definition~\ref{def:pano-q}. Moreover, since the Tate analogue of the algebraic $\za{n}{0}\in\bk$ is $\zq(s^{-n}z)-\zq(z)\in\bkq$ as in Lemma~\ref{lem:q-zeta-difference}, we see that the Tate version of the structural constant $e^\mathbf{P}_{\omega,n,k}$ in Definition~\ref{def:structural-constants} is given by the constant terms in the local Laurent series expansions $c_0\left(\sum_{i=1}^k\binom{k-1}{i-1}\zqd{i-1}\bigl(s^{-n}\alpha_\omega^{-1}\bigr),1\right)$, taking into account that $c_0\bigl(\zq(z),1\bigr)=0$. Thus we see that the Tate analogue of the order~$0$ panorbital residue in Definition~\ref{def:pano-a} also agrees with the one given in Definition~\ref{def:pano-q}.
    
    We refer to Lemma~\ref{lem:zetaExp-a} and Proposition~\ref{prop:diff-pano} for a complementary perspective on the closeness of the analogies among all three definitions of panorbital residues, along the lines of Remark~\ref{rem:pano-wq-comparison}.
\end{remark}

The following result can be considered as an algebraic analogue of Proposition~\ref{prop:zetaExp-l} in the Weierstrass setting and Proposition~\ref{prop:zetaExp-q} in the Tate setting.

\begin{lemma}\label{lem:zetaExp-a}
    Let $\mathbf{P}=(\mathcal{R},\mathcal{U})$ be any $\tau$-pinning of $\mathcal{E}$ and let $f\in\bk$. With notation as in Definition~\ref{def:a-zeta-differences},  Definition~\ref{deflem:phia}, and Definition~\ref{def:structural-constants},
    \begin{equation}\label{eq:zetaExp-a}
    f=\pano^\mathbf{P}(f,0)+\sum_{\omega\in\orbita}\sum_{n\in\Z}\sum_{k\geq 1}c^\mathcal{U}_k(f,\alpha_\omega\oplus ns)\cdot\Bigl(\varphi^\mathbf{P}_{\omega,n,k}+d^\mathcal{U}_{\omega,k}\cdot\za{n}{0}\Bigr).
    \end{equation}
\end{lemma}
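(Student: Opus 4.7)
The plan is to show that $g := f - R$ vanishes identically, where $R$ denotes the right-hand side of \eqref{eq:zetaExp-a}. Since $f$ has only finitely many poles, only finitely many $c_k^\mathcal{U}(f,\alpha_\omega\oplus ns)$ are non-zero, so the triple sum in $R$ is a legitimate element of $\bk$. The strategy is to show first that $g$ has no poles anywhere on $\ec$, so that $g\in\mathcal{L}(0)=\Ca$ is constant by the Riemann-Roch Theorem~\ref{thm:rr}, and then to identify that constant as $0$.

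Given $\beta\in\ec(\Ca)$, write $\beta=\alpha_{\omega_\beta}\oplus n_\beta s$ for the unique $(\omega_\beta,n_\beta)$. The key observation is that each combination $\varphi^\mathbf{P}_{\omega,n,k}+d^\mathcal{U}_{\omega,k}\cdot\za{n}{0}$ is supported at only two places: it has a pole of order $k$ at $\alpha_\omega\oplus ns$ (with the correct normalization $c_k^\mathcal{U}=1$ and $c_j^\mathcal{U}=0$ for $0<j<k$ by the defining conditions in Definition~\ref{deflem:phia}), at most a simple pole at $\idec$, and is holomorphic elsewhere. Indeed, the potential secondary pole of $\varphi^\mathbf{P}_{\omega,n,k}$ at $ns$, which has $c_1^\mathcal{U}=-d^\mathcal{U}_{\omega,k}$ by Lemma~\ref{lem:d-r-independence}, is exactly cancelled by the $c_1^\mathcal{U}=+d^\mathcal{U}_{\omega,k}$ contribution of $d^\mathcal{U}_{\omega,k}\cdot\za{n}{0}$ coming from Remark~\ref{rem:a-zeta-differences}. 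Consequently, at any $\beta\neq\idec$ only the primary index $(\omega_\beta,n_\beta,k)$ contributes to the principal part of $R$, and it reproduces exactly $\sum_{k\geq 1}c_k^\mathcal{U}(f,\beta)\cdot u_\beta^{-k}$; so $g$ is regular at $\beta$. The same matching further yields $c_k^\mathcal{U}(R,\idec)=c_k^\mathcal{U}(f,\idec)$ for all $k\geq 2$, so that $g$ has at most a simple pole at $\idec$. Carrying this out requires distinguishing between the cases $\omega=\Z s$ (and within it, whether $\alpha_{\Z s}=\idec$, in which case $\varphi^\mathbf{P}_{\Z s,n,k}$ follows Definition~\ref{deflem:phia}(2) and the pole at $ns$ is absorbed into the primary pole) and $\omega\neq\Z s$; this case analysis is tedious but mechanical, and constitutes the main bookkeeping burden of the proof.

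To dispose of the last potential simple pole of $g$ at $\idec$, I apply the algebraic Residue Theorem~\cite[Thm.~III.7.14.2]{Hartshorne} to the differential form $g\varpi$, where $\varpi$ is any non-zero global section of $\Omega^1_{\ec/\Ca}$, as in the proof of Definition~\ref{deflem:a-zeta-difference}. Since $g\varpi$ is regular away from $\idec$ and has at most a simple pole there, its unique potentially non-zero residue $c_1^\mathcal{U}(g,\idec)/\delta(u_\idec)(\idec)$ must vanish; since $\delta(u_\idec)(\idec)\neq 0$ (where $\delta$ is the derivation on $\bk$ dual to $\varpi$), this forces $c_1^\mathcal{U}(g,\idec)=0$. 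Thus $g$ is holomorphic on all of $\ec$, so $g=c_0^\mathcal{U}(g,\idec)\in\Ca$ is constant.

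Finally, the structural constants $e^\mathbf{P}_{\omega,n,k}$ of Definition~\ref{def:structural-constants} were defined precisely so that $c_0^\mathcal{U}\bigl(\varphi^\mathbf{P}_{\omega,n,k}+d^\mathcal{U}_{\omega,k}\za{n}{0},\idec\bigr)=e^\mathbf{P}_{\omega,n,k}$. Substituting the defining expression \eqref{eq:pano-a-def} for $\pano^\mathbf{P}(f,0)$ thus yields $c_0^\mathcal{U}(g,\idec)=c_0^\mathcal{U}(f,\idec)-\pano^\mathbf{P}(f,0)-\sum_{\omega,n,k}c_k^\mathcal{U}(f,\alpha_\omega\oplus ns)\cdot e^\mathbf{P}_{\omega,n,k}=0$, so $g\equiv 0$ as claimed.
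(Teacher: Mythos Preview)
Your proof is correct and follows essentially the same strategy as the paper: show that $f$ minus the sum lies in a small Riemann--Roch space, hence is constant, then identify the constant via the definition of $\pano^\mathbf{P}(f,0)$. The only difference is a minor detour: once you have established that $g$ has at most a simple pole at $\idec$, you already know $g\in\mathcal{L}([\idec])$, and the Riemann--Roch Theorem~\ref{thm:rr} (applied to the degree-$1$ divisor $[\idec]$) gives $\mathcal{L}([\idec])=\Ca$ directly, so $g$ is constant without any appeal to the Residue Theorem. The paper's proof uses exactly this shortcut and is accordingly two sentences long. Your Residue Theorem argument is not wrong, just unnecessary.
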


\begin{proof}
The function $f'=f-\sum_{\omega,n,k}c^\mathcal{U}_k(f,\alpha_\omega\oplus ns)\bigl(\varphi^\mathbf{P}_{\omega,n,k}+d^\mathcal{U}_{\omega,k}\cdot\za{n}{0}\bigr)\in\bk$ belongs to $\mathcal{L}([\idec])=\Ca$, and therefore has the same value for $c_0^\mathcal{U}(f',\alpha)$ at any $\alpha\in\ec(\Ca)$, say at $\alpha=\idec$ (cf.~Remark~\ref{rem:pano-analogue}).
\end{proof}

Taking into account the fact that the structural constants $d_{\omega,1}^\mathcal{U}\neq 0$ for every $\omega\in\orbita$ by Corollary~\ref{cor:d-r-independence}, the following result strictly generalizes \cite[Lem.~B.15]{Dreyfus2018}.

\begin{corollary} \label{cor:zetaExp-a}For any $\tau$-compatible set of local uniformizers $\mathcal{U}$ and for any $f\in\bk$, \[\sum_{\omega\in\orbita}\sum_{k\geq 1}\ores^\mathcal{U}(f,\omega,k)\cdot d^\mathcal{U}_{\omega,k}=0.\]
\end{corollary}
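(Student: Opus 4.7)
The plan is to reduce this identity to a direct computation via the zeta expansion from Lemma~\ref{lem:zetaExp-a}. The functional $f\mapsto \sum_{\omega,k}\ores^\mathcal{U}(f,\omega,k)\cdot d^\mathcal{U}_{\omega,k}$ is $\Ca$-linear in $f$ and trivially vanishes on constants. After completing $\mathcal{U}$ to a $\tau$-pinning $\mathbf{P}=(\mathcal{R},\mathcal{U})$, Lemma~\ref{lem:zetaExp-a} writes any $f\in\bk$ as a $\Ca$-linear combination of the constant $\pano^\mathbf{P}(f,0)$ and the generators
\[ G_{\omega,n,k} := \varphi^\mathbf{P}_{\omega,n,k} + d^\mathcal{U}_{\omega,k}\cdot\za{n}{0}. \]
Only finitely many $G_{\omega,n,k}$ appear with nonzero coefficient (because $f$ has only finitely many poles), so it suffices to verify the identity on each generator separately.

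For each $G_{\omega,n,k}$, I would carry out the principal-part analysis directly from Definition~\ref{deflem:phia}, Remark~\ref{rem:a-zeta-differences}, and Definition~\ref{def:structural-constants}. The crucial cancellation occurs at $ns$: the simple pole of $\varphi^\mathbf{P}_{\omega,n,k}$ there has residue $-d^\mathcal{U}_{\omega,k}$, exactly balancing the residue $+d^\mathcal{U}_{\omega,k}$ contributed by $d^\mathcal{U}_{\omega,k}\cdot\za{n}{0}$. After this cancellation, the only remaining poles of $G_{\omega,n,k}$ are an order-$k$ pole at $\alpha_\omega\oplus ns$ (with $c_k^\mathcal{U}=1$ and all other principal-part coefficients equal to~$0$) and, when $n\neq 0$, a simple pole at $\idec$ with $c_1^\mathcal{U} = -d^\mathcal{U}_{\omega,k}$ inherited from $\za{n}{0}$. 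The only nonzero orbital residues of $G_{\omega,n,k}$ are therefore $\ores^\mathcal{U}(G_{\omega,n,k},\omega,k) = 1$ and $\ores^\mathcal{U}(G_{\omega,n,k},\Z s,1) = -d^\mathcal{U}_{\omega,k}$, and the required weighted sum collapses to
\[ d^\mathcal{U}_{\omega,k}\cdot 1 + d^\mathcal{U}_{\Z s,1}\cdot(-d^\mathcal{U}_{\omega,k}) \;=\; 0, \]
using $d^\mathcal{U}_{\Z s,1} = 1$ from Lemma~\ref{lem:d-r-independence}.

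The main technical obstacle is verifying this principal-part analysis uniformly across the several edge cases: when $\alpha_\omega = \idec$ (so $\omega=\Z s$, and $\varphi^\mathbf{P}_{\omega,n,k}$ either vanishes, for $k=1$, or collapses to $\hat{\varphi}^\mathcal{U}_{\Z s,n,k}$, for $k\geq 2$, whose order-$k$ and order-$1$ principal parts at $ns$ coincide in location); when $n = 0$ (so $\za{0}{0} = 0$, and the pole at $\idec$ must instead be supplied by $\varphi^\mathbf{P}_{\omega,0,k}$ via Definition~\ref{def:structural-constants}); and when $\omega = \Z s$ but $\alpha_{\Z s}\neq \idec$ (so the pole of $G_{\Z s,n,1}$ at $\alpha_\omega\oplus ns$ and its pole at $\idec$ both lie in the orbit $\Z s$ and contribute additively to $\ores^\mathcal{U}(G_{\Z s,n,1},\Z s,1)$). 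In each such case, a careful re-examination of the principal parts -- exploiting $d^\mathcal{U}_{\Z s,1}=1$ and the identities in Lemma~\ref{lem:d-r-independence} for $\hat{\varphi}^\mathcal{U}_{\Z s,n,k}$ -- confirms that the weighted sum $\sum_{\omega',k'}\ores^\mathcal{U}(G_{\omega,n,k},\omega',k')\cdot d^\mathcal{U}_{\omega',k'}$ still equals $0$, completing the reduction.
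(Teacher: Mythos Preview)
Your approach is correct, and it rests on the same foundation as the paper's proof---the zeta expansion of Lemma~\ref{lem:zetaExp-a}---but the packaging is different. You decompose $f$ into the generators $G_{\omega,n,k}$ and then verify the identity on each generator by computing all of its orbital residues, handling several edge cases. The paper takes a shorter route: it simply observes that the function $f'=f-\sum_{\omega,n,k}c_k^{\mathcal{U}}(f,\alpha_\omega\oplus ns)\,G_{\omega,n,k}$ from the proof of Lemma~\ref{lem:zetaExp-a} lies in $\mathcal{L}([\idec])=\Ca$, hence is constant, so $c_1^{\mathcal{U}}(f',\idec)=0$; computing that single coefficient directly yields $\sum_{\omega,k}\ores^{\mathcal{U}}(f,\omega,k)\,d^{\mathcal{U}}_{\omega,k}$. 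The underlying local computations are the same (one must know $c_1^{\mathcal{U}}(G_{\omega,n,k},\idec)=-d^{\mathcal{U}}_{\omega,k}$, with the single exception at $(\Z s,n_\idec,1)$ that exactly cancels the term $c_1^{\mathcal{U}}(f,\idec)$), but the paper's framing avoids the explicit case analysis you describe: the Riemann--Roch fact $\mathcal{L}([\idec])=\Ca$ absorbs it wholesale. Your route has the compensating virtue of making the cancellation mechanism at each generator fully visible.
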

\begin{proof}
    For any $\tau$-pinning $\mathbf{P}=(\mathcal{R},\mathcal{U})$ of $\ec$, the constant function $f'$ described in the proof of Lemma~\ref{lem:zetaExp-a} has $c_1^\mathcal{U}(f',\idec)=\sum_{\omega,k}\ores^\mathcal{U}(f,\omega,k)\cdot d^\mathcal{U}_{\omega,k}$.
\end{proof}

\begin{proposition} \label{prop:diff-pano}
    Let $\mathbf{P}=(\mathcal{R},\mathcal{U})$ be a $\tau$-pinning of $\ec$ with $\mathcal{U}=\{u_\alpha\}_{\alpha\in\ec(\Ca)}$ as in Definition~\ref{def:uniformizers}. For the unique invariant differential $0\neq\varpi\in H^0(\ec,\Omega^1_{\ec/\Ca})$ such that the valuation $\nu_\idec(du_\idec-\varpi)\geq 1$, we have
    \[\pano^\mathbf{P}(f,1)=\sum_{\omega\in\orbita}\sum_{n\in\Z}n\cdot\mathrm{res}(f\varpi,\alpha_\omega\oplus ns).\]
\end{proposition}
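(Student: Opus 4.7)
The plan is to identify the structural constants $d^\mathcal{U}_{\omega,k}$ with certain Taylor coefficients of the invariant differential $\varpi$ with respect to the uniformizers in $\mathcal{U}$, and then expand the right-hand side directly using a local-at-each-point formula for $\mathrm{res}(f\varpi,\alpha)$.

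First, since $\varpi$ is nowhere vanishing, at each $\alpha\in\ec(\Ca)$ I can uniquely write $\varpi = g_\alpha(u_\alpha)\,du_\alpha$ with $g_\alpha(u) = \sum_{i\geq 0}g_{\alpha,i}u^i\in\Ca[[u]]^\times$ a unit. Translation-invariance of $\varpi$ combined with the $\tau$-compatibility $\tau(u_{\alpha\oplus s})=u_\alpha$ (whence $\tau^*du_{\alpha\oplus s}=du_\alpha$) forces $g_{\alpha\oplus s}(u)=g_\alpha(u)$ as formal power series; so $g_\omega := g_\alpha$ depends only on the orbit $\omega\ni\alpha$, and the normalization $\nu_\idec(du_\idec-\varpi)\geq 1$ translates to $g_{\Z s,0}=1$. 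Multiplying the Laurent expansion~\eqref{eq:laurent-coeffs-a} of $f$ at $\alpha\in\omega$ by $g_\omega(u_\alpha)$ and reading off the $u_\alpha^{-1}$-coefficient yields the local residue formula
\[\mathrm{res}(f\varpi,\alpha)=\sum_{k\geq 1}c_k^\mathcal{U}(f,\alpha)\cdot g_{\omega,k-1}.\]
Substituting this into the right-hand side of the desired identity reduces the proposition to the \emph{structural identity} $d^\mathcal{U}_{\omega,k} = g_{\omega,k-1}$ for every $\omega\in\orbita$ and every $k\geq 1$.

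I would prove this structural identity by applying the algebraic Residue Theorem~\cite[Thm.~III.7.14.2]{Hartshorne} to the differential forms $\varphi\varpi$, where $\varphi$ is one of the basic ancillary elements supplied by Definition~\ref{deflem:phia}. For $\omega\neq\Z s$, take $\varphi = \varphi^\mathbf{P}_{\omega,0,k}\in\mathcal{L}(k[\alpha_\omega]+[\idec])$: the only nonzero residues of $\varphi\varpi$ occur at $\alpha_\omega$ (where the vanishing of $c_j^\mathcal{U}(\varphi,\alpha_\omega)$ for $j\neq k$ gives $\mathrm{res}(\varphi\varpi,\alpha_\omega)=g_{\omega,k-1}$) and at $\idec$ (where $\varphi$ has at most a simple pole and $c_0^\mathcal{U}(\varphi,\idec)=0$, so $\mathrm{res}(\varphi\varpi,\idec)= c_1^\mathcal{U}(\varphi,\idec)\cdot g_{\Z s,0} = -d^\mathcal{U}_{\omega,k}$); these must sum to zero. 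For $\omega=\Z s$ and $k\geq 2$, since $d^\mathcal{U}_{\Z s,k}$ is independent of $\mathcal{R}$ by Lemma~\ref{lem:d-r-independence} I may take $\alpha_{\Z s}=\idec$ and apply the same argument to $\varphi=\hat{\varphi}^\mathcal{U}_{\Z s,0,k}$ (whose only pole is at $\idec$, with $c_k=1$, $c_1=-d^\mathcal{U}_{\Z s,k}$, and all other $c_j$ vanishing) to obtain $g_{\Z s,k-1}-d^\mathcal{U}_{\Z s,k}=0$. The remaining case $\omega=\Z s$, $k=1$ is the tautology $d^\mathcal{U}_{\Z s,1}:=1=g_{\Z s,0}$.

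The main obstacle is the first step---showing that $g_\alpha(u)$ depends only on the orbit of $\alpha$---where the twin hypotheses of invariance of $\varpi$ and $\tau$-compatibility of $\mathcal{U}$ conspire precisely to make the structural identity $d^\mathcal{U}_{\omega,k}=g_{\omega,k-1}$ coherent in the first place. Once this orbit-invariance is in hand, Step 2 reduces to a careful but routine Residue Theorem bookkeeping.
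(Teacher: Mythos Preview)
Your proposal is correct and follows essentially the same approach as the paper: both hinge on identifying the structural constants $d^\mathcal{U}_{\omega,k}$ with the local Taylor coefficients of $\varpi$ via the Residue Theorem applied to the ancillary elements $\varphi^\mathbf{P}_{\omega,0,k}$ and $\hat{\varphi}^\mathcal{U}_{\Z s,0,k}$. The only organizational difference is that you establish orbit-invariance of the local expansion $\varpi=g_\alpha(u_\alpha)\,du_\alpha$ upfront for every orbit and then read off residues directly from Laurent series, whereas the paper invokes Lemma~\ref{lem:zetaExp-a} to decompose $f$ and routes the residue computation for $\omega\neq\Z s$ through the simple pole at $ns\in\Z s$---but this is a cosmetic rather than substantive distinction.
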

\begin{proof}
    For any initial choice of $0\neq \tilde{\varpi}\in H^0(\ec,\Omega^1_{\ec/\Ca})$, we have a local formal expansion \mbox{$\tilde{\varpi}=\left(\sum_{i\geq 0}\tilde{h}_iu_\idec^i\right)du_\idec$} with $\tilde{h}_0\neq 0$, and after scaling we obtain the unique $\varpi:=\tilde{h}_0^{-1}\tilde{\varpi}$ with $\nu_\idec(du_\idec-\varpi)\geq 1$. By Lemma~\ref{lem:zetaExp-a}, for each $\Z s\neq\omega\in\orbita$ and $n\in\Z$ we have \[\mathrm{res}(f\varpi,\alpha_\omega\oplus ns)=\sum_{k\geq 1}c_k^\mathcal{U}(f,\alpha_\omega\oplus ns)\cdot\mathrm{res}\bigl(\varphi^\mathbf{P}_{\omega,n,k}\varpi,\alpha_\omega\oplus ns\bigr).\] By the Residue Theorem \cite[Thm.~III.7.14.2]{Hartshorne}, \[\mathrm{res}\bigl(\varphi^\mathbf{P}_{\omega,n,k}\varpi,\alpha_\omega\oplus ns\bigr)=-\mathrm{res}\bigl(\varphi^\mathbf{P}_{\omega,n,k}\varpi,ns\bigr)=-c_1^\mathcal{U}\bigl(\varphi^\mathbf{P}_{\omega,n,k},ns\bigr)=d^\mathcal{U}_{\omega,k}.\] It remains to show that, for each $m\in\Z$, \[\mathrm{res}(f\varpi, ms)=\sum_{k\geq 1}c_k^\mathcal{U}(f,ms)\cdot d^\mathcal{U}_{\Z s,k}.\] A priori we only know that \[\mathrm{res}(f\varpi,ms)=\sum_{k\geq 1}c_k^\mathcal{U}(f,ms)\cdot \rho_{k-1},\] where $\rho_i\in\Ca$ are the coefficients occurring in the formal local expansion \mbox{$\varpi=\left(1+\sum_{i\geq 1}\rho_iu_{ms}^i\right)du_{ms}$} at $ms$. These coefficients $\rho_i\in\Ca$ are independent of $m\in\Z$ due to the $\tau$-compatibility of $\mathcal{U}$ and the translation invariance of $\varpi$ \cite[Prop.~III.5.1]{SilvermanIntro}. It suffices to show that $\rho_{k-1}=d^\mathcal{U}_{\Z s,k}$ from Definition~\ref{def:structural-constants}. By Lemma~\ref{lem:d-r-independence}, this is already proved in case $k=1$, and for the remaining $k\geq 2$ we have \[0=\mathrm{res}(\hat{\varphi}^\mathcal{U}_{\Z s,n,k}\varpi,ns)=\rho_{k-1}\cdot c_k^\mathcal{U}(\hat{\varphi}^\mathcal{U}_{\Z s,n,k},ns)+\rho_0\cdot c_1^\mathcal{U}(\hat{\varphi}^\mathcal{U}_{\Z s,n,k},ns)=\rho_{k-1}-d^\mathcal{U}_{\Z s,k},\] again by the Residue Theorem \cite[Thm.~III.7.14.2]{Hartshorne}.
\end{proof}

\begin{remark}\label{rem:diff-pano} The proof of Proposition~\ref{prop:diff-pano} above gives an alternative way to compute the structural constants $d^\mathcal{U}_{\Z s,k}$: since $\varpi=\sum_{k\geq 1}d^\mathcal{U}_{\Z s,k}u_\idec^{k-1}du_\idec$ and $du_\idec=\delta(u_\idec)\varpi$, it follows that \[\sum_{k\geq 1}d^\mathcal{U}_{\Z s,k}u_\idec^{k}=\frac{u_\idec}{\delta(u_\idec)}\in u_\idec\Ca[[u_\idec]].\] If $\mathcal{U}$ happens to be supercompatible (see Remark~\ref{rem:super-compatible-d-independence}), then this gives a recipe to compute every $d^\mathcal{U}_{\omega,k}\in\Ca$ at once without having to compute every local expansion prescribed by Definition~\ref{def:structural-constants}.    
\end{remark}

\begin{theorem}\label{thm:maina} Let $\mathbf{P}=(\mathcal{R},\mathcal{U})$ be any $\tau$-pinning of $\ec$, and let $f\in\bke$. Then $f$ is elliptically summable if and only if $\pano^{\mathbf{P}}(f,0)=0$, $\pano^{\mathbf{P}}(f,1)=0$, and $\ores^{\mathcal{U}}(f,\omega,k)=0$ for every $\omega\in\orbita$ and $k\geq 1$.
\end{theorem}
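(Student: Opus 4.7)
The plan is to adapt the strategy already deployed in Theorem~\ref{thm:mainl} and Theorem~\ref{thm:mainq} to the present purely algebraic setting, using the $\za{\ell}{m}$ from Definition~\ref{def:a-zeta-differences} as the algebraic avatars of the Weierstrass differences $\zl(z-\ell s)-\zl(z-ms)$ and the Tate differences $\zq(s^{-\ell}z)-\zq(s^{-m}z)$, and the zeta-expansion of Lemma~\ref{lem:zetaExp-a} as the algebraic avatar of the expansions \eqref{eq:zetaExp-l} and \eqref{eq:zetaExp-q}.

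For the forward direction, first suppose $f=\tau(g)-g$ for some $g\in\bk$. From the $\tau$-compatibility of $\mathcal{U}$ we have $c_k^\mathcal{U}(\tau(g),\alpha)=c_k^\mathcal{U}(g,\alpha\oplus s)$, and so $\ores^\mathcal{U}(\tau(g),\omega,k)=\ores^\mathcal{U}(g,\omega,k)$ for every $\omega$ and $k$; this yields the vanishing of every $\ores^\mathcal{U}(f,\omega,k)$. For the first-order panorbital residue I will use the concrete description of $\pano^\mathbf{P}(g,1)$ afforded by Proposition~\ref{prop:diff-pano}, together with the translation invariance of the differential $\varpi$ (so that $\mathrm{res}(\tau(g)\varpi,\alpha)=\mathrm{res}(g\varpi,\alpha\oplus s)$). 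A reindexing of the telescoping sum then collapses $\pano^\mathbf{P}(\tau(g),1)-\pano^\mathbf{P}(g,1)$ to $-\sum_{\omega\in\orbita}\sum_{n\in\Z}\mathrm{res}(g\varpi,\alpha_\omega\oplus ns)$, which vanishes by the Residue Theorem \cite[Thm.~III.7.14.2]{Hartshorne}. For the order-zero panorbital residue, I will apply $\tau$ to both sides of the zeta-expansion \eqref{eq:zetaExp-a} of $g$; since $\tau(\varphi^\mathbf{P}_{\omega,n,k})=\varphi^\mathbf{P}_{\omega,n-1,k}$ by Remark~\ref{rem:phia} and $\tau(\za{n}{0})=\za{n-1}{-1}=\za{n-1}{0}-\za{-1}{0}$ by Remark~\ref{rem:a-zeta-differences}, a reindexing $m=n-1$ and invoking Lemma~\ref{lem:zetaExp-a} for $\tau(g)$ gives $\pano^\mathbf{P}(\tau(g),0)-\pano^\mathbf{P}(g,0)=\za{0}{-1}\cdot\sum_{\omega,k}\ores^\mathcal{U}(\tau(g),\omega,k)\cdot d^\mathcal{U}_{\omega,k}$, which is zero thanks to Corollary~\ref{cor:zetaExp-a}.

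For the converse direction, suppose all orbital and panorbital residues of $f$ vanish. Since $\varphi^\mathbf{P}_{\omega,n,k}-\varphi^\mathbf{P}_{\omega,0,k}=\varphi^\mathbf{P}_{\omega,n,k}-\tau^n(\varphi^\mathbf{P}_{\omega,n,k})$ is summable, I will first define
\[
\tilde{f}:=f+\sum_{\omega,n,k}c_k^\mathcal{U}(f,\alpha_\omega\oplus ns)\cdot\bigl(\varphi^\mathbf{P}_{\omega,0,k}-\varphi^\mathbf{P}_{\omega,n,k}\bigr),
\]
so that $f$ is summable iff $\tilde{f}$ is. Plugging in the zeta-expansion \eqref{eq:zetaExp-a} and using $\pano^\mathbf{P}(f,0)=0$ and all $\ores^\mathcal{U}(f,\omega,k)=0$, this collapses to
\[
\tilde{f}=\sum_{n\in\Z}\tilde{c}(n)\cdot\za{n}{0},\qquad\text{with}\qquad \tilde{c}(n):=\sum_{\omega,k}c_k^\mathcal{U}(f,\alpha_\omega\oplus ns)\cdot d^\mathcal{U}_{\omega,k},
\]
where by hypothesis $\sum_n n\tilde{c}(n)=\pano^\mathbf{P}(f,1)=0$ and by Corollary~\ref{cor:zetaExp-a} also $\sum_n\tilde{c}(n)=0$. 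I then define the algebraic analogues $\psi_j:=\za{j}{1}$ for $j\geq 2$ and $\psi_j:=\za{j}{0}$ for $j\leq -1$ of the auxiliary functions \eqref{eq:psil-def} and \eqref{eq:psiq-def}, and further reduce by summable elements
\[
\bar{f}:=\tilde{f}+\sum_{n\leq-1}\sum_{j=n}^{-1}\tilde{c}(n)\cdot\bigl(\tau^{-1}(\psi_j)-\psi_j\bigr)+\sum_{n\geq 2}\sum_{j=2}^{n}\tilde{c}(n)\cdot\bigl(\tau(\psi_j)-\psi_j\bigr).
\]
Using the identity $\za{\ell}{m}+\za{m}{n}=\za{\ell}{n}$ of Remark~\ref{rem:a-zeta-differences} together with $\tau^n(\za{\ell}{m})=\za{\ell-n}{m-n}$, a telescoping computation yields $\sum_{j=2}^n(\tau(\psi_j)-\psi_j)=n\za{1}{0}-\za{n}{0}$ for $n\geq 2$ and similarly $\sum_{j=n}^{-1}(\tau^{-1}(\psi_j)-\psi_j)=n\za{1}{0}-\za{n}{0}$ for $n\leq-1$, which are the algebraic analogues of the identities \eqref{eq:psil-comp1}--\eqref{eq:psil-comp2}. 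Substituting and using the vanishing of $\sum_n\tilde{c}(n)$, the $\za{n}{0}$ terms for $n\notin\{0,1\}$ cancel and one collects $\bar{f}=\pano^\mathbf{P}(f,1)\cdot\za{1}{0}=0$, concluding that $f$ is summable.

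The main obstacle I expect lies in the forward direction, specifically in proving that $\pano^\mathbf{P}(\tau(g),0)=\pano^\mathbf{P}(g,0)$. Unlike the Weierstrass and Tate settings, where this invariance followed almost trivially by applying $\tau$ to the respective zeta-expansions, here the $\tau$-orbit of the ancillary term $\za{n}{0}$ produces an unwanted $\za{0}{-1}$ correction that does not cancel termwise; removing it requires the uniqueness of the zeta-expansion from Lemma~\ref{lem:zetaExp-a} and the identity from Corollary~\ref{cor:zetaExp-a}, which conspicuously couples the orbital residues with the structural constants $d^\mathcal{U}_{\omega,k}$. The converse direction is conceptually analogous to the Weierstrass and Tate cases and amounts to careful bookkeeping with the $\za{\ell}{m}$ identities of Remark~\ref{rem:a-zeta-differences}, but the algebraic identities replacing \eqref{eq:psil-comp1}--\eqref{eq:psil-comp2} need to be checked directly.
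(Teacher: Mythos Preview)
Your proposal is correct and follows essentially the same route as the paper's own proof: the same use of Proposition~\ref{prop:diff-pano} and the Residue Theorem for $\pano^\mathbf{P}(\cdot,1)$, the same application of $\tau$ to the zeta-expansion together with Corollary~\ref{cor:zetaExp-a} for $\pano^\mathbf{P}(\cdot,0)$, and the same two-step reduction $f\leadsto\tilde f\leadsto\bar f$ via the auxiliary $\psi_j$ and the telescoping identities $\sum_j(\tau^{\pm1}(\psi_j)-\psi_j)=n\za{1}{0}-\za{n}{0}$. One small expository slip: in your final collection you attribute the cancellation of the $\za{n}{0}$ terms for $n\notin\{0,1\}$ to the vanishing of $\sum_n\tilde c(n)$, but in fact they cancel directly against the $-\za{n}{0}$ contributed by the telescoping identity (and $\za{0}{0}=0$), so that $\bar f=\bigl(\sum_n n\tilde c(n)\bigr)\za{1}{0}$ without ever invoking $\sum_n\tilde c(n)=0$.
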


\begin{proof}
   First suppose $f=\tau(g)-g$ for some $g\in\bk$. From the fact that $c_k^\mathcal{U}(\tau(g),\alpha)=c_k^\mathcal{U}(g,\alpha\oplus s)$ for every $\alpha\in\ec(\Ca)$ and $k\geq 1$ follows the relation $\ores^\mathcal{U}(\tau(g),\omega,k)=\ores^\mathcal{U}(g,\omega,k)$ for every $\omega\in\orbitq$ and $k\geq 1$, and therefore every $\ores(f,\omega,k)=0$. Moreover, since each $\tau\bigl(\varphi^\mathbf{P}_{\omega,n,k}\bigr)=\varphi^\mathbf{P}_{\omega,n-1,k}$ (see Definition~\ref{deflem:phia} and Remark~\ref{rem:phia}) and each $\tau\bigl(\za{n}{0}\bigr)=\za{n-1}{0}+\za{0}{-1}$ (see Definition~\ref{def:a-zeta-differences} and Remark~\ref{rem:a-zeta-differences}), applying $\tau$ to both sides of the expansion \eqref{eq:zetaExp-a} for $g$, we find that similarly
   \begin{align*}
       \pano^\mathbf{P}(g,0)&=\pano^\mathbf{P}(\tau(g),0)-\za{0}{-1}\cdot\sum_{\omega\in\orbita}\sum_{k\geq 1}\ores^\mathcal{U}(g,\omega,k)\cdot d^\mathcal{U}_{\omega,k}\\&=\pano^\mathbf{P}(\tau(g),0)
   \end{align*}
   by Corollary~\ref{cor:zetaExp-a}, and therefore $\pano^\mathbf{P}(f,0)=0$. Finally, for the unique invariant differential $\varpi\in H^0(\ec,\Omega^1_{\ec/\Ca})$ such that $\nu_\idec(du_\idec-\varpi)\geq 1$ we similarly have that $\mathrm{res}(\tau(g)\varpi,\alpha)=\mathrm{res}(g\varpi,\alpha\oplus s)$, and thus by Proposition~\ref{prop:diff-pano} \begin{multline*}\pano^\mathbf{P}(\tau(g),1) =\sum_{\omega\in\orbita}\sum_{n\in \Z}n\cdot\mathrm{res}\bigl(g\varpi,\alpha_\omega\oplus (n+1)s\bigr)\\
    =\pano^\mathbf{P}(g,1)-\sum_{\omega\in\orbita}\sum_{n\in\Z}\mathrm{res}\bigl(g\varpi,\alpha_\omega\oplus ns\bigr)=\pano^\mathbf{P}(g,1),\end{multline*} by the Residue Theorem \cite[Thm.~III.7.14.2]{Hartshorne}. Thus $\pano^\mathcal{R}(f,1)=0$. 

    To prove the converse, let us suppose that every orbital residue and both panorbital residues of $f$ are zero. For every $\omega\in\orbita$, $n\in\Z$, and $k\geq 1$, let $\varphi^{\mathbf{P}}_{\omega,n,k}\in\bk$ be as in Definition~\ref{deflem:phia}, and let us consider

    \begin{equation}\label{eq:ftildea-def}\tilde{f}:=f+\sum_{\omega\in\orbita}\sum_{n\in\Z}\sum_{k\geq 1}c_k^{\mathcal{U}}(f,\alpha_{\omega}\oplus ns)\cdot\Bigl(\tau^n\bigl(\varphi^\mathbf{P}_{\omega,n,k}\bigr)-\varphi^\mathbf{P}_{\omega,n,k}\Bigr).\end{equation}

    Since $\tau^n(\varphi)-\varphi$ is summable for any $\varphi\in\bke$ and $n\in\Z$, $f$ is summable if and only if $\tilde{f}\in\bke$ is summable. Moreover, since each $\tau^n\bigl(\varphi^\mathbf{P}_{\omega,n,k}\bigr)=\varphi^\mathbf{P}_{\omega,0,k}$ (see Remark~\ref{rem:phia}), writing $f$ as in \eqref{eq:zetaExp-a} yields
    \begin{align} 
        \tilde{f} &=0+\sum_{\omega\in\orbita}\sum_{k\geq 1}\left(\ores^{\mathcal{U}}(f,\omega,k)\cdot \varphi_{\omega,0,k}^{\mathbf{P}}
        + \sum_{n\in\Z}c^\mathcal{U}_k(f,\alpha_\omega\oplus ns)\cdot d^\mathcal{U}_{\omega,k}\cdot\za{n}{0}\right) \notag\\ 
        &= 0+\sum_{\omega\in\orbita}\sum_{n\in\Z}\sum_{k\geq 1}c^\mathcal{U}_k(f,\alpha_\omega\oplus ns)\cdot d^\mathcal{U}_{\omega,k}\cdot\za{n}{0}, \label{eq:ftildea-short}
    \end{align} where the leftmost $0$'s in \eqref{eq:ftildea-short} arise from our assumptions that $\pano^\mathbf{P}(f,0)$ and the $\ores^\mathcal{U}(f,\omega,k)=0$ all vanish. Next let us again simplify notation by defining 
    \begin{align*}\tilde{c}(n)&:=c_1^\mathcal{U}(\tilde{f},ns)=\sum_{\omega\in\orbita}\sum_{k\geq 1}c_k^\mathcal{U}(f,\alpha_{\omega}\oplus ns)\cdot d_{\omega,k}^\mathcal{U}\in\Ca\qquad\text{for} \ n\neq 0;\qquad \text{and} \\ \tilde{c}(0)&:=c_1^\mathcal{U}(\tilde{f},\idec)=\sum_{\omega\in\orbita}\sum_{k\geq 1}\ores^\mathcal{U}(f,\omega,k)\cdot d^\mathcal{U}_{\omega,k}=0\qquad\text{(cf.~Corollary~\ref{cor:zetaExp-a}),}\end{align*}
     so that $\tilde{f} = \sum_{n\in\mathbb{Z}} \tilde{c}(n)\za{n}{0}$ in \eqref{eq:ftildea-short} and $\sum_{n\in\Z} n\tilde{c}(n) = \pano^{\mathbf{P}}(f,1)$. Let us also define the auxiliary functions as in Definition~\ref{def:a-zeta-differences}
    \begin{equation}\label{eq:psia-def}
    \psi_j:=\begin{cases}
        \za{j}{1} & \text{for} \ j\geq 2;\\
        \za{j}{0} & \text{for} \ j\leq-1.
    \end{cases}    
    \end{equation}
    We now consider the further reduction of $f$ by summable elements of $\bke$:
    \begin{equation}\label{eq:fbara-def}\bar{f}:=\tilde{f}+\sum_{n\leq -1}\sum_{j=n}^{-1}\tilde{c}(n)\cdot\Bigl(\tau^{-1}\bigl(\psi_j\bigr)-\psi_j\Bigr)+\sum_{n\geq 2}\sum_{j=2}^n\tilde{c}(n)\cdot \Bigl(\tau\bigl(\psi_j\bigr)-\psi_j\Bigr).
    \end{equation}
    Once again we see that $f$ is summable if and only if $\bar{f}$ is summable. Moreover, we find from the Definition~\ref{def:a-zeta-differences} and the basic properties detailed in Remark~\ref{rem:a-zeta-differences} that
    \begin{equation}\label{eq:psia-comp1}
        \sum_{j=2}^n\Bigl(\tau\bigl(\psi_j\bigr)-\psi_j\Bigr)=n\za{1}{0}-\za{n}{0}
    \end{equation} for $n\geq 2$, and similarly that
    \begin{equation}\label{eq:psia-comp2}
        \sum_{j=n}^{-1}\Bigl(\tau^{-1}\bigl(\psi_j\bigr)-\psi_j\Bigr)=n\za{1}{0}-\za{n}{0}
    \end{equation} for $n\leq -1$. It follows that $\bar{f}\in\mathcal{L}\bigl([s]+[\idec]\bigr)$. Thus, writing the $\tilde{f}$ as in \eqref{eq:ftildea-short} in \eqref{eq:fbara-def}, the computations \eqref{eq:psia-comp1} and \eqref{eq:psia-comp2} yield that \vspace{-.07in}
     \[
        \bar{f}=\overbrace{\tilde{c}(0)\za{0}{0}}^{0\cdot0}+\tilde{c}(1)\za{1}{0}+ \sum_{n\in\mathbb{Z}-\{0,1\}}n\tilde{c}(n)\za{1}{0}=\pano^\mathbf{P}(f,1)\cdot\za{1}{0}=0.
    \] This concludes the proof that $f$ is summable.    \end{proof}

\begin{remark}\label{rem:reduced-form-a}
    Whether or not any orbital or panorbital residues of $f\in\bk$ vanish, defining $\tilde{f}$ in terms of $f$ as in \eqref{eq:ftildea-def}, and subsequently $\bar{f}$ in terms of $\tilde{f}$ as in \eqref{eq:fbara-def}, results in the \emph{reduced form} for $f$ (cf.~\cite[Lem.~B.14]{Dreyfus2018}):
    \begin{multline*}\bar{f}=\pano^\mathbf{P}(f,0)+\pano^\mathbf{P}(f,1)\cdot\za{1}{0}\\+\sum_{\omega\in\orbita}\sum_{k\geq 1}\ores^\mathcal{U}(f,\omega,k)\cdot\varphi^\mathbf{P}_{\omega,0,k},\end{multline*} relative to a choice of $\tau$-pinning $\mathbf{P}=(\mathcal{R},\mathcal{U})$ as in Definition~\ref{def:pinninga} and with $\varphi^\mathbf{P}_{\omega,0,k}\in\bk$ as in Definition~\ref{deflem:phia} and $\za{1}{0}\in\bk$ as in Definition~\ref{deflem:a-zeta-difference} (cf.~Remark~\ref{rem:reduced-form-l} and Remark~\ref{rem:reduced-form-q}), and such that $\bar{f}-f$ is summable.
\end{remark}

In Remark~\ref{rem:pano-pinning-l} and Remark~\ref{rem:pano-pinning-q} we explicitly tracked the effect of the choice of pinning on the panorbital residues defined respectively in the Weierstrass and Tate settings. The following results accomplish the analogous goal in the present algebraic setting. We address separately the simpler effect of modifying the set of representatives $\mathcal{R}$ in Lemma~\ref{lem:pano-pinning-ar}, and the more complicated effect of modifying the set of $\tau$-compatible local uniformizers $\mathcal{U}$ in Lemma~\ref{lem:pano-pinning-au}.

\begin{lemma}\label{lem:pano-pinning-ar} Let $\mathbf{P}=(\mathcal{R},\mathcal{U})$ be a $\tau$-pinning of $\ec$ with $\mathcal{R}=\{\alpha_\omega\}_{\omega\in\orbita}$ as in Definition~\ref{def:pinninga}. For $\mathcal{R}'=\{\alpha'_\omega\}_{\omega\in\orbita}$ any other choice of $\tau$-orbit representatives, denote by $n_\omega\in\Z$ be the unique integers such that $\alpha'_\omega=\alpha_\omega\oplus n_\omega s$ for each $\omega\in\orbita$. Then for $\mathbf{P}':=(\mathcal{R}',\mathcal{U})$ we have
\begin{align}\label{eq:pano-pinning-ar1} 
    \pano^{\mathbf{P}'}(f,1)&=\pano^\mathbf{P}(f,1)-\!\!\sum_{\omega\in\orbita}\sum_{k\geq 1}n_\omega \ores^\mathcal{U}(f,\omega,k) d_{\omega,k}^{\mathcal{U}};\\
    \pano^{\mathbf{P}'}(f,0)&=\pano^\mathbf{P}(f,0)+\!\!\sum_{\omega\in\orbita} \sum_{k\geq 1}\ores^\mathcal{U}(f,\omega,k) e^\mathbf{P}_{\omega,n_\omega,k}.\label{eq:pano-pinning-ar0}
\end{align}
In particular, the panorbital residues of $f\in\bk$ are independent of the choice of $\mathcal{R}$ whenever all the orbital residues of $f$ vanish.
\end{lemma}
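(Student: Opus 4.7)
I would prove \eqref{eq:pano-pinning-ar1} by a direct reindexing. In the defining sum for $\pano^{\mathbf{P}'}(f,1)$, substitute $n = n' - n_\omega$ so that $\alpha'_\omega \oplus ns = \alpha_\omega \oplus n's$; the factor $n$ becomes $n' - n_\omega$, and splitting it into $n'$ and $-n_\omega$ recovers $\pano^\mathbf{P}(f,1)$ plus a correction term involving $\sum_{n'} c_k^\mathcal{U}(f, \alpha_\omega \oplus n's) = \ores^\mathcal{U}(f,\omega,k)$.

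The identity \eqref{eq:pano-pinning-ar0} is more subtle and requires comparing the structural constants $e^\mathbf{P}_{\omega,n,k}$ and $e^{\mathbf{P}'}_{\omega,n',k}$. The key step is to establish that, for every $\omega \in \orbita$, $m \in \Z$, and $k \geq 1$,
\[
\varphi^{\mathbf{P}'}_{\omega, m, k} = \varphi^\mathbf{P}_{\omega, m + n_\omega, k} + d^\mathcal{U}_{\omega, k} \cdot \za{m + n_\omega}{m} + c_{\omega, k}
\]
for some single constant $c_{\omega, k} \in \Ca$ independent of $m$. Indeed, $\varphi^{\mathbf{P}'}_{\omega, m, k}$ and $\varphi^\mathbf{P}_{\omega, m + n_\omega, k}$ have matching $k$-th order principal parts at their common pole $\alpha'_\omega \oplus ms = \alpha_\omega \oplus (m + n_\omega)s$, together with simple-pole coefficients $-d^\mathcal{U}_{\omega,k}$ at $ms$ and $(m+n_\omega)s$ respectively (by Lemma~\ref{lem:d-r-independence}), which precisely match the principal parts of $d^\mathcal{U}_{\omega,k} \cdot \za{m+n_\omega}{m}$. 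Hence the difference lies in $\mathcal{L}([\idec]) = \Ca$ by the Riemann-Roch Theorem~\ref{thm:rr}. Independence of $m$ then follows by applying $\tau^m$ and invoking the $\tau$-equivariance encoded in Remarks~\ref{rem:phia} and~\ref{rem:a-zeta-differences}. Specializing to $m = 0$, taking $c_0^\mathcal{U}(\cdot,\idec)$ on both sides, and using the normalization $c_0^\mathcal{U}(\varphi^{\mathbf{P}'}_{\omega,0,k}, \idec) = 0$ from Definition-Lemma~\ref{deflem:phia}(1c), I would read off $c_{\omega, k} = -e^\mathbf{P}_{\omega, n_\omega, k}$.

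To conclude, using the identity $\za{m+n_\omega}{m} + \za{m}{0} = \za{m+n_\omega}{0}$ (Remark~\ref{rem:a-zeta-differences}) and taking $c_0^\mathcal{U}(\cdot,\idec)$ in the displayed formula yields $e^{\mathbf{P}'}_{\omega, m, k} = e^\mathbf{P}_{\omega, m + n_\omega, k} - e^\mathbf{P}_{\omega, n_\omega, k}$. Substituting into the definition of $\pano^{\mathbf{P}'}(f,0)$, reindexing with $n' = n + n_\omega$ as in the first paragraph, and comparing with $\pano^\mathbf{P}(f,0)$ gives
\[
\pano^{\mathbf{P}'}(f,0) - \pano^\mathbf{P}(f,0) = \sum_{\omega \in \orbita} \sum_{n \in \Z} \sum_{k \geq 1} c_k^\mathcal{U}(f, \alpha_\omega \oplus ns) \cdot e^\mathbf{P}_{\omega, n_\omega, k} = \sum_{\omega \in \orbita} \sum_{k \geq 1} \ores^\mathcal{U}(f,\omega,k) \cdot e^\mathbf{P}_{\omega, n_\omega, k},
\]
which is \eqref{eq:pano-pinning-ar0}. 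The ``in particular'' statement is then immediate from both formulas. The main obstacle is the careful bookkeeping needed to pin down $c_{\omega,k} = -e^\mathbf{P}_{\omega, n_\omega, k}$; a secondary matter is the degenerate case $\alpha_{\Z s} = \idec$ with $n_{\Z s} \neq 0$, where the convention $\varphi^\mathbf{P}_{\Z s, n, 1} = 0$ from Definition-Lemma~\ref{deflem:phia} must be handled by direct verification, but the same structural argument applies since the simple-pole coefficients still match as required.
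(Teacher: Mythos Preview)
Your proposal is correct and follows essentially the same route as the paper's own proof. For \eqref{eq:pano-pinning-ar1} the paper also appeals directly to the definition (your reindexing is just this spelled out), and for \eqref{eq:pano-pinning-ar0} the paper likewise shows that $\varphi^{\mathbf{P}'}_{\omega,0,k}-\varphi^\mathbf{P}_{\omega,n_\omega,k}-d_{\omega,k}^\mathcal{U}\cdot\za{n_\omega}{0}\in\Ca$ is $\tau$-invariant, identifies this constant as $-e^\mathbf{P}_{\omega,n_\omega,k}$ by evaluating $c_0^\mathcal{U}(\cdot,\idec)$, and deduces $e^{\mathbf{P}'}_{\omega,n,k}=e^\mathbf{P}_{\omega,n_\omega+n,k}-e^\mathbf{P}_{\omega,n_\omega,k}$ before substituting; your argument is the same computation with $m$ kept general and then specialized, and your flagging of the degenerate case $\alpha_{\Z s}=\idec$ (which the paper passes over silently) is a reasonable extra precaution.
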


\begin{proof}
We see immediately from Definition~\ref{def:pano-a} that \eqref{eq:pano-pinning-ar1} holds.
Moreover, the function $\varphi^{\mathbf{P}'}_{\omega,0,k}-\varphi^\mathbf{P}_{\omega,n_\omega,k}-d_{\omega,k}^\mathcal{U}\cdot\za{n_\omega}{0}\in\mathcal{L}(0)=\Ca$ is $\tau$-invariant and equal to its degree $0$ term at $\idec$, whence this constant
\begin{align*}
    -e^\mathbf{P}_{\omega,n_\omega,k}&=c_0^{\mathcal{U}}\Bigl(\varphi^{\mathbf{P}'}_{\omega,0,k}-\varphi^\mathbf{P}_{\omega,n_\omega,k}-d^\mathcal{U}_{\omega,k}\cdot\za{n_\omega}{0},\ \idec\Bigr) \\&=c_0^{\mathcal{U}}\Bigl(\varphi^{\mathbf{P}'}_{\omega,n,k}-\varphi^\mathbf{P}_{\omega,n_\omega+n,k}-d^\mathcal{U}_{\omega,k}\cdot\za{n_\omega+n}{n}, \ \idec\Bigr)=e^{\mathbf{P}'}_{\omega,n,k}-e^\mathbf{P}_{\omega,n_\omega+n,k}\end{align*} independently of $n$ (cf.~Remark~\ref{rem:a-zeta-differences} and Remark~\ref{rem:phia}). With this, \eqref{eq:pano-pinning-ar0} follows from the definition \eqref{eq:pano-a-def}.\end{proof}

\begin{lemma}\label{lem:pano-pinning-au} Let $\mathbf{P}=(\mathcal{R},\mathcal{U})$ be a $\tau$-pinning of $\ec$ as in Definition~\ref{def:pinninga}, with $\mathcal{R}=\{\alpha_\omega\}_{\omega\in\orbita}$ and $\mathcal{U}=\{u_\alpha\}_{\alpha\in\ec(\Ca)}$. Let $\mathcal{U}'=\{u_\alpha'\}_{\alpha\in\ec(\Ca)}$ be another choice of $\tau$-compatible set of local uniformizers, and let $h_{\Z s,0},h_{\Z s,1}\in\Ca$ be given by $h_{\Z s,\ell}:=c^{\mathcal{U}'}_{1-\ell}\bigl(u_\idec^{-1},\idec\bigr)$ for $\ell\in\{0,1\}$. Then for $\mathbf{P}':=(\mathcal{R},\mathcal{U}')$ we have
\begin{equation}\label{eq:pano-pinning-au1}
    \pano^{\mathbf{P}'}(f,1) =h_{\Z s,0}\cdot \pano^\mathbf{P}(f,1).\end{equation}Moreover, provided that $\alpha_{\Z s}\neq\idec$, we also have
    \begin{equation}
    \pano^{\mathbf{P}'}(f,0) =\pano^\mathbf{P}(f,0)-h_{\Z s,1}\cdot\pano^\mathbf{P}(f,1),\label{eq:pano-pinning-au0-nice}\end{equation}whereas in the exceptional situation where $\alpha_{\Z s}=\idec$ we have
    \begin{multline}\label{eq:pano-pinning-au0-mean}
    \pano^{\mathbf{P}'}(f,0) =\pano^\mathbf{P}(f,0)-h_{\Z s,1}\cdot\pano^\mathbf{P}(f,1)\\ +\sum_{n\in\Z}\Bigl(c^{\mathcal{U}'}_0(f,ns)-c^{\mathcal{U}}_0(f,ns)\Bigr).
\end{multline}
\end{lemma}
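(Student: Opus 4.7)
The proof divides into two parts. For equation~\eqref{eq:pano-pinning-au1}, I appeal directly to Proposition~\ref{prop:diff-pano}. The key observation is that the invariant differentials $\varpi_\mathcal{U}$ and $\varpi_{\mathcal{U}'}$ given by that proposition are scalar multiples of one another, since $H^0(\ec,\Omega^1_{\ec/\Ca})$ is one-dimensional. The relation $u_\idec = u'_\idec / h_{\Z s}(u'_\idec)$ yields $du_\idec = (1/h_{\Z s, 0})\,du'_\idec + O(u'_\idec)\,du'_\idec$, so that $\varpi_\mathcal{U}$ has leading coefficient $1/h_{\Z s, 0}$ in the $du'_\idec$ basis at $\idec$ while $\varpi_{\mathcal{U}'}$ has leading coefficient $1$, forcing $\varpi_{\mathcal{U}'} = h_{\Z s, 0}\,\varpi_\mathcal{U}$. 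The $\Ca$-linearity of residues then yields \eqref{eq:pano-pinning-au1} immediately.

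For \eqref{eq:pano-pinning-au0-nice} and \eqref{eq:pano-pinning-au0-mean}, the strategy is to apply the $\Ca$-linear functional $c_0^{\mathcal{U}'}(\cdot, \idec)$ to the zeta-expansion from Lemma~\ref{lem:zetaExp-a}, namely $f = \pano^\mathbf{P}(f, 0) + \sum c_k^\mathcal{U}(f, \alpha_\omega \oplus ns)\,F^\mathbf{P}_{\omega, n, k}$, where $F^\mathbf{P}_{\omega, n, k} := \varphi^\mathbf{P}_{\omega, n, k} + d^\mathcal{U}_{\omega, k}\,\za{n}{0}$. Subtracting the result from the $\mathbf{P}'$-analog of the definition $\pano^{\mathbf{P}'}(f, 0) = c_0^{\mathcal{U}'}(f, \idec) - \sum c_k^{\mathcal{U}'}(f, \alpha_\omega \oplus ns) \cdot e^{\mathbf{P}'}_{\omega, n, k}$ produces an identity for $\pano^{\mathbf{P}'}(f, 0) - \pano^\mathbf{P}(f, 0)$ in terms of the quantities $c_0^{\mathcal{U}'}(F^\mathbf{P}_{\omega, n, k}, \idec)$, $e^{\mathbf{P}'}_{\omega, n, k}$, and the $\mathcal{U}$- and $\mathcal{U}'$-coefficients of $f$ (related via Remark~\ref{rem:ores-uniformizers}).

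The central computation is a pole analysis of each $F^\mathbf{P}_{\omega, n, k}$ at $\idec$. Whenever $\alpha_\omega \oplus ns \neq \idec$, the function $F^\mathbf{P}_{\omega, n, k}$ has at most a simple pole at $\idec$ with $c_1^\mathcal{U}(F^\mathbf{P}_{\omega, n, k}, \idec) = -d^\mathcal{U}_{\omega, k}$, and the simple-pole change-of-uniformizer formula $c_0^{\mathcal{U}'}(g, \idec) = c_0^\mathcal{U}(g, \idec) + h_{\Z s, 1}\,c_1^\mathcal{U}(g, \idec)$ then yields $c_0^{\mathcal{U}'}(F^\mathbf{P}_{\omega, n, k}, \idec) = e^\mathbf{P}_{\omega, n, k} - h_{\Z s, 1}\,d^\mathcal{U}_{\omega, k}$. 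The only exceptional indices occur when $\omega = \Z s$ and $n = -\ell$ with $\alpha_{\Z s} = \ell s$: for $k = 1$ cancellations force $F^\mathbf{P}_{\Z s, -\ell, 1}$ to be constant, while for $k \geq 2$ it retains a genuine pole of order $k$ at $\idec$, contributing an extra $h_{\Z s, k}^{(k)}$ term when computing $c_0^{\mathcal{U}'}$. Comparing $e^{\mathbf{P}'}_{\omega, n, k}$ with $e^\mathbf{P}_{\omega, n, k}$ in parallel, by writing $F^{\mathbf{P}'}_{\omega, n, k}$ as an explicit $\Ca$-linear combination of the $F^\mathbf{P}$-functions via the uniqueness characterization in Definition~\ref{deflem:phia} together with the explicit identity $\zeta^{\mathcal{U}'}_{(n, 0)} = h_{\Z s, 0}^{-1}\,\za{n}{0} + n\,h_{\Z s, 1}/h_{\Z s, 0}$, the $e$-contributions cancel out of the difference in the generic case $\alpha_{\Z s} \neq \idec$, leaving precisely $-h_{\Z s, 1}\,\pano^\mathbf{P}(f, 1)$ and yielding \eqref{eq:pano-pinning-au0-nice}.

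In the exceptional case $\alpha_{\Z s} = \idec$ (i.e., $\ell = 0$), the exceptional-index contributions for $k \geq 2$ accumulate at every point $ns$ of the orbit $\Z s$ by $\tau$-invariance, and the combinatorial identity $\sum_{k \geq 1} h_{\Z s, k}^{(k)}\,c_k^\mathcal{U}(f, ns) = c_0^{\mathcal{U}'}(f, ns) - c_0^\mathcal{U}(f, ns)$ (which follows from the full change-of-uniformizer formula in Remark~\ref{rem:ores-uniformizers} applied at $ns$) assembles them into the correction sum in \eqref{eq:pano-pinning-au0-mean}. The main obstacle I foresee is carrying out the detailed bookkeeping for the cross-term cancellations in the generic case, which requires a careful explicit comparison of the ancillary functions $F^{\mathbf{P}'}_{\omega, n, k}$ with $F^\mathbf{P}_{\omega, n, k}$ up to higher-order adjustments derived from Definition~\ref{deflem:phia}.
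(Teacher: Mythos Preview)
Your approach is essentially the same as the paper's, with only a minor reorganization. For \eqref{eq:pano-pinning-au1} you both invoke Proposition~\ref{prop:diff-pano} and the one-dimensionality of $H^0(\ec,\Omega^1_{\ec/\Ca})$ to obtain $\varpi_{\mathcal{U}'}=h_{\Z s,0}\,\varpi_\mathcal{U}$. For \eqref{eq:pano-pinning-au0-nice} and \eqref{eq:pano-pinning-au0-mean}, the paper compares the two zeta-expansions of $f$ (one for $\mathbf{P}$, one for $\mathbf{P}'$) as \emph{functions} and shows their non-constant parts differ by the constant $h_{\Z s,1}\,\pano^\mathbf{P}(f,1)$; you instead apply $c_0^{\mathcal{U}'}(\cdot,\idec)$ to the $\mathbf{P}$-expansion and compare with Definition~\ref{def:pano-a}. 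Both routes hinge on the same three relations you correctly identify: the zeta identity $h_{\Z s,0}\,\zeta^{\mathcal{U}'}_{(n,0)}=\za{n}{0}+n\,h_{\Z s,1}$, the triangular transition $\sum_{j=1}^k h^{(k)}_{\omega,k-j}\,\varphi^{\mathbf{P}'}_{\omega,n,j}=\varphi^\mathbf{P}_{\omega,n,k}-c_0^{\mathcal{U}'}(\varphi^\mathbf{P}_{\omega,n,k},ns)$, and its consequence $\sum_{j=1}^k h^{(k)}_{\omega,k-j}\,d^{\mathcal{U}'}_{\omega,j}=h_{\Z s,0}\,d^\mathcal{U}_{\omega,k}$. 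The ``$e$-contributions cancel'' step you anticipate as the main obstacle is precisely the paper's long chain \eqref{eq:pano-pinning-au0-long-computation-nice}, which uses all three relations plus Corollary~\ref{cor:zetaExp-a}.

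One clarification for the exceptional case: the correction in \eqref{eq:pano-pinning-au0-mean} does not arise from your single exceptional index $(\Z s,-\ell)$ propagating by $\tau$-invariance. Rather, when $\alpha_{\Z s}=\idec$ the ancillary functions $\varphi^\mathbf{P}_{\Z s,n,k}=\hat\varphi^\mathcal{U}_{\Z s,n,k}$ have a \emph{single} pole at $ns$ for \emph{every} $n$, so the constant correction $-c_0^{\mathcal{U}'}(\varphi^\mathbf{P}_{\Z s,n,k},ns)$ picks up the extra term $-h^{(k)}_{\Z s,k}$ at each $n$ (the paper's \eqref{eq:pano-pinning-au0-phihat-constant}), and summing over $n$ gives $\sum_n\sum_k h^{(k)}_{\Z s,k}\,c_k^\mathcal{U}(f,ns)=\sum_n\bigl(c_0^{\mathcal{U}'}(f,ns)-c_0^\mathcal{U}(f,ns)\bigr)$. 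Your endpoint identity is correct; only the mechanism by which the corrections appear at every $ns$ needs this adjustment.
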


\begin{proof} Recall that one can compute $h^{(m)}_{\omega,j}\in\Ca$ for $m,j\geq 0$ and $\omega\in\orbita$ such that each $c_k^{\mathcal{U}'}(\varphi,\alpha)=\sum_{m\geq k}h^{(m)}_{\omega,m-k}\cdot c_m^{\mathcal{U}}(\varphi,\alpha)$ for every $\varphi\in\bk$, $\alpha\in \omega$, and $k\geq 1$ simultaneously as in \eqref{eq:u-coeff-effect} in~Remark~\ref{rem:ores-uniformizers}, where we denoted $h^{(1)}_{\omega,k}=h_{\omega,k}$ consistently with the usage of $h_{\Z s,0}$ and $h_{\Z s,1}$ in the statement of Lemma~\ref{lem:pano-pinning-au}.

Letting $\varpi\in H^0(\ec,\Omega^1_{\ec/\Ca})$ such that the valuation $\nu_\idec(du_\idec-\varpi)\geq 1$ as in Proposition~\ref{prop:diff-pano}, we find that $\varpi':=h_{\Z s,0}\cdot\varpi$ similarly satisfies $\nu_\idec(du_\idec'-\varpi')\geq 1$ (cf.~Remark~\ref{rem:ores-uniformizers}). We obtain \eqref{eq:pano-pinning-au1} by a twofold application of Proposition~\ref{prop:diff-pano}:
\begin{multline*}\pano^{\mathbf{P}'}(f,1)=\sum_{\omega\in\orbita}\sum_{n\in\Z}n\cdot\mathrm{res}(f\varpi',\alpha_\omega\oplus ns)\\=h_{\Z s,0}\sum_{\omega\in\orbita}\sum_{n\in\Z}n\cdot\mathrm{res}(f\varpi,\alpha_\omega\oplus ns)=h_{\Z s,0}\pano^\mathbf{P}(f,1).\end{multline*}
    
 By Lemma~\ref{lem:zetaExp-a}, \eqref{eq:pano-pinning-au0-nice} will follow from the forthcoming computation, valid in case $\alpha_{\Z s}\neq \idec$, that \begin{gather}\label{eq:pano-pinning-au0-proof} \sum_{\omega\in\orbita}\sum_{n\in\Z}\sum_{k\geq 1}c^{\mathcal{U}'}_k(f,\alpha_\omega\oplus ns)\cdot\Bigl(\varphi^{\mathbf{P}'}_{\omega,n,k}+d^{\mathcal{U}'}_{\omega,k}\cdot\zeta^{\mathcal{U}'}_{(n,0)}\Bigr)\\ =h_{\Z s,1}\cdot\pano^\mathbf{P}(f,1)+\sum_{\omega\in\orbita}\sum_{n\in\Z}\sum_{k\geq 1}c^{\mathcal{U}}_k(f,\alpha_\omega\oplus ns)\cdot\Bigl(\varphi^{\mathbf{P}}_{\omega,n,k}+d^{\mathcal{U}}_{\omega,k}\cdot\zeta^{\mathcal{U}}_{(n,0)}\Bigr),\notag\end{gather} where $\varphi_{\omega,n,k}^{\mathbf{P}}$ and $\varphi_{\omega,n,k}^{\mathbf{P}'}$ are as in Definition~\ref{deflem:phia}, $\za{n}{0}$ and $\zeta^{\mathcal{U}'}_{(n,0)}$ are as in Definition~\ref{def:a-zeta-differences}, and $d^\mathcal{U}_{\omega,k}$ and $d^{\mathcal{U}'}_{\omega,k}$ are as in Definition~\ref{def:structural-constants}. To begin, \mbox{we find that}
 
 \begin{equation}\label{eq:zeta-u-effect}
        h_{\Z s,0}\cdot\zeta^{\mathcal{U}'}_{(n,0)}=\za{n}{0}+h_{\Z s,1}\cdot n;
    \end{equation}
    first in the basic case $n=1$ by comparing local expansions at $\Ca ((u_\idec'))$ and using the computations that $c_1^{\mathcal{U}'}(\za{1}{0},s)=h_{\Z s,0}$ and $c_0^{\mathcal{U}'}(\za{1}{0},\idec)=h_{\Z s,1}$ by \eqref{eq:u-coeff-effect} and Definition~\ref{deflem:a-zeta-difference}, and then for arbitrary $n\in\Z$ from the Definition~\ref{def:a-zeta-differences}. Similarly, comparing local expansions in $\Ca((u_{\alpha_\omega\oplus ns}'))$, we find for every $\omega\in\orbita$, $n\in\Z$, and $k\geq 1$, that
    \begin{equation}\label{eq:phi-u-effect}
        \sum_{j=1}^k h^{(k)}_{\omega,k-j}\cdot\varphi^{\mathbf{P}'}_{\omega,n,j}=\varphi^\mathbf{P}_{\omega,n,k}-c^{\mathcal{U}'}_0\bigl(\varphi^\mathbf{P}_{\omega,n,k},\,ns\bigr).
    \end{equation} Indeed, the difference between the two sides of \eqref{eq:phi-u-effect} belongs to $\mathcal{L}([ns])=\Ca$ by \eqref{eq:u-coeff-effect}, and the the condition that each $c^{\mathcal{U}'}_0(\varphi^{\mathbf{P}'}_{\omega,n,j},ns)=0$ in Definition~\ref{deflem:phia} yields the equality.
    Further comparing local expansions in $\Ca((u_{ns}'))$, we obtain from \eqref{eq:phi-u-effect} that \begin{equation}\label{eq:d-u-effect}
    \sum_{j=1}^k h^{(k)}_{\omega,k-j}\cdot d^{\mathcal{U}'}_{\omega,j}=h_{\Z s,0}\cdot d^\mathcal{U}_{\omega,k}.
    \end{equation}

    Provided that we are in the non-exceptional case where $\alpha_\omega\neq\idec$, we compute from \eqref{eq:u-coeff-effect} that the constant correction term in \eqref{eq:phi-u-effect} is \begin{equation}\label{eq:phi-u-effect-constant}
        -c_0^{\mathcal{U}'}\bigl(\varphi^\mathbf{P}_{\omega,n,k},ns\bigr)=h_{\Z s,1}\cdot d^\mathcal{U}_{\omega,k}.
    \end{equation} Therefore, for each $\omega\in\orbita$ such that $\alpha_\omega\neq\idec$ and for each $n\in \Z$, we find that \begin{multline}\label{eq:pano-pinning-au0-long-computation-nice}
    \sum_{k\geq 1} c^{\mathcal{U}'}_k(f,\alpha_\omega\oplus ns)\cdot\Bigl(\varphi^{\mathbf{P}'}_{\omega,n,k}+d^{\mathcal{U}'}_{\omega,k}\cdot\zeta^{\mathcal{U}'}_{(n,0)}\Bigr)\\
      =\sum_{k\geq 1}\left(\sum_{m\geq k} h^{(m)}_{\omega,m-k}\cdot c^{\mathcal{U}}_m(f,\alpha_\omega\oplus ns)\right)\cdot\Bigl(\varphi^{\mathbf{P}'}_{\omega,n,k}+d^{\mathcal{U}'}_{\omega,k}\cdot\zeta^{\mathcal{U}'}_{(n,0)}\Bigr) \\
      =\sum_{k\geq 1}c_k^\mathcal{U}(f,\alpha_\omega\oplus ns)\cdot\sum_{j=1}^k h^{(k)}_{\omega,k-j}\cdot\Bigl(\varphi^{\mathbf{P}'}_{\omega,n,j}+d^{\mathcal{U}'}_{\omega,j}\cdot\zeta^{\mathcal{U}'}_{(n,0)}\Bigr)\\
      =\sum_{k\geq 1}c_k^\mathcal{U}(f,\alpha_\omega\oplus ns)\cdot\Bigl(\varphi^\mathbf{P}_{\omega,n,k}+h_{\Z s,1}d_{\omega,k}^{\mathcal{U}}+d^\mathcal{U}_{\omega,k}\za{n}{0}+d^{\mathcal{U}}_{\omega,k}h_{\Z s,1}n\Bigr),
    \end{multline} where the first equality follows from \eqref{eq:u-coeff-effect}, the second equality is a rearrangement, and the third equality is obtained from \eqref{eq:phi-u-effect}, \eqref{eq:phi-u-effect-constant},  \eqref{eq:d-u-effect}, and \eqref{eq:zeta-u-effect}. Thus, provided that $\alpha_{\Z s}\neq\idec$, summing over all $\omega\in\orbita$ and $n\in\Z$ yields \eqref{eq:pano-pinning-au0-proof}, in view of Corollary~\ref{cor:zetaExp-a}.

    It remains to prove \eqref{eq:pano-pinning-au0-mean} in the exceptional case where $\alpha_{\Z s}=\idec$. With $\hat{\varphi}^\mathcal{U}_{\Z s,n,k}=\varphi^\mathbf{P}_{\omega,n,k}$ and $\hat{\varphi}^{\mathcal{U}'}_{\omega,n,k}=\varphi^{\mathbf{P}'}_{\omega,n,k}$ as in Definition~\ref{deflem:phia}(2), we find from \eqref{eq:u-coeff-effect} that in this case the constant correction term in \eqref{eq:phi-u-effect} is \begin{equation}\label{eq:pano-pinning-au0-phihat-constant}
        -c_0^{\mathcal{U}'}\bigl(\hat{\varphi}^\mathcal{U}_{\Z s,n,k},ns\bigr)=h_{\Z s, 1}\cdot d^\mathcal{U}_{\Z s,k}-h^{(k)}_{\Z s,k},
    \end{equation} so that \eqref{eq:phi-u-effect}, which remains valid as stated in all cases and regardless of whether $\alpha_{\Z s}=\idec$, specializes in this case to
    \begin{equation}\label{eq:phihat-u-effect}
        \sum_{j=1}^k h^{(k)}_{\Z s,k-j}\cdot\hat{\varphi}^{\mathcal{U}'}_{\Z s,n,j}=\hat{\varphi}^\mathcal{U}_{\Z s,n,k}+h_{\Z s,1}\cdot d^\mathcal{U}_{\Z s,k}-h^{(k)}_{\Z s,k}.
    \end{equation} The relations \eqref{eq:zeta-u-effect} and \eqref{eq:d-u-effect} also remain valid, since they do not depend on the choice of $\mathcal{R}$ at all -- although one can also tortuously recover \eqref{eq:d-u-effect} for $\omega=\Z s$ by carefully comparing local $\Ca((u_{ns}'))$-expansions in \eqref{eq:phihat-u-effect} and using the fact that $d^{\mathcal{U}'}_{\Z s,1}=1=d^\mathcal{U}_{\Z s,1}$ by Lemma~\ref{lem:d-r-independence}. Thus the analogous computation to \eqref{eq:pano-pinning-au0-long-computation-nice} in this case yields that, for each $n\in\Z$,
\begin{gather}\label{eq:pano-pinning-au0-long-computation-mean} \sum_{k\geq 1}c^{\mathcal{U}'}_k(f,ns)\cdot\Bigl(\hat{\varphi}^{\mathcal{U}'}_{\Z s,n,k}+d^{\mathcal{U}'}_{\Z s,k}\cdot\zeta^{\mathcal{U}'}_{(n,0)}\Bigr)\\
=\sum_{k\geq 1}c^\mathcal{U}_k(f,ns)\cdot(\hat{\varphi}^\mathcal{U}_{\Z s,n,k}+d^\mathcal{U}_{\Z s,k}\cdot\za{n}{0}+h_{\Z s,1}\cdot(n+1)\cdot d^\mathcal{U}_{\Z s,k} -h^{(k)}_{\Z s,k}\Bigr).\notag
\end{gather} Thus in case $\alpha_{\Z s}=\idec$ we find that instead of \eqref{eq:pano-pinning-au0-long-computation-nice} we obtain
\begin{multline} \label{eq:pano-pinning-au0-long-computation-mean-result}
    \sum_{\omega\in\orbita}\sum_{n\in\Z}\sum_{k\geq 1}c^{\mathcal{U}'}_k(f,\alpha_\omega\oplus ns)\Bigl(\hat{\varphi}^{\mathcal{U}'}_{\omega,n,k}+d^{\mathcal{U}'}_{\omega,k}\cdot\zeta^{\mathcal{U}'}_{(n,0)}\Bigr)\\=h_{\Z s,1}\cdot\pano^\mathbf{P}(f,1)+\sum_{\omega\in\orbita}\sum_{n\in\Z}\sum_{k\geq 1}c^{\mathcal{U}}_k(f,\alpha_\omega\oplus ns)\cdot\Bigl(\hat{\varphi}^{\mathcal{U}'}_{\omega,n,k}+d^{\mathcal{U}}_{\omega,k}\cdot\zeta^{\mathcal{U}}_{(n,0)}\Bigr)\\-\sum_{n\in\Z}\sum_{k\geq 1}h^{(k)}_{\Z s,k}\cdot c^\mathcal{U}_k(f,ns).
\end{multline}The sought relation \eqref{eq:pano-pinning-au0-mean} follows from \eqref{eq:u-coeff-effect} because, for each $n\in\Z$, \[\sum_{k\geq 1}h^{(k)}_{\Z s,k}\cdot c^\mathcal{U}_k(f,ns)=c^{\mathcal{U}'}_0(f,ns)-c_0^\mathcal{U}(f,ns).\qedhere\]
\end{proof}

\begin{corollary}\label{cor:pano-pinning-au}
    With notation as in Lemma~\ref{lem:pano-pinning-au}, suppose $\alpha_{\Z s}=\idec$ and let $h^{(k)}_{\Z s,k}\in\Ca$ for $k\geq 1$ be defined as in Remark~\ref{rem:ores-uniformizers}. Then \begin{multline}\label{eq:pano-pinning-au0-alt}
        \pano^{\mathbf{P}'}(f,0)=\pano^\mathbf{P}(f,0)-h_{\Z s,1}\cdot\pano^\mathbf{P}(f,1)\\+\sum_{k\geq 1}h^{(k)}_{\Z s,k}\cdot\ores^{\mathcal{U}}(f,\Z s,k).
    \end{multline}
\end{corollary}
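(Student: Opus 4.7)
The plan is to deduce this corollary directly from equation \eqref{eq:pano-pinning-au0-mean} of Lemma~\ref{lem:pano-pinning-au} by rewriting the residual sum $\sum_{n\in\Z}\bigl(c^{\mathcal{U}'}_0(f,ns)-c^{\mathcal{U}}_0(f,ns)\bigr)$ in terms of the change-of-uniformizer data from Remark~\ref{rem:ores-uniformizers}. Since the exceptional case $\alpha_{\Z s}=\idec$ is being assumed, the leading term of \eqref{eq:pano-pinning-au0-mean} that we need to recast is exactly this residual sum.

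First I would apply formula \eqref{eq:u-coeff-effect} from Remark~\ref{rem:ores-uniformizers} to the orbit $\omega=\Z s$ at the degree $k=0$. Noting that $(h_{\Z s}(u))^0=1$ forces $h^{(0)}_{\Z s,0}=1$ and $h^{(0)}_{\Z s,j}=0$ for $j\geq 1$, the $m=0$ term in the resulting expansion of $c_0^{\mathcal{U}'}(f,ns)$ is precisely $c_0^{\mathcal{U}}(f,ns)$, so subtracting yields
\begin{equation*}
c^{\mathcal{U}'}_0(f,ns)-c^{\mathcal{U}}_0(f,ns)=\sum_{k\geq 1}h^{(k)}_{\Z s,k}\cdot c^{\mathcal{U}}_k(f,ns)
\end{equation*}
for every $n\in\Z$.

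Next I would sum this identity over all $n\in\Z$ and interchange the order of summation. Since the orbit of $\idec$ under $\tau$ is $\{ns\mid n\in\Z\}=\Z s$, Definition~\ref{def:oresa} gives $\sum_{n\in\Z}c^{\mathcal{U}}_k(f,ns)=\ores^{\mathcal{U}}(f,\Z s,k)$, so
\begin{equation*}
\sum_{n\in\Z}\Bigl(c^{\mathcal{U}'}_0(f,ns)-c^{\mathcal{U}}_0(f,ns)\Bigr)=\sum_{k\geq 1}h^{(k)}_{\Z s,k}\cdot\ores^{\mathcal{U}}(f,\Z s,k).
\end{equation*}
Substituting this into \eqref{eq:pano-pinning-au0-mean} produces \eqref{eq:pano-pinning-au0-alt}, completing the argument.

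There is no real obstacle here: the work has already been done in Lemma~\ref{lem:pano-pinning-au}, and the corollary simply repackages the unwieldy correction term $\sum_n(c^{\mathcal{U}'}_0-c^{\mathcal{U}}_0)(f,ns)$ as a clean linear combination of orbital residues of $f$ along the orbit $\Z s$. The only point requiring momentary care is the vanishing $h^{(0)}_{\Z s,j}=0$ for $j\geq 1$, which isolates the $m=0$ term so that the difference collapses to a sum over $k\geq 1$.
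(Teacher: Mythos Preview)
Your proposal is correct and follows essentially the same route as the paper's proof: both start from \eqref{eq:pano-pinning-au0-mean}, use \eqref{eq:u-coeff-effect} at $k=0$ to rewrite $c^{\mathcal{U}'}_0(f,ns)-c^{\mathcal{U}}_0(f,ns)=\sum_{k\geq 1}h^{(k)}_{\Z s,k}\,c^{\mathcal{U}}_k(f,ns)$, and then sum over $n$ to recognize the orbital residues. The only minor remark is that your care about $h^{(0)}_{\Z s,j}=0$ for $j\geq 1$ is unnecessary, since the formula \eqref{eq:u-coeff-effect} at $k=0$ only involves the diagonal coefficients $h^{(m)}_{\Z s,m}$.
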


\begin{proof}
    The last summation in \eqref{eq:pano-pinning-au0-long-computation-mean-result} can be rearranged in two different ways:\begin{multline*}\sum_{n\in\Z}\Bigl(c^{\mathcal{U}'}_0(f,ns)-c^{\mathcal{U}}_0(f, ns)\Bigr)=\sum_{n\in\Z}\sum_{k\geq 1}h^{(k)}_{\Z s,k}\cdot c^\mathcal{U}_k(f,ns)\\=\sum_{k\geq 1}h^{(k)}_{\Z s,k}\cdot\ores^\mathcal{U}(f,\Z s,k).\qedhere\end{multline*}
\end{proof}

\begin{remark}\label{rem:pano-pinning-a}
    The delicate computations in the proof of Lemma~\ref{lem:pano-pinning-au} all rest on a kind of expansion-contraction principle: the basic change-of-variables expansion of the $c^\mathcal{U}_k(f,\alpha)$ in \eqref{eq:u-coeff-effect} gets absorbed by the ancillary functions $\varphi^\mathbf{P}_{\omega,n,k},\za{n}{0}\in\bk$ and constants $d^\mathcal{U}_{\omega,k}\in\Ca$ engendered by the choice of $\mathbf{P}$, according to \eqref{eq:phi-u-effect}, \eqref{eq:zeta-u-effect}, and \eqref{eq:d-u-effect}, respectively. These special elements privilege the special orbit $\Z s$ among all others, and the special basepoint $\idec$ in particular, in a way that their Weierstrass and Tate counterparts do not (cf.~Remark~\ref{rem:pano-wq-comparison}). This helps explain the emergence of the additional correction terms in \eqref{eq:pano-pinning-au0-mean} whenever it happens that $\alpha_{\Z s}=\idec$. One way to avoid this this would be to disallow $\alpha_{\Z s}=\idec$ in the Definition~\ref{def:pinninga} -- an artificial restriction that we prefer not to make. One could also try to use whatever $\alpha_{\Z s}$ is in place of $\idec$ in the Definition~\ref{deflem:phia} of the $\varphi^\mathbf{P}_{\omega,n,k}$, and subsequently in the corresponding analogues of the $\za{\ell}{m}$ in Definition~\ref{def:a-zeta-differences}, which is probably workable but would have the perverse effect of obscuring the latter elements' independence from the choice of $\mathcal{R}$. These undesirable consequences seem too high a price to pay for the sake of having the conclusion of Lemma~\ref{lem:pano-pinning-au} be independent of the choice of $\mathcal{R}$.
\end{remark}

\subsection{Basic examples} \label{subsec:basic-examples-q}

To conclude this section, we illustrate the above general results with some basic examples. We choose once and for all an arbitrary $\tau$-pinning $\mathbf{P}=(\mathcal{R},\mathcal{U})$ of $\ec$ as in Definition~\ref{def:pinninga}, with $\mathcal{R}=\{\alpha_\omega\}_{\omega\in\orbitq}$ and $\mathcal{U}=\{u_\alpha\}_{\alpha\in\ec(\Ca)}$. Denote by $\varpi\in H^0\bigl(\Omega_{\ec/\Ca},\ec\bigr)$ the unique invariant differential such that $\nu_\idec(du_\idec -\varpi)\geq 1$, as in Proposition~\ref{prop:diff-pano}.

 \begin{example} \label{ex:zeta-difference-a} Consider $\za{\ell}{m}\in\bk$ as in Definition~\ref{def:a-zeta-differences} for some $\ell,m\in\Z$. We see from this definition (see Remark~\ref{rem:a-zeta-differences}) that every $\ores^\mathcal{U}\bigl(\za{\ell}{m},\omega,k)=0$. Denote by $n_\idec\in\Z$ the integer such that $\alpha_{\Z s}=n_\idec s$, so that $js=\alpha_{\Z s}\oplus(j-n_\idec)s$. Then \[\pano^\mathbf{P}\bigl(\za{\ell}{m},1\bigr)=(\ell-n_\idec)\cdot 1\cdot d^\mathcal{U}_{\Z s,1}+(m-n_\idec)\cdot(-1)\cdot d^\mathcal{U}_{\Z s,1}=\ell-m,\] since by Definition~\ref{def:structural-constants} the structural constant $d^\mathcal{U}_{\Z s,1}=1$. Therefore by Theorem~\ref{thm:maina} $\za{\ell}{m}$ cannot be summable unless $\ell=m$, in which case $\za{\ell}{m}=0$. By Lemma~\ref{lem:zetaExp-a}, \begin{multline*}
     \za{\ell}{m}\\ =\pano^\mathbf{P}\bigl(\za{\ell}{m},0\bigr)+1\cdot\Bigl(\varphi^\mathbf{P}_{\Z s,\ell-n_\idec,1}+\za{\ell-n_\idec}{0}\Bigr)-1\cdot\Bigl(\varphi^\mathbf{P}_{\Z s,m-n_\idec,1}+\za{m-n_\idec}{0}\Bigr)\\
     =\pano^\mathbf{P}\bigl(\za{\ell}{m},0\bigr)+\tau^{m-\ell}\bigl(\varphi^\mathbf{P}_{\Z s,m-n_\idec,1}\bigr)-\varphi^\mathbf{P}_{\Z s,m-n_\idec,1}+\tau^{n_\idec}\bigl(\za{\ell}{m}\bigr),
 \end{multline*} where we have used the fact that $\za{\ell-n_\idec}{0}-\za{m-n_\idec}{0}=\za{\ell-n_\idec}{m-n_\idec}$ (see Remark~\ref{rem:a-zeta-differences}) and the fact that $\tau^{m-\ell}(\varphi^\mathbf{P}_{\Z s,m-n_\idec,1})=\varphi^\mathbf{P}_{\Z s,\ell-n_\idec}$ (see Remark~\ref{rem:phia}). It follows that $\pano^\mathbf{P}\bigl(\za{\ell}{m},0)\in\Ca$ is summable, so we must actually have that $\pano^\mathbf{P}\bigl(\za{\ell}{m},0)=0$. \end{example}

 \begin{example}\label{ex:phi-a}
    For $\omega\in\orbita$, $n\in\Z$, and $k\geq 1$, consider the elements $\varphi^\mathbf{P}_{\omega,n,k}\in\bk$ from Definition~\ref{deflem:phia}. Let us also assume for the moment that we do not have both $\omega=\Z s$ and $k=1$, since in this case we would have \[\varphi^\mathbf{P}_{\Z s,n,1}=\za{\ell}{m}+\pano^\mathbf{P}(\varphi^\mathbf{P}_{\Z s,n,1},0)\] for some $m,\ell\in\Z$, which we can reduce to Example~\ref{ex:zeta-difference-a}, except for the computation of $\pano^\mathbf{P}(\varphi^\mathbf{P}_{\Z s,n,1},0)$. Then we have \begin{align*}-d^\mathcal{U}_{\omega,k}=c_1^\mathcal{U}\bigl(\varphi^\mathbf{P}_{\omega,n,k},ns\bigr)&=\ores^\mathcal{U}\bigl(\varphi^\mathbf{P}_{\omega,n,k},\Z s,1);\\
1=c^\mathcal{U}_k\bigl(\varphi^\mathbf{P}_{\omega,n,k},\alpha_\omega\oplus ns\bigr)&=\ores^\mathcal{U}\bigl(\varphi^\mathbf{P}_{\omega,n,k},\omega,k\bigr);
    \end{align*} and every other orbital residue of $\varphi^\mathbf{P}_{\omega,0,k}$ vanishes. But we can already see that $\varphi^\mathbf{P}_{\omega,n,k}$ is not summable, by Theorem~\ref{thm:main} (unless $\omega=\Z s$ and $k=1$, in which case $\alpha_{\Z s}=\idec$ if and only if $\varphi^\mathbf{P}_{\Z s,n,1}=0$ is summable). Moreover, we see that \[\pano^\mathbf{P}\bigl(\varphi^\mathbf{P}_{\omega,n,k},1)=n\cdot1\cdot d^\mathcal{U}_{\omega,k}+n\cdot(-d^\mathcal{U}_{\omega,k})\cdot d^\mathcal{U}_{\Z s,1}=0.\] From now on we allow once again the possibility that $\omega=\Z s$ and $k=1$ simultaneously. By Lemma~\ref{lem:zetaExp-a}, \begin{multline*}\varphi^\mathbf{P}_{\omega,n,k}=\pano^\mathbf{P}\bigl(\varphi^\mathbf{P}_{\omega,n,k},0\bigr)\\ +1\cdot\bigl(\varphi^\mathbf{P}_{\omega,n,k}+d^\mathcal{U}_{\omega,k}\za{n}{0}\bigr)-d^\mathcal{U}_{\omega,k}\cdot\bigl(\varphi^\mathbf{P}_{\Z s,n-n_\idec,1}+d^\mathcal{U}_{\Z s,1}\za{n-n_\idec}{0}\bigr),\end{multline*} where $\alpha_{\Z s}=n_\idec s$, so that $ns=\alpha_{\Z s}\oplus (n-n_\idec)s$. Therefore \begin{multline*}\pano^\mathbf{P}\bigl(\varphi^\mathbf{P}_{\omega,n,k},0\bigr)=d^\mathcal{U}_{\omega,k}\cdot\bigl(\za{n}{n-n_\idec}-\varphi^\mathbf{P}_{\Z s,n-n_\idec,1}\bigr)\\=d^\mathcal{U}_{\omega,k}\cdot\tau^{n_\idec-n}\bigl(\za{n_\idec}{0}-\varphi^\mathbf{P}_{\Z s,0,1}\bigr)=d^\mathcal{U}_{\omega,k}\cdot c^\mathcal{U}\bigl(\za{n_\idec}{0},\idec\bigr).\end{multline*}
\end{example}

 \begin{example} \label{ex:psi-sum-a}For $n\in\Z$, consider the elements
    \[\Psi_n(z):=n\za{1}{0}-\za{n}{0}\in\bk.\] Since the orbital residues and panorbital residues are $\Ca$-linear, we find that $\ores(\Psi_n,\Z\check{s},1)=0$ and $\pano^\mathbf{P}\bigl(\Psi_n,0\bigr)=0$ from Example~\ref{ex:zeta-difference-a}, and \[\pano^\mathbf{P}\bigl(\Psi_n,1\bigr)=n\cdot(1-0)-1\cdot(n-0)=0.\] Therefore by Theorem~\ref{thm:mainq} each $\Psi_n(z)$ is summable. This agrees with the computations \eqref{eq:psia-comp1} and \eqref{eq:psia-comp2} in case $n\neq0,1$, and $\Psi_0(z)=\Psi_1(z)=0$. 
\end{example}

\section{Applications} \label{sec:applications}

\subsection{Reminders and notation}

In this section we continue using the notation and assumptions set out at the beginning of Section~\ref{sec:algebraic}: $\bk$ is the field of rational functions on an elliptic curve $(\ec,\idec)$ over the algebraically closed field $\Ca$, which is still allowed to be of arbitrary characteristic, and $\tau$ is the automorphism of $\bk$ induced by the translation on $\ec$ with respect to a non-torsion $\Ca$-rational point $s\in\ec(\Ca)$. We neither require nor forbid the existence of a Weierstrass uniformization $\ecl$ of $\ec$ identifying $\bk$ with $\bkl$ as in Section~\ref{sec:weierstrass}, nor of a Tate uniformization $\ecq$ of $\ec$ identifying $\bk$ with $\bkq$ as in Section~\ref{sec:tate}. But we shall work in the general algebraic setting, and explain at appropriate intervals how the general results of this section can be made simpler and more concrete if one has access to the complex/rigid analytic tools that are available in those special situations.

Let us choose once and for all an invariant differential $0\neq \varpi\in H^0(\ec,\Omega^1_{\ec/\Ca})$, and denote by $\delta$ the unique dual derivation on $\bk$ such that the differential $df=\delta(f)\varpi$ for each $f\in\bk$. In the Weierstrass setting we choose $\varpi=dz$, so that $\delta=\frac{d}{dz}$, whereas in the Tate setting we choose $\varpi=\frac{dz}{z}$, so that $\delta=z\frac{d}{dz}$.

\subsection{An analogue of the Riemann-Roch Theorem} \label{sec:summable-rr}

Our goal in this subsection is to prove an analogue of the Riemann-Roch Theorem~\ref{thm:rr} for summable elliptic functions in Theorem~\ref{thm:summable-rr}. We begin by defining some useful variants of usual notions, adapted to our context.

\begin{definition}\label{def:tau-divisors} Let $ D:=\displaystyle\sum_{\alpha\in\ec(\Ca)}m_\alpha[\alpha]\in\mathrm{Div}(\ec)$ as in \eqref{eq:divisor-def}.
\begin{enumerate}
    \item The \emph{summable Riemann-Roch space} of $D$ is \[\mathcal{S}(D):=\bigl\{f\in\mathcal{L}(D) \ \big| \ f \ \text{is summable}\bigr\}.\]
    \item The $\tau$\emph{-support} of $D$ is \[\mathrm{supp}_\tau(D):=\{\omega\in\orbita \ | \ m_\alpha\neq 0\ \text{for some} \ \alpha\in\omega\}.\]
    \item For $\omega\in\orbita$, the $\omega$\emph{-component} of $D$ is $D_\omega:=\sum_{\alpha\in\omega}m_\alpha[\alpha]$.
\end{enumerate} 
\end{definition}

We begin with the following basic preliminary result.

\begin{lemma}\label{lem:summable-dispersion} If $f\in\bk$ is summable, then for each $\omega\in\orbita$ the set of poles of $f$ in $\omega$ is either empty or else contains at least two elements.   
\end{lemma}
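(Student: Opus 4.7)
The plan is to derive this as a quick consequence of the necessity direction of Theorem~\ref{thm:maina}, which asserts that every orbital residue of a summable element vanishes. So I would argue by contradiction: suppose $f$ is summable and that in some orbit $\omega\in\orbita$ the set of poles of $f$ consists of a single point $\alpha\in\omega$.

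Fix any $\tau$-compatible set of local uniformizers $\mathcal{U}=\{u_\beta\}_{\beta\in\ec(\Ca)}$ as in Definition~\ref{def:uniformizers}. Since $\alpha$ is the only point of $\omega$ at which $f$ has a pole, the local principal part coefficients satisfy $c_k^\mathcal{U}(f,\beta)=0$ for every $\beta\in\omega-\{\alpha\}$ and every $k\geq 1$. Consequently the orbital residues of $f$ at $\omega$ collapse to
\[
\ores^\mathcal{U}(f,\omega,k) \;=\; \sum_{\beta\in\omega} c_k^\mathcal{U}(f,\beta) \;=\; c_k^\mathcal{U}(f,\alpha) \qquad\text{for every } k\geq 1.
\]
Because $f$ is summable, Theorem~\ref{thm:maina} forces $\ores^\mathcal{U}(f,\omega,k)=0$ for every $k\geq 1$, hence $c_k^\mathcal{U}(f,\alpha)=0$ for every $k\geq 1$. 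By \eqref{eq:laurent-coeffs-a}, this means $f$ is holomorphic at $\alpha$, contradicting our choice of $\alpha$ as a pole of $f$.

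There is no serious obstacle in this argument: the single-pole hypothesis is exactly what is needed to identify an orbital residue with an individual local coefficient, at which point the theorem gives the contradiction directly. The only minor subtlety is that the conclusion is stated independently of the choice of $\mathcal{U}$, but this is automatic since the conclusion concerns only the support of the polar divisor of $f$, which does not depend on $\mathcal{U}$.
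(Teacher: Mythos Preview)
Your argument is correct, and there is no circularity: the necessity direction of Theorem~\ref{thm:maina} that you invoke (summability forces all orbital residues to vanish) is proved in the paper before and independently of Lemma~\ref{lem:summable-dispersion}.

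However, your route is genuinely different from the paper's. The paper gives the classical elementary argument (going back to Abramov): write $f=\tau(g)-g$, pick an orbit $\omega$ in which $g$ has a pole, let $k$ be the maximal pole order of $g$ along $\omega$, and let $m\leq M$ be the extremal indices at which that maximal order is attained. Then $\tau(g)-g$ has poles of order $k$ at both $\alpha_\omega\oplus(m-1)s$ and $\alpha_\omega\oplus Ms$, which are distinct. This is completely self-contained and never mentions orbital residues. Your approach is quicker given that Theorem~\ref{thm:maina} is already in hand, but it leans on that machinery; the paper's approach is more elementary and would survive in contexts where orbital residues have not yet been set up. Either proof is acceptable here.
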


\begin{proof} The following basic argument also dates back to at least \cite{Abramov:1971}. A version adapted to the elliptic context is given in \cite[p.~196]{Dreyfus2018}, which we follow here. Let $\mathcal{R}=\{\alpha_\omega\}_{\omega\in\orbita}$ be a choice of $\tau$-orbit representatives. For $g\in\bk-\Ca$, there exist $\omega\in\orbita$; $m,M\in\Z$ with $m\leq M$; and $k\in\mathbb{N}$; such that: $g$ has poles of order $k$ at $\alpha_\omega\oplus Ms$ and at $\alpha_\omega\oplus ms$; $g$ has no poles of order greater than $k$ anywhere in $\omega$; and every pole of $g$ of order $k$ in $\omega$ is contained in $\{\alpha_\omega\oplus ns \ | \ m\leq n \leq M\}$. Then $\tau(g)$ has a pole of order $k$ at $\alpha_\omega\oplus (m-1)s$; $\tau(g)$ has no poles of order greater than $k$ anywhere in $\omega$; and every pole of $\tau(g)$ of order $k$ in $\omega$ is contained in $\{\alpha_\omega\oplus ns \ | \ m-1\leq n \leq M-1\}$. It follows that $\tau(g)-g$ has poles at $(m-1)s$ and at $Ms$.
\end{proof}

Next is our analogue of the Riemann-Roch Theorem~\ref{thm:rr}. It computes the dimension of the $\Ca$-vector space $\mathcal{S}(D)$ of Definition~\ref{def:tau-divisors} for any effective $D\in\mathrm{Div}(\ec)$.

\begin{theorem}\label{thm:summable-rr} Let $ D=\sum_{\alpha\in\ec(\Ca)}m_\alpha[\alpha]$ be an effective divisor. For $\omega\in\orbita$, let $M_\omega:=\mathrm{max}\{m_\alpha \ | \ \alpha\in\omega\}$. \begin{enumerate}
\item If $\omega\in\orbita$ is such that $|\mathrm{supp}(D_\omega)|\leq 1$ then $\mathcal{S}(D)=\mathcal{S}(D-D_\omega)$.
    \item If $D\neq 0$ and $|\mathrm{supp}(D_\omega)|\geq 2$ for each $\omega\in\mathrm{supp}_\tau(D)$, then
 \[\mathrm{dim}_\mathcal{C}\bigl(\mathcal{S}(D)\bigr)=\mathrm{deg(D)}-1-\sum_{\omega\in\orbita}M_\omega.\]
\end{enumerate}
\end{theorem}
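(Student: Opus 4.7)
The plan is to derive part (1) as an immediate consequence of Lemma~\ref{lem:summable-dispersion}, and to prove part (2) by interpreting $\mathcal{S}(D)$ as the kernel of a residue map on $\mathcal{L}(D)$ and computing its rank explicitly. For part (1), if $\mathrm{supp}(D_\omega)=\emptyset$ then $D_\omega=0$ and the claim is trivial; otherwise $\mathrm{supp}(D_\omega)=\{\alpha_0\}$ for a unique $\alpha_0\in\omega$, and for any $f\in\mathcal{S}(D)$ Lemma~\ref{lem:summable-dispersion} forces the set of poles of $f$ in $\omega$ to be either empty or of cardinality at least $2$; since the only candidate is $\alpha_0$ we conclude $f$ is regular at $\alpha_0$, so $f\in\mathcal{S}(D-D_\omega)$. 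The reverse containment is immediate.

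For part (2), fix any $\tau$-pinning $\mathbf{P}=(\mathcal{R},\mathcal{U})$ as in Definition~\ref{def:pinninga}. Theorem~\ref{thm:maina} identifies $\mathcal{S}(D)$ with the kernel of the $\Ca$-linear map
\[\Phi\colon\mathcal{L}(D)\longrightarrow\Ca^{N+2},\qquad N:=\sum_{\omega\in\orbita}M_\omega,\]
whose coordinates are the orbital-residue functionals $\lambda_{\omega,k}(f):=\ores^\mathcal{U}(f,\omega,k)$ for $\omega\in\mathrm{supp}_\tau(D)$ and $1\leq k\leq M_\omega$ together with $\mu_0(f):=\pano^{\mathbf{P}}(f,0)$ and $\mu_1(f):=\pano^{\mathbf{P}}(f,1)$; all other $\ores^\mathcal{U}(f,\omega,k)$ vanish identically on $\mathcal{L}(D)$ because $c_k^\mathcal{U}(f,\alpha)=0$ whenever $k>m_\alpha$. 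By the Riemann-Roch Theorem~\ref{thm:rr}, $\mathrm{dim}_\Ca\mathcal{L}(D)=\deg(D)$, so it suffices to prove $\mathrm{rank}(\Phi)=N+1$. The upper bound is immediate from Corollary~\ref{cor:zetaExp-a}, which yields the identity $\sum_{\omega,k}d^\mathcal{U}_{\omega,k}\lambda_{\omega,k}\equiv 0$ on $\mathcal{L}(D)$; this is a non-trivial dependence because $M_\omega\geq 1$ for every $\omega\in\mathrm{supp}_\tau(D)$ and $d^\mathcal{U}_{\omega,1}\neq 0$ by Corollary~\ref{cor:d-r-independence}.

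For the matching lower bound, suppose $a_0\mu_0+a_1\mu_1+\sum_{\omega,k}b_{\omega,k}\lambda_{\omega,k}\equiv 0$ on $\mathcal{L}(D)$. Evaluating at any constant $f=c\in\Ca\subseteq\mathcal{L}(D)$ gives $a_0=0$, since $\lambda_{\omega,k}(c)=\mu_1(c)=0$ while $\mu_0(c)=c$ by Definition~\ref{def:pano-a}. The remaining functionals $\mu_1$ and $\lambda_{\omega,k}$ are $\Ca$-linear combinations of the principal-part coefficients $c_k^\mathcal{U}(f,\alpha)$ with $k\geq 1$, so they factor through the principal-part map
\[\pi\colon\mathcal{L}(D)\longrightarrow PP(D):=\bigoplus_{\alpha\in\mathrm{supp}(D)}\Ca^{m_\alpha}.\]
The kernel of $\pi$ consists of globally regular elliptic functions, hence equals $\Ca$, so by dimension count the image of $\pi$ is the codimension-$1$ subspace of $PP(D)$ cut out by the Residue Theorem functional $\mathrm{Res}(f):=\sum_{\omega,k}d^\mathcal{U}_{\omega,k}\lambda_{\omega,k}(f)$. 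Therefore $a_1\mu_1+\sum b_{\omega,k}\lambda_{\omega,k}=\rho\cdot\mathrm{Res}$ on $PP(D)$ for some $\rho\in\Ca$.

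Matching the coefficient of $c_k^\mathcal{U}(f,\alpha_\omega\oplus ns)$ on both sides gives
\[a_1\,n\,d^\mathcal{U}_{\omega,k}+b_{\omega,k}=\rho\,d^\mathcal{U}_{\omega,k}\]
for every $\omega\in\mathrm{supp}_\tau(D)$, every $k$ with $1\leq k\leq M_\omega$, and every $n\in\Z$ such that $\alpha_\omega\oplus ns\in\mathrm{supp}(D)$ with $m_{\alpha_\omega\oplus ns}\geq k$ (setting $b_{\omega,k}:=0$ outside the active range). For $k=1$ the hypothesis $|\mathrm{supp}(D_\omega)|\geq 2$ supplies two distinct admissible values of $n$ in each $\omega\in\mathrm{supp}_\tau(D)$; subtracting the two resulting equations gives $a_1 d^\mathcal{U}_{\omega,1}=0$, forcing $a_1=0$. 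Then $b_{\omega,k}=\rho\,d^\mathcal{U}_{\omega,k}$ throughout, so the relation is $\rho$ times Corollary~\ref{cor:zetaExp-a}; hence $\mathrm{rank}(\Phi)=N+1$ and $\mathrm{dim}_\Ca\mathcal{S}(D)=\deg(D)-1-N$. The main obstacle is this coefficient-matching step, where the hypothesis $|\mathrm{supp}(D_\omega)|\geq 2$ is precisely what provides the variation in $n$ required to separate $\mu_1$ from the linear combinations of $\lambda_{\omega,k}$ already furnished by the Residue Theorem.
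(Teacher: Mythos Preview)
Your proof follows the same strategy as the paper's: identify $\mathcal{S}(D)$ as the common kernel of the (pan)orbital residue functionals on $\mathcal{L}(D)$ and compute the rank of that family. Your organization differs slightly---you carry out a single rank computation via coefficient-matching on the principal-part space $PP(D)$, whereas the paper first passes to the subspace $\mathcal{S}'(D)\subseteq\mathcal{L}(D)$ on which all orbital residues vanish and then shows that $\xi_0=\pano^{\mathbf{P}}(\bullet,0)$ and $\xi_1=\pano^{\mathbf{P}}(\bullet,1)$ are independent there by exhibiting a constant and an element of some $\mathcal{L}([\alpha_\omega\oplus ns]+[\alpha_\omega])$---but the ingredients are the same.

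There is, however, a genuine gap in positive characteristic, and the paper's own proof shares it. In your step ``subtracting the two resulting equations gives $a_1 d^\mathcal{U}_{\omega,1}=0$'', what the subtraction actually yields is $a_1(n_1-n_2)d^\mathcal{U}_{\omega,1}=0$ for the two admissible integers $n_1\neq n_2\in\Z$; when $\mathrm{char}(\Ca)=p>0$ divides $n_1-n_2$ this does not force $a_1=0$. (The paper's analogous step asserts $\xi_1(\zeta')=n\cdot c_1^\mathcal{U}(\zeta,\alpha_\omega\oplus ns)\neq 0$ for some $0\neq n\in\Z$, which fails for the same reason.) In fact the stated formula is false in that situation: in characteristic~$2$ with $D=[\beta]+[\beta\oplus 2s]$ the formula predicts $\dim_\Ca\mathcal{S}(D)=0$, but any nonconstant $g\in\mathcal{L}\bigl([\beta\oplus s]+[\beta\oplus 2s]\bigr)$ has $c_1^\mathcal{U}(g,\beta\oplus s)=c_1^\mathcal{U}(g,\beta\oplus 2s)$ by the Residue Theorem, whence $f:=\tau(g)-g$ is a nonzero summable element of $\mathcal{L}(D)$. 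In characteristic~$0$ your argument is correct as written.
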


\begin{proof} Item (1) follows immediately from Lemma~\ref{lem:summable-dispersion}. Supposing $D\neq 0$ and $|\mathrm{supp}(D_\omega)|\geq 2$ for each $\omega\in\mathrm{supp}_\tau(D)$ as in item (2), let us choose a $\tau$-pinning $\mathbf{P}=(\mathcal{R},\mathcal{U})$ of $\ec$ as in Definition~\ref{def:pinninga} such that $\mathcal{U}=\{u_\alpha\}_{\alpha\in\ec(\Ca)}$ satisfies $\nu_\idec(du_\idec-\varpi)\geq 1$ as in Proposition~\ref{prop:diff-pano}, and such that $\mathcal{R}=\{\alpha_\omega\}_{\omega\in\orbita}$ satisfies $\alpha_\omega\in\mathrm{supp}(D)$ for each $\omega\in\mathrm{supp}_\tau(D)$. 

A general $f\in\mathcal{L}(D)$ is completely described, up to an additive constant, by the $\mathrm{deg}(D)$-many parameters \[\bigl\{c^\mathcal{U}_k(f,\alpha) \ \big| \ \alpha\in\mathrm{supp}(D) \ \text{and} \ 1\leq k\leq m_\alpha\bigr\}.\] These parameters are not free, but rather are subject to a single non-trivial linear relation arising from the the Residue Theorem~\cite[Thm.~III.7.14.2]{Hartshorne}:
\begin{equation}\label{eq:summable-rr-relation}0=\sum_{\alpha\in\ec(\Ca)}\mathrm{res}(f\varpi,\alpha)=   \sum_{\omega\in\orbita}\sum_{k\geq 1}\ores^\mathcal{U}(f,\omega,k)\cdot d^\mathcal{U}_{\omega,k}
\end{equation}
(cf.~Corollary~\ref{cor:zetaExp-a}), where the second equality can also be deduced from the fact that $\mathrm{res}(f\varpi,\alpha)=\sum_{k\geq 1}c^\mathcal{U}_{k}(f,\alpha)\cdot d^\mathcal{U}_{\omega,k}$ for each $\omega\in\orbita$ and $\alpha\in\omega$, as we saw in the proof of Proposition~\ref{prop:diff-pano}. Accounting for the additional datum of the additive constant, with which we can modify any $f$ without changing its $c^\mathcal{U}_k(f,\alpha)$, results in the correct dimension $\mathrm{dim}_\mathcal{C}\bigl(\mathcal{L}(D)\bigr)=\mathrm{deg}(D)-1+1$ according to the Riemann-Roch Theorem~\ref{thm:rr}. It follows that the subspace \begin{gather*}\mathcal{S}'(D):=\bigl\{f\in\mathcal{L}(D) \ \big| \ \ores^\mathcal{U}(f,\omega,k)=0 \ \text{for every} \ \omega\in\orbita \ \text{and} \ k\geq 1\bigr\}\\ \text{has} \qquad\mathrm{dim}_\Ca\bigl(\mathcal{S}'(D)\bigr)=\mathrm{deg}(D)+1-\sum_{\omega\in\orbita}M_\omega.\end{gather*} Thus it remains to show that the $\Ca$-linear functionals $\xi_0(\bullet):=\pano^\mathbf{P}(\bullet,0)$ and $\xi_1(\bullet):=\pano^\mathbf{P}(\bullet,1)$ are linearly independent, considered as elements of the $\Ca$-dual vector space $\mathcal{S}'(D)^*$, since $\mathcal{S}(D)=\mathrm{ker}(\xi_0)\cap\mathrm{ker}(\xi_1)$. To see this, note that any $0\neq c\in\Ca\subset\mathcal{S}'(D)$ has $\xi_1(c)=0$ and $\xi_0(c)=c\neq 0$. Moreover, it follows from our assumptions that, for any given $\omega\in\mathrm{supp}_\tau(D)$, there exists $0\neq n\in\Z$ such that $\mathcal{L}([\alpha_\omega\oplus ns]+[\alpha_\omega])\subseteq \mathcal{L}(D)$. Since every $d^\mathcal{U}_{\omega,1}\neq 0$ by Corollary~\ref{cor:d-r-independence}, it follows from the relation \eqref{eq:summable-rr-relation} that $\ores^\mathcal{U}(\zeta,\omega,1)=0$ for every $\zeta\in\mathcal{L}([\alpha_\omega\oplus ns]+[\alpha_\omega])$, whence $\mathcal{L}([\alpha_\omega\oplus ns]+[\alpha_\omega])\subseteq\mathcal{S}'(D)$. Finally, note that for any $\zeta\in\mathcal{L}([\alpha_\omega\oplus ns]+[\alpha_\omega])-\Ca$ we must have $c^\mathcal{U}_1(\zeta,\alpha_\omega\oplus ns)\neq 0$, and therefore $\zeta':=\zeta-\xi_0(\zeta)$ satisfies $\xi_1(\zeta')=n\cdot c^\mathcal{U}_1(\zeta,\alpha_\omega\oplus ns)\neq 0$ and $\xi_0(\zeta')=0$. This concludes the proof.
\end{proof}

\subsection{Differential integrability and gauge equivalence to constants} \label{sec:integrability}

A linear difference system of order $N\geq 1$ over $(\bk,\tau)$ is one of the form \begin{equation}\label{eq:mat-difeq}
    \tau(Y)=AY\qquad\text{with}\quad A\in\mathrm{GL}_N(\bk),
\end{equation} where $Y$ denotes a column vector of formal unknowns and $\tau$ is applied componentwise. Selecting a cyclic vector in the $(\bk,\tau)$-module associated with~\eqref{eq:mat-difeq} (see \cite[Appendix~B]{hendriks-singer:1999}), one obtains an equivalent homogeneous scalar difference equation of order $N$:\begin{equation}\label{eq:scal-difeq}
    \tau^N(y)+\sum_{i=0}^{N-1}a_i\tau^i(y)=0 \qquad\text{with}\quad a_0,\dots,a_{N-1}\in\bk \quad \text{s.t.} \quad a_0\neq 0.  
\end{equation} On the other hand, given a scalar difference equation \eqref{eq:scal-difeq} one writes its companion system of the form \eqref{eq:mat-difeq} in the usual way (see \cite[p.~5]{vanderput-singer:1997}).

In case $\mathrm{char}(\Ca)=0$, the differential Galois theory of \cite{HardouinSinger2008} associates with \eqref{eq:mat-difeq} a differential Galois group that encodes in its algebraic structure the differential equations over $\bk$ that are satisfied by the solutions to \eqref{eq:mat-difeq}, with respect to the $\tau$-invariant derivation $\delta$ on $\bk$. A fundamental question in this theory is whether the system \eqref{eq:mat-difeq} can be extended to a compatible difference-differential system \begin{equation}\label{eq:mat-integrability}\begin{cases}
    \tau(Y)=AY & A\in\mathrm{GL}_N(\bk);\\
    \delta(Y)=BY & B\in\mathfrak{gl}_N(\bk);
\end{cases}\qquad \text{such that} \quad \delta(A)A^{-1}=\sigma(B)-B.\end{equation} The system \eqref{eq:mat-difeq} is called $\delta$-\emph{integrable} if there exists a $B\in\mathfrak{gl}_N(\bk)$ as in \eqref{eq:mat-integrability}. Similarly, we say that the scalar equation \eqref{eq:scal-difeq} is $\delta$-integrable if its associated companion system as in \eqref{eq:mat-difeq} is $\delta$-integrable, which we see is equivalent to the existence of $b_1,\dots,b_N\in\bk$ such that \begin{equation}\label{eq:scal-integrability}
    a_i\cdot\frac{\delta(a_0)}{a_0}-\delta(a_i)=\tau(b_i)-b_i\qquad\text{for}\quad i=1,\dots,N,
\end{equation} where we write $a_N:=1$. 

Whether testing for the summability of each entry of $\delta(A)A^{-1}$ in \eqref{eq:mat-integrability} or of each $a_i\frac{\delta(a_0)}{a_0}-\delta(a_i)$ in \eqref{eq:scal-integrability}, we see that Theorem~\ref{thm:maina} mediates an immediate equivalence between computing orbital and panorbital residues and deciding $\delta$-integrability of linear difference equations over $\ec$. Our results below go beyond this immediate application of Theorem~\ref{thm:maina}, to show that some of the (pan)orbital obstructions to $\delta$-integrability are redundant already for the simplest first-order (multiplicative) systems and second-order unipotent (additive) systems. This is in concordance with some upcoming interesting results of Charlotte Hardouin and Julien Roques, which they have kindly shared with us in \cite{hardouin-roques:2023} before their formal dissemination, concerning the precise nature of the meromorphic solutions over $\C$ to compatible difference-differential systems \eqref{eq:mat-integrability} of arbitrary order over complex elliptic curves $\ecl$.

Although the differential Galois theory for difference equations of \cite{HardouinSinger2008} is restricted to characteristic zero, we nonetheless choose to address the integrability of difference equations as discussed above in arbitrary characteristic, even though the ``correct'' version of differential integrability of a difference system \eqref{eq:mat-difeq} in arbitrary characteristic $p\geq 0$ should presumably be made relative to a system of (iterative) Hasse-Schmidt derivations $\{\delta^{(k)}\}_{k\in\mathbb{N}}$ (cf.~\cite{iterative}) that is suitably compatible with $\tau$. Even though we do not pursue this path here explicitly, we consider our results below as initial steps in that direction.

\subsubsection{Additive systems}

We begin by addressing the simplest case of a difference system as in \eqref{eq:mat-difeq} of the special form \begin{equation}\label{eq:additive-mat}\tau(Y)=\begin{pmatrix} 1 & f \\ 0 & 1\end{pmatrix}Y,\end{equation} for some $0\neq f\in\bk$, which is equivalent to the companion system for the homogenization of the inhomogeneous first-order difference equation \begin{equation}\label{eq:additive-scal}\tau(y)-y=f.\end{equation} For $A:=\left(\begin{smallmatrix} 1 & f \\ 0 & 1\end{smallmatrix}\right)$ the coefficient matrix in \eqref{eq:additive-mat}, we compute $\delta(A)A^{-1}=\left(\begin{smallmatrix} 0 & \delta(f) \\ 0 & 0\end{smallmatrix}\right)$. Hence the $\delta$-integrability of \eqref{eq:additive-mat} is equivalent to the summability of $\delta(f)$. By Theorem~\ref{thm:maina}, the system \eqref{eq:additive-mat} is $\delta$-integrable if and only if the orbital and panorbital residues of $\delta(f)$ all vanish. Our goal in this subsection is to prove a new characterization of the $\delta$-integrability of \eqref{eq:additive-mat} in Theorem~\ref{thm:additive-integrability} below. 

\begin{theorem}\label{thm:additive-integrability} Let $\mathrm{char}(\Ca)=:p\geq 0$. Let $f\in\bk$, let $\mathcal{U}$ be a $\tau$-compatible set of local uniformizers as in Definition~\ref{def:uniformizers}, and let $d^\mathcal{U}_{\omega,k}\in\Ca$ be the structural constants from Definition~\ref{def:structural-constants}. Then $\delta(f)$ is summable if and only if the following conditions are satisfied.
\begin{enumerate}
    \item $\ores^\mathcal{U}(f,\omega,k)=0$ for every $\omega\in\orbita$ and $k\geq 1$ such that $k\notin p\Z$; and
    \item either $p=0$ or else $\displaystyle \sum_{\omega\in\orbita}\sum_{k\geq 1}\bigl(d^\mathcal{U}_{\omega,k}\bigr)^p\cdot\ores^\mathcal{U}(f,\omega,pk)=0$.
\end{enumerate}
In particular, if $\ores^\mathcal{U}(f,\omega,k)=0$ for every $\omega\in\orbita$ and $k\geq 1$, then $\delta(f)$ is summable, regardless of the characteristic of $\Ca$.
\end{theorem}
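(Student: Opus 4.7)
Plan. By Theorem~\ref{thm:maina}, summability of $\delta(f)$ is equivalent to the simultaneous vanishing of all orbital residues $\ores^\mathcal{U}(\delta(f),\omega,k)$ together with both panorbital residues $\pano^\mathbf{P}(\delta(f),0)$ and $\pano^\mathbf{P}(\delta(f),1)$. The order-$1$ panorbital residue vanishes automatically: since $\delta(f)\varpi=df$ is exact, Proposition~\ref{prop:diff-pano} gives $\pano^\mathbf{P}(\delta(f),1)=\sum_{\omega,n}n\cdot\mathrm{res}(df,\alpha_\omega\oplus ns)=0$. So the task reduces to computing the orbital residues and $\pano^\mathbf{P}(\delta(f),0)$ of $\delta(f)$ in terms of data of $f$.

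For the orbital residues, I use the local identity $\delta(u_\alpha)=1/\lambda_\omega(u_\alpha)$ with $\lambda_\omega(u)=\sum_{k\geq 1}d^\mathcal{U}_{\omega,k}u^{k-1}$, which holds at every $\alpha\in\omega$: this follows from Remark~\ref{rem:diff-pano} at $\alpha=\idec$ together with the identification $\varpi=\lambda_\omega(u_\alpha)du_\alpha$ at each $\alpha\in\omega$ (a consequence of the $\tau$-compatibility of $\mathcal{U}$ and the translation-invariance of $\varpi$, already exploited in the proof of Proposition~\ref{prop:diff-pano}). Differentiating the Laurent expansion of $f$ at $\alpha$ term-by-term then yields $\ores^\mathcal{U}(\delta(f),\omega,k)=-\sum_{\ell\geq k-1}\ell\cdot e^{(\omega)}_{\ell-k+1}\cdot\ores^\mathcal{U}(f,\omega,\ell)$, where $e^{(\omega)}_j$ denote the coefficients of $1/\lambda_\omega(u)$. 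In characteristic $p>0$, the integer factor $\ell$ vanishes in $\Ca$ whenever $p\mid\ell$, so only $\ores^\mathcal{U}(f,\omega,\ell)$ with $\ell\notin p\Z$ contribute. Since $e_0^{(\omega)}=1/d^\mathcal{U}_{\omega,1}$ is nonzero by Corollary~\ref{cor:d-r-independence}, the leading coefficient $-(k-1)e_0^{(\omega)}$ is nonzero whenever $k-1\notin p\Z$, so a descending induction on $\ell$ (feasible because only finitely many orbital residues of $f$ are nonzero) shows that the system $\ores^\mathcal{U}(\delta(f),\omega,k)=0$ for all $k\geq 1$ is equivalent to $\ores^\mathcal{U}(f,\omega,\ell)=0$ for every $\ell\notin p\Z$, which is condition~(1).

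Assuming condition~(1), I address $\pano^\mathbf{P}(\delta(f),0)$ via the reduced form of Remark~\ref{rem:reduced-form-a}, which (after substituting $k\mapsto pk$ in the residue sum) reads $f\equiv\pano^\mathbf{P}(f,0)+\pano^\mathbf{P}(f,1)\za{1}{0}+\sum_{\omega,k\geq 1}\ores^\mathcal{U}(f,\omega,pk)\varphi^\mathbf{P}_{\omega,0,pk}$ modulo summable. Since $\delta$ preserves summability and panorbital residues are $\Ca$-linear, this yields $\pano^\mathbf{P}(\delta(f),0)=\pano^\mathbf{P}(f,1)\cdot A+\sum_{\omega,k\geq 1}\ores^\mathcal{U}(f,\omega,pk)\cdot B_{\omega,k}$, where $A:=\pano^\mathbf{P}(\delta(\za{1}{0}),0)$ and $B_{\omega,k}:=\pano^\mathbf{P}(\delta(\varphi^\mathbf{P}_{\omega,0,pk}),0)$. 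I aim to show (i) $A=0$ and (ii) $B_{\omega,k}=c\cdot(d^\mathcal{U}_{\omega,k})^p$ for a fixed nonzero universal constant $c$, which together make $\pano^\mathbf{P}(\delta(f),0)=0$ equivalent to condition~(2). For (i), both $\delta(\za{1}{0})$ and the summable element $\hat{\varphi}^\mathcal{U}_{\Z s,0,2}-\hat{\varphi}^\mathcal{U}_{\Z s,1,2}$ lie in $\mathcal{L}(2[s]+2[\idec])$ with matching principal parts by direct local computation, and their residual additive constant is shown to vanish via a Residue Theorem bookkeeping argument. For (ii), the key input is the Frobenius identity $\delta((\varphi^\mathbf{P}_{\omega,0,k})^p)=0$ in characteristic $p$: the difference $\varphi^\mathbf{P}_{\omega,0,pk}-(\varphi^\mathbf{P}_{\omega,0,k})^p$ lies in $\mathcal{L}(p[\idec])$ with principal coefficients $-d^\mathcal{U}_{\omega,pk}$ and $-(-1)^p(d^\mathcal{U}_{\omega,k})^p$ at orders $1$ and $p$, and applying the Residue Theorem to this difference yields the identity $d^\mathcal{U}_{\omega,pk}=-(-1)^p\, d^\mathcal{U}_{\Z s,p}(d^\mathcal{U}_{\omega,k})^p$; careful tracking of subleading coefficients through the $p$-th power expansion at $\idec$ then isolates the $(d^\mathcal{U}_{\omega,k})^p$ factor in $B_{\omega,k}$. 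The ``in particular'' statement is then immediate from the iff: vanishing of all $\ores^\mathcal{U}(f,\omega,k)$ makes condition~(1) vacuous and reduces condition~(2) to $0=0$. The hardest step will be (ii), requiring delicate Frobenius-based bookkeeping of subleading coefficient data.
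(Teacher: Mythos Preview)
Your overall architecture is sound and very close to the paper's: reduce via the reduced form, kill $\pano^\mathbf{P}(\delta(f),1)$ by exactness of $df$, handle the orbital residues by local differentiation, and in characteristic $p$ invoke the Frobenius identity $\varphi^\mathbf{P}_{\omega,0,pk}=(\varphi^\mathbf{P}_{\omega,0,k})^p+(d^\mathcal{U}_{\omega,k})^p\hat\varphi^\mathcal{U}_{\Z s,0,p}$ (the paper's Lemma~\ref{lem:phi-p}). Your triangular orbital-residue argument in step~(3) is correct once one verifies $\varpi=\bigl(\sum_{k\geq 1}d^\mathcal{U}_{\omega,k}u_\alpha^{k-1}\bigr)du_\alpha$ at every $\alpha\in\omega$, which does follow from the proof of Proposition~\ref{prop:diff-pano} (combine the direct residue computation of $\varphi^\mathbf{P}_{\omega,n,k}\varpi$ at $\alpha_\omega\oplus ns$ with the Residue Theorem identity proved there).

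There are, however, two genuine gaps.

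\textbf{Gap in step~(i).} Showing that the additive constant separating $\delta(\za{1}{0})$ from the summable element $\hat\varphi^\mathcal{U}_{\Z s,0,2}-\hat\varphi^\mathcal{U}_{\Z s,1,2}$ vanishes is \emph{precisely} the content of Lemma~\ref{lem:delta-zeta-summable}, and the paper proves it by an explicit Weierstrass-equation computation. Your ``Residue Theorem bookkeeping argument'' is not spelled out, and Remark~\ref{rem:delta-zeta-summable} explicitly says the authors do not see how to establish $\pano^\mathbf{P}(\delta(\za{1}{0}),0)=0$ directly from Definition~\ref{def:pano-a}. Unless you have a new idea here, you will need the explicit computation of Lemma~\ref{lem:delta-zeta-summable}.

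\textbf{Gap in step~(ii).} You assert that $c:=\pano^\mathbf{P}\bigl(\delta(\hat\varphi^\mathcal{U}_{\Z s,0,p}),0\bigr)$ is nonzero, but give no argument. Note that $\delta(\hat\varphi^\mathcal{U}_{\Z s,0,p})\in\mathcal{L}(2[\idec])$ with $c_2^\mathcal{U}(\cdot,\idec)=d^\mathcal{U}_{\Z s,p}$, so when $d^\mathcal{U}_{\Z s,p}\neq 0$ this element has a nonzero \emph{orbital} residue, and its order-$0$ panorbital residue $c$ could a priori vanish; your equivalence ``$\pano^\mathbf{P}(\delta(f),0)=0\Leftrightarrow$ condition~(2)'' would then collapse. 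The paper sidesteps this entirely: once (i) gives $\delta(\za{1}{0})$ summable, one has $\delta(f)\equiv S\cdot\delta(\hat\varphi^\mathcal{U}_{\Z s,0,p})$ modulo summable with $S=\sum_{\omega,k}(d^\mathcal{U}_{\omega,k})^p\ores^\mathcal{U}(f,\omega,pk)$; since $\delta(\hat\varphi^\mathcal{U}_{\Z s,0,p})$ has its only pole at $\idec\in\mathcal{R}$, Lemma~\ref{lem:summable-dispersion} gives that $S\cdot\delta(\hat\varphi^\mathcal{U}_{\Z s,0,p})$ is summable iff it is zero, iff $S=0$ (since $\hat\varphi^\mathcal{U}_{\Z s,0,p}\notin\bk^p$). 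This is cleaner and avoids computing $\pano^\mathbf{P}(\cdot,0)$ at all. You should replace your $c\neq 0$ claim with this argument.
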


The proof of this result will rely on some earlier results and ideas developed in \cite[Appendix~B]{Dreyfus2018}, which was the first systematic study of the differential properties of solutions to difference equations over elliptic curves, but is restricted to characteristic zero. Below we give a compact summary of these results and methods, which of them do or do not extend to arbitrary characteristic, and how they compare with our results and methods.

\begin{theorem}[\protect{\cite[Prop.~B.5 \& Prop.~B.8]{Dreyfus2018}}]\label{thm:dhrs-additive}
    Suppose $\mathrm{char}(\Ca)=0$. Let $f\in\bk$ and let $\mathcal{U}$ be a $\tau$-compatible set of local uniformizers as in Definition~\ref{def:uniformizers}. The following statements are equivalent.
    \begin{enumerate}
        \item There exists $0\neq \mathcal{D}\in\Ca[\delta]$ such that $\mathcal{D}(f)$ is summable.
        \item There exist $\alpha\in\ec(\Ca)$ and $f^*\in\mathcal{L}([\alpha\oplus s]+[\alpha])$ such that $f^*-f$ is summable.
        \item $\ores^\mathcal{U}(f,\omega,k)=0$ for every $\omega\in\orbita$ and $k\geq 1$.
    \end{enumerate}
\end{theorem}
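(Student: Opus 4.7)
My plan is to prove the cycle $(3)\Rightarrow(2)\Rightarrow(1)\Rightarrow(3)$. For $(3)\Rightarrow(2)$, I would invoke Remark~\ref{rem:reduced-form-a}: when every orbital residue of $f$ vanishes, the reduced form collapses to $\bar{f} = \pano^{\mathbf{P}}(f,0) + \pano^{\mathbf{P}}(f,1)\cdot \za{1}{0} \in \mathcal{L}([s]+[\idec])$, with $\bar f - f$ summable, so $f^* := \bar f$ and $\alpha := \idec$ work.

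The remaining two implications both hinge on a local identity for the action of $\delta$ on orbital residues. A direct Laurent-series computation at $\alpha \in \omega$, together with the $\tau$-compatibility of $\mathcal{U}$ and $\delta\tau = \tau\delta$ (which ensure that the coefficients $a_j(\omega) \in \Ca$ appearing in $\delta(u_\alpha) = \sum_{j\geq 0} a_j u_\alpha^j$ depend only on the orbit $\omega$), will give
\[\ores^{\mathcal{U}}(\delta(g), \omega, m) = -\sum_{j\geq 0}(m-1+j)\cdot a_j(\omega)\cdot \ores^{\mathcal{U}}(g, \omega, m-1+j)\]
for every $g \in \bk$, $\omega \in \orbita$, and $m \geq 1$ (the $j=0$ term at $m=1$ is annihilated by its $(m-1+j)=0$ coefficient). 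Consequently $\delta$, and hence every $\mathcal{D}\in\Ca[\delta]$, preserves the vanishing of all orbital residues.

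For $(2)\Rightarrow(1)$: since $\delta$ commutes with $\tau$, every $\mathcal{D}\in\Ca[\delta]$ preserves summability, so it suffices to produce $0\neq \mathcal{D}$ with $\mathcal{D}(f^*)$ summable. Choose any non-constant $\zeta \in \mathcal{L}([\alpha\oplus s]+[\alpha])$; by Corollary~\ref{cor:zetaExp-a} together with $d^{\mathcal{U}}_{\omega,1}\neq 0$ from Corollary~\ref{cor:d-r-independence}, all orbital residues of $\zeta$ vanish. Writing $f^* = b_0 + b_1\zeta$, the three iterates $\delta(\zeta), \delta^2(\zeta), \delta^3(\zeta)$ still lie in the subspace with vanishing orbital residues, which by Theorem~\ref{thm:maina} embeds modulo summables into $\Ca^2$ via its two panorbital residues. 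So the three iterates are linearly dependent modulo summables, yielding $0\neq\mathcal{D}\in\delta\cdot\Ca[\delta]$ with $\mathcal{D}(\zeta)$ summable. Since such a $\mathcal{D}$ annihilates the constant $b_0$, $\mathcal{D}(f^*) = b_1\mathcal{D}(\zeta)$ is summable.

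For $(1)\Rightarrow(3)$: fix an orbit $\omega$ and regard the finitely supported sequence $v_k := \ores^{\mathcal{U}}(f,\omega,k)$ as an element of $\bigoplus_{k\geq 1}\Ca$. The identity above realizes $\delta$ on such sequences as an upper-Hessenberg operator $T_\omega$ whose subdiagonal entries $-(m-1)\cdot a_0(\omega)$ are all nonzero in characteristic zero (since $u_\alpha$ is a uniformizer, $a_0(\omega) = \delta(u_\alpha)(\alpha)\neq 0$). An induction then shows that if $v$ has top support index $K$, then $T_\omega^n v$ has top support index $K+n$ with leading coefficient $v_K\cdot\prod_{j=K}^{K+n-1}(-j\cdot a_0(\omega))\neq 0$. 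For $\mathcal{D} = \sum_{j=0}^{N}c_j\delta^j$ with $c_N\neq 0$, only the degree-$N$ summand contributes at position $K+N$, so $\mathcal{D}(T_\omega) v \neq 0$. Summability of $\mathcal{D}(f)$ would force $\mathcal{D}(T_\omega) v = 0$ for every $\omega$, hence $v = 0$, proving (3). The characteristic-zero hypothesis is indispensable exactly here, where the integer factors $(m-1)$ along the subdiagonal must be invertible; verifying this Hessenberg-injectivity carefully is where I expect the main technical effort to lie.
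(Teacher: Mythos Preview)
Your argument is correct and follows the same overall cycle as the proof sketched in Remark~\ref{rem:dhrs-additive} (which summarizes the original argument from \cite{Dreyfus2018}; the paper itself does not prove Theorem~\ref{thm:dhrs-additive} independently). The step $(3)\Rightarrow(2)$ via the reduced form is exactly what the paper describes.

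The one genuine variation is in $(2)\Rightarrow(1)$. The argument in \cite{Dreyfus2018}, as recounted in Remark~\ref{rem:dhrs-additive}, reduces $\delta(f^*)$ back to an element of the two-dimensional Riemann--Roch space $\mathcal{L}([\alpha\oplus s]+[\alpha])$ itself, obtaining $\delta(f^*)\equiv cf^*+c'$ modulo summables and hence $\mathcal{D}=\delta^2-c\delta$ of degree at most~$2$. You instead invoke Theorem~\ref{thm:maina} to embed the zero-orbital-residue elements modulo summables into $\Ca^2$ via the two panorbital residues, and then use three iterates $\delta(\zeta),\delta^2(\zeta),\delta^3(\zeta)$ to force a dependence, yielding $\mathcal{D}\in\delta\cdot\Ca[\delta]$ of degree at most~$3$. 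Your route is slightly less sharp on the degree but more conceptual, and it illustrates nicely how the panorbital residues package the obstruction. (You could recover degree~$\leq 2$ by using $\zeta,\delta(\zeta),\delta^2(\zeta)$ and then applying $\delta$ once more to kill any constant term in the resulting relation.)

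Your $(1)\Rightarrow(3)$ via the explicit upper-Hessenberg action of $\delta$ on the orbital-residue sequence is clean and correct; the key point that the subdiagonal entries $-(m-1)a_0(\omega)$ are nonzero for $m\geq 2$ in characteristic zero is exactly where the hypothesis is used, and your leading-support induction is the right way to see injectivity of every nonzero $\mathcal{D}(T_\omega)$. The paper's Remark~\ref{rem:dhrs-additive} does not spell out this direction, so your formulation is a useful explicit complement.
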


\begin{remark}\label{rem:dhrs-additive} The implications (3) $\Leftrightarrow$ (2) $\Rightarrow$ (1) in Theorem~\ref{thm:dhrs-additive} remain valid in arbitrary characteristic, with the same proofs as those given in \cite{Dreyfus2018}.

In fact, the reduced form $\bar{f}\in\mathcal{L}([s]+[\idec])$ for $f$ that we compute explicitly in the course of our proof of Theorem~\ref{thm:maina} (cf.~Remark~\ref{rem:reduced-form-a}) is produced by a kind of distillation of the reduction procedure developed in \cite{Dreyfus2018} to establish the equivalence (3) $\Leftrightarrow$ (2) in the proof of \cite[Prop.~B.8]{Dreyfus2018}. Neither reduction procedure relies on their standing assumption in \cite[Appendix~B]{Dreyfus2018} that $\mathrm{char}(\Ca)=0$.

Moreover, the argument given in \cite[p.~198]{Dreyfus2018} for the implication \mbox{(2) $\Rightarrow$ (1)} proves even more than it states: namely, that in (1) one can take either $\mathcal{D}=\delta$ or $\mathcal{D}=\delta^2-c\delta$ for some $c\in\Ca$. They show this as follows. The vanishing of all the orbital residues of $f$ implies the same property for their $\delta(f^*)$ (this is easy to see in characteristic zero, and not much more difficult to see in positive characteristic). Further reducing $\delta(f^*)^*\in\mathcal{L}([\alpha\oplus s]+[\alpha])$, they obtain a $\Ca$-linear combination of $f^*$ and $1$ (by the Riemann-Roch Theorem~\ref{thm:rr}), whence differentiating this linear relation results in the desired $\mathcal{D}=\delta^2-c\delta$ such that $\mathcal{D}(f)$ is summable. In the particular situation when their $f^*\in\Ca$ (i.e., if all the orbital residues of $f$ vanish as well as its first-order panorbital residue all vanish -- see again Remark~\ref{rem:reduced-form-a}), then they also obtain the better $\mathcal{D}=\delta$ in our Theorem~\ref{thm:additive-integrability}, since in that case $\delta(f^*)=0$ is already summable. Thus the implication (2) $\Rightarrow$ (1) proved in \cite[Prop.~B.5]{Dreyfus2018} indeed remains valid in arbitrary characteristic. Of course, the implication (1) $\Rightarrow$ (2) proved in \cite[Prop.~B.5]{Dreyfus2018} when $\mathrm{char}(\Ca)=0$ would only be weakened by adjoining to (1) any additional specifications for $\mathcal{D}$.

The implication (1) $\Rightarrow$ (2) is not valid in positive characteristic. The most immediate counterexample is over a Tate curve $\ecq$ as in Section~\ref{sec:tate} with $\mathrm{char}(\Ca)=2$: for $\wp_q\in\bkq$ defined as in \eqref{eq:q-wp-def}, $\delta(\wp_q)+\wp_q=0$ but $\ores(\wp_q,\Z\check{s},2)=1$. More generally, still over a Tate curve $\ecq$ and with $\mathrm{char}(\Ca)=p\geq 2$, the element $\zqd{p-1}\in\bkq$ specified in Definition~\ref{def:q-zeta-fake-derivatives} satisfies $\delta\bigl(\zqd{p-1}\bigr)+\zqd{p-1}=0$ and $\ores\bigl(\zqd{p-1},\Z\check{s},p\bigr)=1$, as we see from \eqref{eq:q-zeta-coeffs}.

Thus our Theorem~\ref{thm:additive-integrability} represents a best-possible extension of (1) $\Leftrightarrow$ (3) in Theorem~\ref{thm:dhrs-additive} to arbitrary characteristic, as well as a sharpening of the implication (3) $\Rightarrow$ (1) already in characteristic zero.
\end{remark}

The proof of Theorem~\ref{thm:additive-integrability} below will be streamlined with the aid of several basic computational results. The first, Lemma~\ref{lem:delta-zeta-summable} below, represents a special but crucial case of Theorem~\ref{thm:additive-integrability} (cf.~Example~\ref{ex:zeta-difference-l}, Example~\ref{ex:zeta-difference-q}, and Example~\ref{ex:zeta-difference-a}). It is the source of our sharpening of the implication (3)~$\Rightarrow$~(1) in Theorem~\ref{thm:dhrs-additive} when $\mathrm{char}(\Ca)=0$ (see Remark~\ref{rem:dhrs-additive}).

\begin{lemma} \label{lem:delta-zeta-summable}
    For any $\alpha\in\ec(\Ca)$ and for any $\zeta\in\mathcal{L}([\alpha\oplus s]+[\alpha])$, $\delta(\zeta)$ is summable.
\end{lemma}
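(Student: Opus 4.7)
My plan is to reduce the statement to a single canonical instance and then verify summability via Theorem~\ref{thm:maina}.

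Because $\delta$, $\tau$, and summability all commute with the translation automorphism $\mathcal{T}_{\ominus\alpha}$ of $\bk$ --- the derivation $\delta$ commutes because $\varpi$ is translation-invariant, hence so is its dual derivation --- I first replace $\zeta$ by $\mathcal{T}_{\ominus\alpha}(\zeta)\in\mathcal{L}([s]+[\idec])$ to reduce to $\alpha=\idec$. By the Riemann-Roch Theorem~\ref{thm:rr} this space equals $\Ca\oplus\Ca\cdot\za{1}{0}$ with $\za{1}{0}$ as in Definition-Lemma~\ref{deflem:a-zeta-difference}, and since $\delta$ annihilates constants it suffices to prove that $f:=\delta(\za{1}{0})$ is summable.

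To do so I choose a $\tau$-pinning $\mathbf{P}=(\mathcal{R},\mathcal{U})$ with $\alpha_{\Z s}=\idec$ and check each vanishing criterion of Theorem~\ref{thm:maina}. The observation driving every check is that $f\varpi=d\za{1}{0}$ is an exact differential, so $\mathrm{res}(f\varpi,\beta)=0$ for every $\beta\in\ec(\Ca)$; Proposition~\ref{prop:diff-pano} then immediately yields $\pano^\mathbf{P}(f,1)=0$. For the orbital residues, only the orbit $\omega=\Z s$ can contribute, so I compute the local expansions of $f$ at $\idec$ and at $s$ directly, using the normalizations of $\za{1}{0}$ from Definition-Lemma~\ref{deflem:a-zeta-difference} together with $\delta(u_\alpha)=1-d^\mathcal{U}_{\Z s,2}\,u_\alpha+\cdots$ read off from $\varpi=(1+d^\mathcal{U}_{\Z s,2}\,u_\alpha+\cdots)du_\alpha$. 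Because this expansion is the same at $\alpha=\idec$ and at $\alpha=s$ by $\tau$-compatibility of $\mathcal{U}$ and translation-invariance of $\varpi$, I expect $c^\mathcal{U}_k(f,s)=-c^\mathcal{U}_k(f,\idec)$ for $k\in\{1,2\}$, whence every $\ores^\mathcal{U}(f,\omega,k)$ vanishes.

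The crux is the vanishing of $\pano^\mathbf{P}(f,0)$, which I plan to establish by exhibiting $f$ as a manifestly summable element. With $\wp:=\hat\varphi^\mathcal{U}_{\Z s,0,2}\in\mathcal{L}(2[\idec])$ from Definition-Lemma~\ref{deflem:phia}(2), the element $\wp-\tau^{-1}(\wp)\in\mathcal{L}(2[s]+2[\idec])$ equals $\tau(h)-h$ with $h:=\tau^{-1}(\wp)$, hence is summable by inspection. The local computation above shows that $f$ and $\wp-\tau^{-1}(\wp)$ agree in $c^\mathcal{U}_1$ and $c^\mathcal{U}_2$ at both $\idec$ and $s$, so their difference is a constant $c\in\Ca$; by the $\Ca$-linearity of $\pano^\mathbf{P}(\cdot,0)$ and the summability of $\wp-\tau^{-1}(\wp)$, this constant equals $\pano^\mathbf{P}(f,0)$. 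The main obstacle is thus to prove $c=0$: my primary attempt is to match the constant terms $c^\mathcal{U}_0(\cdot,\idec)$ on both sides, exploiting the normalizations $c^\mathcal{U}_0(\za{1}{0},\idec)=0=c^\mathcal{U}_0(\wp,\idec)$ and carefully tracking how the higher-order Laurent tail of $\za{1}{0}$ at $\idec$ interacts with the full expansion of $\delta(u_\idec)$ upon differentiation. A fallback is to verify the identity $\delta(\za{1}{0})=\wp-\tau^{-1}(\wp)$ concretely in the Weierstrass setting --- where it reads $\wpl(z)-\wpl(z-s)$ and follows from $\zl'=-\wpl$ --- and then invoke a base-change or universality argument to promote it to the general algebraic setting.
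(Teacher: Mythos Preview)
Your reduction to $\alpha=\idec$ and to a single non-constant $\zeta\in\mathcal{L}([s]+[\idec])$ matches the paper's first step, and your checks that all orbital residues of $f:=\delta(\za{1}{0})$ vanish and that $\pano^\mathbf{P}(f,1)=0$ via exactness of $f\varpi=d\za{1}{0}$ are correct --- indeed, the paper's own Remark~\ref{rem:delta-zeta-summable} carries out precisely these observations.

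The genuine gap is at $\pano^\mathbf{P}(f,0)$. That same Remark~\ref{rem:delta-zeta-summable} states that the authors ``do not see how to prove this in general directly from our Definition~\ref{def:pano-a},'' and your proposal does not close the gap. Your primary attempt (matching $c^\mathcal{U}_0(\cdot,\idec)$) unwinds to the identity $a_1+(d^\mathcal{U}_{\Z s,2})^2-d^\mathcal{U}_{\Z s,3}+\wp(\ominus s)=0$, where $a_1$ is the $u_\idec$-coefficient of $\za{1}{0}$; this involves curve-specific data that the normalizations $c^\mathcal{U}_0(\za{1}{0},\idec)=0=c^\mathcal{U}_0(\wp,\idec)$ alone do not pin down. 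Your fallback is also insufficient: the complex-analytic relation $\zl'=-\wpl$ together with a vague ``base-change or universality argument'' does not transport to positive characteristic, and the lemma is needed in arbitrary characteristic.

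The paper circumvents this by abandoning the $\mathcal{U}$-normalized $\za{1}{0}$ and instead choosing an explicit $\zeta=c^{-1}(\wp'-t)/(\wp-r)$ built from Weierstrass coordinates $\wp,\wp'$ satisfying \eqref{eq:alg-weierstrass-eq}, with $r=\wp(s)$ and $t=\wp'(\ominus s)$. A short purely algebraic manipulation using the addition formula \eqref{eq:delta-zeta-summable-tau} and the Weierstrass relation $W(\wp,\wp')=0$ yields the exact identity $\delta(\zeta)=\wp-\tau^{-1}(\wp)$ over any field, so $\delta(\zeta)$ is manifestly summable. The insight you are missing is that, since it suffices to treat any one non-constant $\zeta\in\mathcal{L}([s]+[\idec])$, one should pick the $\zeta$ that makes the identity exact on the nose rather than the abstractly normalized $\za{1}{0}$, which forces the very constant you then cannot kill.
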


\begin{proof}  It suffices to prove this for any one single $\zeta\in\mathcal{L}([s]+[\idec])-\Ca$. To see that the general case follows from this special one, denote by $\mathcal{T}_\alpha$ the automorphism of $\bk$ induced by the translation by $\alpha$ on $\ec$ (analogously to how $\tau$ is defined relative to $s$, so that $\mathcal{T}_s=\tau$). Note that for any $\zeta'\in\mathcal{L}([\alpha\oplus s]+[\alpha])$ as in the statement, $\mathcal{T}_\alpha(\zeta')=a\zeta+b$ for some $a,b\in\Ca$, with respect to any given choice of non-constant $\zeta\in\mathcal{L}([s]+[\idec])$, by the Riemann-Roch Theorem~\ref{thm:rr}. Therefore if $\delta(\zeta)$ is summable then so is the arbitrary $\delta(\zeta')$, since the automorphism $\mathcal{T}_\alpha$ commutes with $\delta$ and with $\tau$.

The special case results from an elementary computation, which we abridge below for the reader's benefit. For $\wp\in\mathcal{L}(2[\idec])-\Ca$ and $\wp'\in\mathcal{L}(3[\idec])-\mathcal{L}(2[\idec])$ satisfying \eqref{eq:alg-weierstrass-eq}:\begin{equation}\label{eq:delta-zeta-weierstrass}W(\wp,\wp'):=(\wp')^2+a_1\wp\wp'+a_3\wp'-\wp^3-a_2\wp^2-a_4\wp-a_6=0,\end{equation} let us write $\wp(\ominus s)=\wp( s)=:r\in\Ca$ and $-\wp'(s)-a_1\wp(s)-a_3=\wp'(\ominus s)=:t\in\Ca$, so that by \cite[\S III.2]{SilvermanIntro}: \begin{equation}\label{eq:delta-zeta-summable-tau}\tau^{-1}(\wp)=\left(\frac{\wp'-t}{\wp-r}\right)^2 +a_1\left(\frac{\wp'-t}{\wp-r}\right)-a_2-\wp-r.\end{equation}
Moreover, by \cite[Thm.~A1.1]{LangElliptic} there exists $0\neq c\in\Ca$ such that \begin{equation}\label{eq:delta-zeta-summable-derivatives}c\cdot\delta(\wp)=2\wp'+a_1\wp+a_3\quad\text{and}\quad c\cdot\delta(\wp')=3\wp^2+2a_2\wp+a_4-a_1\wp'.\end{equation}
We see that $\mathrm{div}(\wp-r)=[s]+[\ominus s]-2[\idec]$, and we compute similarly that $\mathrm{div}(\wp'-t)=[\ominus s]+[\beta]+[s\ominus\beta]-3[\idec]$ for some irrelevant $\beta\in\ec(\Ca)-\{\idec,s\}$ (because $\nu_\idec(\wp'-t)=-3$ for any $t\in\Ca$). It follows that \begin{equation}\label{eq:delta-zeta-summable-zdef}\zeta:=c^{-1}\cdot\frac{\wp'-t}{\wp-r}\in\mathcal{L}([s]+[\idec])-\Ca.\end{equation} Finally, computing $\delta(\zeta)$ given as in \eqref{eq:delta-zeta-summable-zdef} by means of the action of $\delta$ specified in \eqref{eq:delta-zeta-summable-derivatives}, and using the formula for $\tau^{-1}(\wp)$ in \eqref{eq:delta-zeta-summable-tau}, we find that \[\delta(\zeta)-\wp+\tau^{-1}(\wp)=\frac{W(r,t)-W(\wp,\wp')}{(\wp-r)^2}=0.\]
Or equivalently, $\delta(\zeta)=\wp-\tau^{-1}(\wp)$, which is indeed summable.\end{proof}

\begin{remark}\label{rem:delta-zeta-summable} The computational result of Lemma~\ref{lem:delta-zeta-summable} is much easier to see if one is able and willing to appeal to a complex analytic uniformization $\ecl$ as in Section~\ref{sec:weierstrass} or a rigid analytic uniformization $\ecq$ as in Section~\ref{sec:tate}. This is due to the more straightforward computations in these cases: \begin{align*}
\frac{d}{dz}\bigl(\zl(z-\alpha-s)-\zl(z-\alpha)\bigr) &=\wpl(z-\alpha)-\wpl(z-\alpha-s);\\
z\frac{d}{dz}\Bigl(\zq\bigl(s^{-1}\alpha^{-1}z\bigr)-\zq\bigl(\alpha^{-1}z\bigr)\Bigr)&=\wp_q\bigl(s^{-1}\alpha^{-1}z\bigr)-\wp_q(\alpha^{-1}z\bigr);\end{align*}
(see \eqref{eq:zeta-wp-defs} and Lemma~\ref{lem:l-zeta-difference}, and \eqref{eq:q-wp-def} and Lemma~\ref{lem:q-zeta-difference}, respectively).

It would seem desirable to have a more conceptual proof of Lemma~\ref{lem:delta-zeta-summable}, say by applying Theorem~\ref{thm:maina} to $f=\delta\bigl(\za{1}{0}\bigr)$, for the $\za{1}{0}\in\mathcal{L}([s]+[\idec])$ in Definition~\ref{deflem:a-zeta-difference}, instead of the $\zeta$ defined in \eqref{eq:delta-zeta-summable-zdef}. This approach encounters some difficulties that we do not know how to overcome. We may choose a $\tau$-pinning $\mathbf{P}=(\mathcal{R},\mathcal{U})$ of $\ec$ as in Definition~\ref{def:pinninga} such that $\alpha_{\Z s}=\idec$ and $\nu_\idec(\varpi-du_\idec)\geq 1$ (cf.~Proposition~\ref{prop:diff-pano}). The $\tau$-compatibility $\tau(u_s)=u_\idec$ easily implies that $c^\mathcal{U}_k\Bigl(\delta\bigl(\za{1}{0}\bigr),s\Bigr)=-c^\mathcal{U}_k\Bigl(\delta\bigl(\za{1}{0}\bigr),\idec\Bigr)$ for $k=1,2$. Thus, since $\delta\bigl(\za{1}{0}\bigr)\in\mathcal{L}(2[s]+2[\idec])$, every orbital residue of $\delta\bigl(\za{1}{0}\bigr)$ vanishes. Moreover, Proposition~\ref{prop:diff-pano} implies that $\pano^\mathbf{P}\Bigl(\delta\bigl(\za{1}{0}\bigr),1\Bigr)=0$ also, since every residue of the exact differential $d\za{1}{0}=\delta\bigl(\za{1}{0}\bigr)\varpi$ vanishes. In any one example of $\ec$ and $\tau$-pinning $\mathbf{P}$ as above, one could verify the (necessarily true) fact that $\pano^\mathbf{P}\Bigl(\delta\bigl(\za{1}{0}\bigr),0\Bigr)=0$ also. But we do not see how to prove this in general directly from our Definition~\ref{def:pano-a}. What makes results like Theorem~\ref{thm:additive-integrability} useful, both in theory and in practice, is that they allow one to bypass the consideration of the order $0$ panorbital residue, which is by far the most complicated of all the (pan)orbital obstructions to summability.\end{remark}

The following Lemma~\ref{lem:p-power-basic-algebraic} and its consequence Lemma~\ref{lem:phi-p} are only needed to prove Theorem~\ref{thm:additive-integrability} in case $\mathrm{char}(\Ca)>0$.

    \begin{lemma}\label{lem:p-power-basic-algebraic} Suppose $\mathrm{char}(\Ca)=:p>0$. Let $f\in\bke$, $\alpha\in\Ca^\times$, and $\omega\in\orbita$.
    \begin{enumerate}
        \item $c_{pk}(f^p,\alpha)=c_k(f,\alpha)^p$ for every $k\geq 1$.
        \item $c_{k}(f^p,\alpha)=0$ whenever $p\nmid k$.
        \item $\ores(f^p,\omega,pk)=\ores(f,\omega,k)^p$ for every $k\geq 1$.
        \item $\ores(f^p,\omega,k)=0$ whenever $p\nmid k$.
    \end{enumerate}
    \end{lemma}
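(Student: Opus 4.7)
The plan is to derive all four statements from the additivity of Frobenius in characteristic $p$, applied to the completed local expansion of $f$ at $\alpha$. I expect this to be straightforward and not to present any serious obstacle.

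First, I would realize $f\in\bk$ as a Laurent series $f=\sum_{j\in\Z}b_j\,u_\alpha^j$ in the completed local ring $\Ca((u_\alpha))$, using that $u_\alpha\in\bk$ is a uniformizer at $\alpha$. By the defining expansion \eqref{eq:laurent-coeffs-a}, one has $b_{-k}=c_k(f,\alpha)$ for $k\geq 1$, so items (1) and (2) are statements about the coefficients of $u_\alpha^{-k}$ in $f^p$.

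Second, I would compute $f^p$ using the Frobenius endomorphism. Since $\Ca[[u_\alpha]]$ (hence its fraction field $\Ca((u_\alpha))$) is an $\mathbb{F}_p$-algebra, the map $x\mapsto x^p$ is a ring endomorphism. Truncating $f$ to a polynomial in $u_\alpha$ (after first multiplying by a suitable power of $u_\alpha$ to clear the poles) and applying the multinomial theorem kills every non-diagonal term, since the multinomial coefficients with mixed indices are divisible by $p$; passing to the limit in the $u_\alpha$-adic topology then yields the clean formula
\[
f^p=\sum_{j\in\Z}b_j^p\,u_\alpha^{pj}.
\]
Reading off the negative-index coefficients of this expansion establishes items (1) and (2) simultaneously: the coefficient of $u_\alpha^{-pk}$ is $b_{-pk}^p=c_k(f,\alpha)^p$, while the coefficient of $u_\alpha^{-k}$ vanishes whenever $-k$ is not of the form $pj$, i.e., whenever $p\nmid k$.

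Third, items (3) and (4) fall out by summing items (1) and (2) over the finite set $\{\alpha\in\omega\,:\,c_k(f,\alpha)\neq 0\}$ and invoking Frobenius additivity on $\Ca$ once more, namely that a finite sum of $p$-th powers equals the $p$-th power of the sum. Explicitly, for item (3):
\[
\ores(f^p,\omega,pk)=\sum_{\alpha\in\omega}c_{pk}(f^p,\alpha)=\sum_{\alpha\in\omega}c_k(f,\alpha)^p=\Bigl(\sum_{\alpha\in\omega}c_k(f,\alpha)\Bigr)^p=\ores(f,\omega,k)^p,
\]
and item (4) is immediate from item (2). The only pedantic point is the justification of the Frobenius formula on the completed local ring, which is handled either by continuity for the $u_\alpha$-adic topology on $\Ca[[u_\alpha]]$ or by the direct multinomial computation just sketched.
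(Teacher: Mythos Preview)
Your proposal is correct and follows essentially the same approach as the paper, which simply remarks that (1) and (2) are immediate from the Laurent expansion \eqref{eq:laurent-coeffs-a} and that (3) and (4) follow from Definition~\ref{def:oresa}. One small typo: in the line ``the coefficient of $u_\alpha^{-pk}$ is $b_{-pk}^p=c_k(f,\alpha)^p$'' you mean $b_{-k}^p$ (from $pj=-pk$, so $j=-k$), not $b_{-pk}^p$; the conclusion is unaffected.
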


    \begin{proof}
        Facts (1) and (2) are immediate from the definition \eqref{eq:laurent-coeffs-a}, and they immediately imply facts (3) and (4) by the Definition~\ref{def:oresa}.
    \end{proof}

    \begin{lemma}\label{lem:phi-p} Suppose $\mathrm{char}(\Ca)=:p>0$. Let $\mathbf{P}=(\mathcal{R},\mathcal{U})$ be a $\tau$-pinning of $\ec$ as in Definition~\ref{def:pinninga}, and consider the elements $\varphi^\mathbf{P}_{\omega,0,k}\in\bk$ from Definition~\ref{deflem:phia} and the structural constants $d^\mathcal{U}_{\omega,k}\in\Ca$ from Definition~\ref{def:structural-constants} for $\omega\in\orbita$ and $k\geq 1$. Then we have the relation  
    \begin{equation}\label{eq:phi-p}\varphi^\mathbf{P}_{\omega,0,pk}=\bigl(\varphi^\mathbf{P}_{\omega,0,k}\bigr)^p+\bigl(d^\mathcal{U}_{\omega,k}\bigr)^p\cdot\hat{\varphi}^\mathcal{U}_{\Z s,0,p},\end{equation}
where the $\hat{\varphi^\mathcal{U}_{\Z s,0,k}}\in\bk$ are as in Definition~\ref{deflem:phia} for $k\geq 2$.\end{lemma}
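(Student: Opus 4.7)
The plan is to set $\Psi:=\varphi^\mathbf{P}_{\omega,0,pk}-(\varphi^\mathbf{P}_{\omega,0,k})^p-(d^\mathcal{U}_{\omega,k})^p\hat{\varphi}^\mathcal{U}_{\Z s,0,p}$ and prove $\Psi=0$ by showing $\Psi\in\mathcal{L}([\idec])$ with $c_0^\mathcal{U}(\Psi,\idec)=0$, forcing $\Psi=0$ since $\mathrm{dim}_\Ca\mathcal{L}([\idec])=1$ by the Riemann-Roch Theorem~\ref{thm:rr}. The workhorse is the characteristic-$p$ Frobenius identity: for any Laurent expansion $f=\sum_j a_j u^j\in\Ca((u))$, $f^p=\sum_j a_j^p u^{pj}$. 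A trivial edge case arises when $\omega=\Z s$ with $\alpha_{\Z s}=\idec$ and $k=1$: here $\varphi^\mathbf{P}_{\Z s,0,1}=0$ and $d^\mathcal{U}_{\Z s,1}=1$ by Definition~\ref{def:structural-constants}, so \eqref{eq:phi-p} collapses to the tautology $\varphi^\mathbf{P}_{\Z s,0,p}=\hat{\varphi}^\mathcal{U}_{\Z s,0,p}$, and I may assume we are not in this case.

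I next examine $\Psi$ away from $\idec$. When $\alpha_\omega\neq\idec$, Definition~\ref{deflem:phia}(1) kills every principal-part coefficient of $\varphi^\mathbf{P}_{\omega,0,k}$ at $\alpha_\omega$ apart from the leading $c_k^\mathcal{U}=1$, and applying Frobenius yields the analogous behavior for $(\varphi^\mathbf{P}_{\omega,0,k})^p$ with leading $c_{pk}^\mathcal{U}=1$. This matches the principal part of $\varphi^\mathbf{P}_{\omega,0,pk}$ at $\alpha_\omega$; since $(d^\mathcal{U}_{\omega,k})^p\hat{\varphi}^\mathcal{U}_{\Z s,0,p}$ is regular at $\alpha_\omega$, $\Psi$ is regular there (taking some unspecified constant value). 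At any other point $\alpha\notin\{\alpha_\omega,\idec\}$, each summand defining $\Psi$ is regular, so $\Psi$ is too. In the exceptional sub-case $\omega=\Z s$ with $\alpha_{\Z s}=\idec$ (where necessarily $k\geq 2$), this paragraph is vacuous, as all the action takes place at $\idec$.

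The crucial cancellation occurs at $\idec$. The expansion $\varphi^\mathbf{P}_{\omega,0,k}=-d^\mathcal{U}_{\omega,k}u_\idec^{-1}+0+O(u_\idec)$ (with a further leading $u_\idec^{-k}$ contribution in the exceptional sub-case) combined with Frobenius produces $(\varphi^\mathbf{P}_{\omega,0,k})^p=-(d^\mathcal{U}_{\omega,k})^p u_\idec^{-p}+O(u_\idec^p)$ (and a possible $u_\idec^{-pk}$ term), using $(-1)^p=-1$ in every characteristic. The term $(d^\mathcal{U}_{\omega,k})^p\hat{\varphi}^\mathcal{U}_{\Z s,0,p}$ contributes $(d^\mathcal{U}_{\omega,k})^p u_\idec^{-p}-(d^\mathcal{U}_{\omega,k})^p d^\mathcal{U}_{\Z s,p}u_\idec^{-1}+O(u_\idec)$, so the two $c_p^\mathcal{U}$ contributions cancel and $\Psi$'s expansion at $\idec$ reduces to $\bigl((d^\mathcal{U}_{\omega,k})^p d^\mathcal{U}_{\Z s,p}-d^\mathcal{U}_{\omega,pk}\bigr)u_\idec^{-1}+O(u_\idec)$. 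Combined with the regularity established above, $\Psi\in\mathcal{L}([\idec])$ with $c_0^\mathcal{U}(\Psi,\idec)=0$; Riemann-Roch then forces $\Psi$ to be a constant, necessarily equal to $c_0^\mathcal{U}(\Psi,\idec)=0$. As a byproduct, the residue coefficient must also vanish, yielding the algebraic identity $d^\mathcal{U}_{\omega,pk}=(d^\mathcal{U}_{\omega,k})^p d^\mathcal{U}_{\Z s,p}$.

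The main obstacle will be careful bookkeeping of Laurent expansions across the sub-cases (whether $\alpha_\omega=\idec$ or not, whether $k=1$), and in particular verifying that the Frobenius-generated $u_\idec^{-p}$ coefficient of $(\varphi^\mathbf{P}_{\omega,0,k})^p$ cancels exactly against the leading term of $(d^\mathcal{U}_{\omega,k})^p\hat{\varphi}^\mathcal{U}_{\Z s,0,p}$; this cancellation is the raison d'\^etre of the precise coefficient $(d^\mathcal{U}_{\omega,k})^p$ appearing in \eqref{eq:phi-p}.
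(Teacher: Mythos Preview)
Your proof is correct and follows essentially the same approach as the paper's. The paper's proof is a one-line invocation of the uniqueness of $\varphi^\mathbf{P}_{\omega,0,pk}$ in Definition-Lemma~\ref{deflem:phia} together with Lemma~\ref{lem:p-power-basic-algebraic}(1), which amounts to exactly the Laurent-coefficient bookkeeping you carry out in detail; you have simply unpacked what ``uniqueness'' entails and also observed that Corollary~\ref{cor:phi-p} drops out as an automatic byproduct.
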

\begin{proof}
   This follows from the uniqueness properties of $\varphi^\mathbf{P}_{\omega,0,pk}$ in Lemma~\ref{deflem:phia}, together with Lemma~\ref{lem:p-power-basic-algebraic}(1).
\end{proof}

The following interesting consequence of Lemma~\ref{lem:phi-p} is not needed to prove Theorem~\ref{thm:additive-integrability}, but we record it here for future reference.

\begin{corollary}\label{cor:phi-p} Suppose $\mathrm{char}(\Ca)=p>0$. Let $\mathcal{U}$ be a $\tau$-compatible set of local uniformizers as in Definition~\ref{def:uniformizers}. Then the structural constants $d^\mathcal{U}_{\omega,k}\in\Ca$ of Definition~\ref{def:structural-constants} satisfy \[d^\mathcal{U}_{\omega,pk}=\bigl(d^\mathcal{U}_{\omega,k}\bigr)^pd^\mathcal{U}_{\Z s,p}\] for every $\omega\in\orbita$ and $k\geq 1$.
\end{corollary}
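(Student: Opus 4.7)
The plan is to extract a single scalar identity from the equation of functions supplied by Lemma~\ref{lem:phi-p},
\[
\varphi^\mathbf{P}_{\omega,0,pk} \;=\; \bigl(\varphi^\mathbf{P}_{\omega,0,k}\bigr)^p \;+\; \bigl(d^\mathcal{U}_{\omega,k}\bigr)^p\cdot \hat{\varphi}^\mathcal{U}_{\Z s,0,p},
\]
by reading off the coefficient $c_1^\mathcal{U}(\,\cdot\,,\idec)$ of both sides. First I would extend $\mathcal{U}$ arbitrarily to a full $\tau$-pinning $\mathbf{P}=(\mathcal{R},\mathcal{U})$; since $pk\geq p\geq 2$, the left-hand side falls under the generic case of Definition~\ref{def:structural-constants}(1) whenever $\alpha_\omega\neq\idec$, and in the remaining subcase $\omega=\Z s$ with $\alpha_{\Z s}=\idec$ we instead have $\varphi^\mathbf{P}_{\Z s,0,pk}=\hat{\varphi}^\mathcal{U}_{\Z s,0,pk}$ and invoke Lemma~\ref{lem:d-r-independence}. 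Either way, $c_1^\mathcal{U}\bigl(\varphi^\mathbf{P}_{\omega,0,pk},\idec\bigr)=-d^\mathcal{U}_{\omega,pk}$.

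Next I would treat the two summands on the right separately. The first is a $p$-th power, so by Lemma~\ref{lem:p-power-basic-algebraic}(2) every coefficient $c_j^\mathcal{U}(h^p,\alpha)$ with $p\nmid j$ vanishes; specializing to $j=1$ and $\alpha=\idec$ (which is valid since $p\geq 2$) yields $c_1^\mathcal{U}\bigl((\varphi^\mathbf{P}_{\omega,0,k})^p,\idec\bigr)=0$. For the second summand, the last assertion of Lemma~\ref{lem:d-r-independence} furnishes $c_1^\mathcal{U}\bigl(\hat{\varphi}^\mathcal{U}_{\Z s,0,p},\idec\bigr)=-d^\mathcal{U}_{\Z s,p}$ directly. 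Assembling these three pieces,
\[
-d^\mathcal{U}_{\omega,pk} \;=\; 0 \;+\; \bigl(d^\mathcal{U}_{\omega,k}\bigr)^p\cdot\bigl(-d^\mathcal{U}_{\Z s,p}\bigr),
\]
which is the claimed formula after negating both sides.

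The one case not subsumed above is the slightly exceptional $\omega=\Z s$, $\alpha_{\Z s}=\idec$, $k=1$, where $\varphi^\mathbf{P}_{\Z s,0,1}=0$ and $d^\mathcal{U}_{\Z s,1}=1$ by convention. Here Lemma~\ref{lem:phi-p} degenerates to the tautology $\varphi^\mathbf{P}_{\Z s,0,p}=\hat{\varphi}^\mathcal{U}_{\Z s,0,p}$, and the statement to verify, $d^\mathcal{U}_{\Z s,p}=1^p\cdot d^\mathcal{U}_{\Z s,p}$, is immediate. There is no substantive obstacle here: the heavy lifting has already been carried out in Lemma~\ref{lem:phi-p} (itself a direct consequence of the characterizing conditions of Definition~\ref{deflem:phia} and Frobenius-linearity of local expansions in characteristic $p$), and the Corollary merely reads off a single residue-like coefficient. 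The only point requiring mild attention is keeping the generic and exceptional conventions of Definition~\ref{def:structural-constants} separate, which is why Lemma~\ref{lem:d-r-independence} serves as a convenient bridge.
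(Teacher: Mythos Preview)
Your proof is correct and follows the same approach as the paper, which simply says ``This follows immediately from \eqref{eq:phi-p} and the definitions.'' You have fleshed out exactly what that means: read off $c_1^\mathcal{U}(\,\cdot\,,\idec)$ on both sides of \eqref{eq:phi-p}, using Lemma~\ref{lem:p-power-basic-algebraic}(2) to kill the $p$-th power term and Lemma~\ref{lem:d-r-independence} (together with Definition~\ref{def:structural-constants}) to identify the remaining coefficients with the structural constants.
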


\begin{proof}
    This follows immediately from \eqref{eq:phi-p} and the definitions.
\end{proof}

Finally, the proof of Theorem~\ref{thm:additive-integrability} is given below.

\begin{proof}[Proof of Theorem~\ref{thm:additive-integrability}] Let us extend the choice of $\mathcal{U}$ in the statement of Theorem~\ref{thm:additive-integrability} to that of a $\tau$-pinning $\mathbf{P}=(\mathcal{R},\mathcal{U})$ of $\ec$ as in Definition~\ref{def:pinninga}, with $\mathcal{R}=\{\alpha_\omega\}_{\omega\in\orbita}$ such that $\alpha_{\Z s}=\idec$. Regardless of whether $\delta(f)$ is summable or not and whether conditions (1) and (2) are satisfied or not, we may replace $f$ with the reduced form $\bar{f}$ as in Remark~\ref{rem:reduced-form-a} without loss of generality. Then the ``reduction'' \begin{equation}\label{eq:df-reduced}\overline{\delta(f)}:=\delta\left(\sum_{\omega\in\orbita}\sum_{k\geq 1}\ores^\mathcal{U}(f,\omega,k)\cdot\varphi^\mathbf{P}_{\omega,0,k}\right)\end{equation}  has the property that \[\overline{\delta(f)}-\delta(f)=\delta\Bigl(\pano^\mathbf{P}(f,0)+\pano^\mathbf{P}(f,1)\za{1}{0}\Bigr)\] is summable by Lemma~\ref{lem:delta-zeta-summable}. Thus $\overline{\delta(f)}$ and $\delta(f)$ are either both summable or both not summable. Moreover, since $\overline{\delta(f)}$ has no poles outside of $\mathcal{R}$, we see that $\overline{\delta(f)}$ is summable if and only if $\overline{\delta(f)}=0$, by Lemma~\ref{lem:summable-dispersion}. It remains to show that $\overline{\delta(f)}=0$ if and only if conditions (1) and (2) are satisfied.

In case $\mathrm{char}(\Ca)=p=0$, condition (2) is automatic. By Theorem~\ref{thm:dhrs-additive}, if $\delta(f)$ is summable then condition (1) is satisfied. On the other hand, if condition (1) is satisfied then $\overline{\delta(f)}=0$, whence $\delta(f)$ is summable.

In case $\mathrm{char}(\Ca)=p>0$, $\overline{\delta(f)}=0$ if and only if
\begin{equation}\label{eq:df-reduced-pth-power}
    \sum_{\omega\in\orbita}\sum_{k\geq 1}\ores^\mathcal{U}(f,\omega,k)\cdot\varphi^\mathbf{P}_{\omega,0,k}=\phi^p\qquad\text{for some}\qquad \phi\in\bk.
\end{equation} This is because $\Ca$ is algebraically closed and therefore $\mathrm{ker}(\delta)=\bk^p$, the subfield of $p$-th powers in $\bk$ (note that this would also hold if $\Ca$ were only perfect). It follows from the Definition~\ref{deflem:phia} of the $\varphi^\mathbf{P}_{\omega,n,k}\in\bk$ that if \eqref{eq:df-reduced-pth-power} holds (i.e., if $\delta(f)$ is summable) then condition (1) is satisfied, because $\nu_\alpha(\phi^p)\in p\Z$ for every $\alpha\in\ec(\Ca)$ and $\phi\in\bk$. Thus it remains to show that, under the assumption that condition (1) holds, \eqref{eq:df-reduced-pth-power} and condition (2) are equivalent. By Lemma~\ref{lem:phi-p} we can write \begin{multline}\label{eq:f-reduced-pth-power}\sum_{\omega\in\orbita}\sum_{k\geq 1}\ores^\mathcal{U}(f,\omega,pk)\cdot\varphi^\mathbf{P}_{\omega,0,pk}=\left(\sum_{\omega\in\orbita}\sum_{k\geq 1}\ores^\mathcal{U}(f,\omega,pk)^{\frac{1}{p}}\cdot\varphi^\mathbf{P}_{\omega,0,k}\right)^p\\+\left(\sum_{\omega\in\orbita}\sum_{k\geq 1}\bigl(d^\mathcal{U}_{\omega,k}\bigr)^p\ores^\mathcal{U}(f,\omega,pk)\right)\hat{\varphi}_{\Z s,0,p}.\end{multline} Since $\hat{\varphi}_{\Z s,0,p}\notin \bk^p$, it follows that indeed whenever condition (1) is satisfied, \eqref{eq:df-reduced-pth-power} holds if and only if condition (2) holds.
\end{proof}

\subsubsection{Multiplicative systems}

Next we address the case of a first-order difference system as in \eqref{eq:mat-difeq}:
\begin{equation}\label{eq:multiplicative-mat}
    \tau(y)=ay
\end{equation} for some $0\neq a\in\bk$. Although seemingly simpler than the higher-order additive systems of the form~\eqref{eq:additive-mat}, but it turns out that these first-order systems present some new challenges. We are able to provide a satisfactory characterization of the $\delta$-integrability of \eqref{eq:multiplicative-mat} only in case $\mathrm{char}(\Ca)=0$ (see Theorem~\ref{thm:dlog-integrability-0} below). In case $\mathrm{char}(\Ca)$ our results are less definitive.

The $\delta$-integrability condition \eqref{eq:scal-integrability} for \eqref{eq:multiplicative-mat} is equivalent to the summability of $f:=\delta(a)/a$, which by Theorem~\ref{thm:maina} is in turn equivalent to the vanishing of the orbital and panorbital residues of this $f$. Since $f$ depends only on the divisor $\mathrm{div}(a):=\sum_{\alpha\in\ec(\Ca)}\nu_\alpha(a)$, one would like to have a more meaningful characterization of the $\delta$-integrability of \eqref{eq:multiplicative-mat} in terms of $\mathrm{div}(a)$ itself. The following result represents partial progress in this direction. We adopt the usual convention that $m\equiv n\pmod{0}$ if and only if $m=n$. 

\begin{proposition}\label{prop:dlog-integrability} Let $\mathrm{char}(\Ca)=:p\geq 0$. Let $\mathbf{P}=(\mathcal{R},\mathcal{U})$ be a $\tau$-pinning of $\ec$ as in Definition~\ref{def:pinninga}, with $\mathcal{R}=\{\alpha_\omega\}_{\omega\in\orbita}$, and let $a\in\bk^\times$. Then $\delta(a)/a$ is summable if and only if the following conditions hold.
    \begin{enumerate}
        \item $\displaystyle \sum_{\alpha\in\omega}\nu_\alpha(a)\equiv 0\pmod{p}$ for each $\omega\in\orbita$;
        \item $\displaystyle \sum_{\omega\in\orbita}\sum_{n\in\Z}n\cdot\nu_{\alpha_\omega\oplus ns}(a)\equiv 0\pmod{p}$; and
        \item $\pano^\mathbf{P}\left(\dfrac{\delta(a)}{a},0\right)=0$.
    \end{enumerate}
\end{proposition}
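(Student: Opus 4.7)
The approach is to apply Theorem~\ref{thm:maina} to $f := \delta(a)/a$, by computing all its orbital residues and its first-order panorbital residue directly in terms of $\mathrm{div}(a)$ and matching them against conditions (1) and (2); condition (3) is by definition the vanishing of the zeroth-order panorbital residue $\pano^\mathbf{P}(\delta(a)/a,0)$.

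First I would carry out the local computation of the principal part of $\delta(a)/a$ at an arbitrary $\alpha\in\ec(\Ca)$. Setting $m:=\nu_\alpha(a)$ and writing $a=u_\alpha^m v$ with $v$ a unit in the local ring at $\alpha$, we have $\delta(a)/a=m\cdot\delta(u_\alpha)/u_\alpha+\delta(v)/v$; the second summand is regular at $\alpha$. Since the invariant differential $\varpi$ extends to a nowhere-vanishing form on $\ec$, its local expansion reads $\varpi=(\rho_0+O(u_\alpha))\,du_\alpha$ with $\rho_0\neq0$, so $\delta(u_\alpha)=1/(\rho_0+O(u_\alpha))$ is a unit at $\alpha$ and $\delta(u_\alpha)/u_\alpha$ has only a simple pole there. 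This yields $c_1^\mathcal{U}(\delta(a)/a,\alpha)=m\cdot\delta(u_\alpha)(\alpha)$ and $c_k^\mathcal{U}(\delta(a)/a,\alpha)=0$ for all $k\geq 2$. The nonzero scalar $\xi_\omega:=\delta(u_\alpha)(\alpha)\in\Ca^\times$ is independent of the representative $\alpha$ within its orbit $\omega$: the $\tau$-compatibility $\tau(u_{\alpha\oplus s})=u_\alpha$ together with the relation $\tau\delta=\delta\tau$ (cf.\ the proof of Lemma~\ref{deflem:a-zeta-difference}) force $\delta(u_\alpha)(\alpha)=\delta(u_{\alpha\oplus s})(\alpha\oplus s)$.

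Consequently, every orbital residue of $\delta(a)/a$ of order $\geq 2$ vanishes automatically, while $\ores^\mathcal{U}(\delta(a)/a,\omega,1)=\xi_\omega\cdot\sum_{\alpha\in\omega}\nu_\alpha(a)$ vanishes in $\Ca$ exactly when the integer $\sum_{\alpha\in\omega}\nu_\alpha(a)\equiv 0\pmod{p}$, matching condition (1). For the first-order panorbital residue I would invoke Proposition~\ref{prop:diff-pano}: letting $\varpi':=c\varpi$ denote the unique invariant form, for the unique $c\in\Ca^\times$, such that $\nu_\idec(du_\idec-\varpi')\geq 1$, we have $(\delta(a)/a)\varpi'=c\cdot da/a$, and the classical logarithmic residue formula $\mathrm{res}(da/a,\alpha)=\nu_\alpha(a)$ yields
\[
\pano^\mathbf{P}(\delta(a)/a,1)=c\cdot\sum_{\omega\in\orbita}\sum_{n\in\Z}n\cdot\nu_{\alpha_\omega\oplus ns}(a),
\]
which vanishes in $\Ca$ exactly when the integer double sum is $\equiv 0\pmod{p}$, matching condition (2). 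Combined with the tautological identification of (3) with $\pano^\mathbf{P}(\delta(a)/a,0)=0$, Theorem~\ref{thm:maina} gives the claimed equivalence.

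The proof encounters no serious obstacle; the only bookkeeping subtlety is the translation between integer-valued valuations $\nu_\alpha(a)$ and their images in $\Ca$ in positive characteristic. In sharp contrast with the additive case treated in Theorem~\ref{thm:additive-integrability}, no genuine characteristic-$p$ anomaly arises here, precisely because the higher-order local coefficients $c_k^\mathcal{U}(\delta(a)/a,\alpha)$ for $k\geq 2$ all vanish from the outset, leaving nothing to produce a $p$-th power correction analogous to \eqref{eq:f-reduced-pth-power}.
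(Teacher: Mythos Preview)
Your argument is correct, and it takes a more direct route than the paper. The paper first establishes the special case where $\mathcal{U}$ is \emph{supercompatible} with $\nu_\alpha(du_\alpha-\varpi)\geq 1$ for every $\alpha$ (so that $\delta(u_\alpha)(\alpha)=1$ identically), and then reduces the general $\tau$-pinning to this special one by invoking the change-of-uniformizer machinery of Lemma~\ref{lem:pano-pinning-au} and Corollary~\ref{cor:pano-pinning-au} to check that the vanishing of the various (pan)orbital residues transfers between pinnings. You bypass this reduction entirely: working with the given $\mathcal{U}$, you observe that $\xi_\omega:=\delta(u_\alpha)(\alpha)$ is a well-defined nonzero constant on each orbit (via $\tau\delta=\delta\tau$ and $\tau(u_{\alpha\oplus s})=u_\alpha$), so that $\ores^\mathcal{U}(f,\omega,1)=\xi_\omega\sum_{\alpha\in\omega}\nu_\alpha(a)$ already matches condition~(1) without any normalization of uniformizers; and for $\pano^\mathbf{P}(f,1)$ you go straight through Proposition~\ref{prop:diff-pano} and the logarithmic residue formula. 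This is cleaner and shorter. The paper's detour does have one incidental payoff: it makes explicit that, once (1) and (2) hold, condition~(3) is actually independent of the choice of $\mathbf{P}$---a fact your argument does not need but also does not surface.
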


\begin{proof}
     Let us write $f:=\frac{\delta(a)}{a}$. We first prove this in the special case where $\mathcal{U}=\{u_\alpha\}_{\alpha\in\ec(\Ca)}$ is supercompatible, in the sense of Remark~\ref{rem:super-compatible-d-independence}, which we can choose such that $\nu_\alpha(du_\alpha-\varpi)\geq 1$ for \emph{every} $\alpha\in\ec(\Ca)$, rather than just $\alpha=\idec$ as in Proposition~\ref{prop:diff-pano}. Then \[c^\mathcal{U}_1(f,\alpha)=\mathrm{res}\left(\frac{da}{a},\alpha\right)\equiv\nu_\alpha(a) \pmod{p}\] for each $\alpha\in\ec(\Ca)$. Since $f$ has at worst first-order poles, condition~(1) is equivalent to the vanishing of every $\ores^\mathcal{U}(f,\omega,k)$. Similarly, it follows from Proposition~\ref{prop:diff-pano} that condition~(2) is equivalent to the vanishing of $\pano^\mathbf{P}(f,1)$. Thus by Theorem~\ref{thm:maina}, this concludes the proof of the Proposition~\ref{prop:dlog-integrability} in the special case, and also proves that the summability of $f$ implies all the conditions in the general case.
     
      It follows from the triangularity of the linear system \eqref{eq:u-ores-effect} that the property that all the orbital residues of $f$ vanish is independent of the choice of $\mathcal{U}$. Thus if (1) holds then every $\ores^\mathcal{U}(f,\omega,k)$ vanishes with respect to any $\mathcal{U}$, since they would vanish for the special choice. Similarly, by Lemma~\ref{lem:pano-pinning-au}, the vanishing of $\pano^\mathbf{P}(f,1)$ is independent of the choice of $\mathcal{U}$. So once again, if condition~(2) holds then $\pano^\mathbf{P}(f,1)=0$ regardless of the choice of $\mathcal{U}$, since it would vanish for the special choice. Finally, although the vanishing of $\pano^\mathbf{P}(f,0)$ is \emph{not} independent of the choice of $\mathcal{U}$ in general, this vanishing is independent of the choice of $\mathcal{U}$ for those $f\in\bk$ such that its orbital residues as well as its first-order panorbital residue all vanish, as we see from Lemma~\ref{lem:pano-pinning-au} and Corollary~\ref{cor:pano-pinning-au}. Thus if conditions (1), (2), and (3) hold for our special choice of $\mathcal{U}$ if and only if they all hold for any other choice of $\mathcal{U}$, thus reducing the general case to the special case already proved.
    \end{proof}
    
    The statement of Proposition~\ref{prop:dlog-integrability} above is somewhat unsatisfactory, due to the occurrence of the condition (3) that $\pano^\mathbf{P}(\delta(a)/a,0)=0$. One expects that this condition should be replaced with another simpler equivalent condition whose dependence on the data in $\mathrm{div}(a)$ is less mysterious. In fact, in characteristic zero the condition is entirely superfluous.

    \begin{theorem}\label{thm:dlog-integrability-0} Suppose $\mathrm{char}(\Ca)=0$. Let $\mathcal{R}=\{\alpha_\omega\}_{\omega\in\orbita}$ be a set of \mbox{$\tau$-orbit} representatives in $\ec(\Ca)$, as in Definition~\ref{def:pinninga}, and let $a\in\bk^\times$. Then $\delta(a)/a$ is summable if and only if the following conditions hold.
        \begin{enumerate}
            \item $\displaystyle \sum_{\alpha\in\omega}\nu_\alpha(a)=0$ for each $\omega\in\orbita$; and
            \item $\displaystyle \sum_{\omega\in\orbita}\sum_{n\in\Z}n\cdot\nu_{\alpha_\omega\oplus ns}(a)=0$.
        \end{enumerate}
        \end{theorem}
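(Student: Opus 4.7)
The forward implication is Proposition~\ref{prop:dlog-integrability} specialized to characteristic zero, so I focus on the converse. Assuming (1) and (2), set $f := \delta(a)/a$. Since $f$ has at worst simple poles, Proposition~\ref{prop:diff-pano} and the identity $\mathrm{res}(da/a,\alpha) = \nu_\alpha(a)$ (valid in characteristic zero) imply that (1) and (2) respectively force the vanishing of every orbital residue of $f$ and of $\pano^{\mathbf{P}}(f,1)$, for any $\tau$-pinning $\mathbf{P}=(\mathcal{R},\mathcal{U})$. By Lemma~\ref{lem:pano-pinning-au} and Corollary~\ref{cor:pano-pinning-au}, the remaining quantity $\pano^{\mathbf{P}}(f,0)$ is then pinning-independent, and by Proposition~\ref{prop:dlog-integrability} its vanishing is precisely what remains to be shown.

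The plan is to reduce to the case $\Ca = \C$ by a Lefschetz-style transfer. For fixed integer multiplicities $m_{\omega,n}$ satisfying (1) and (2), the value $\pano^{\mathbf{P}}(f,0)$ depends algebraically on finitely many parameters (the Weierstrass coefficients of $\ec$, the coordinates of $s$, and the coordinates of the points in $\mathrm{supp}(\mathrm{div}(a))$), so it vanishes identically as a function of these parameters as soon as it vanishes on a sufficiently large subset of parameter values arising from elliptic curves with non-torsion $s$ over $\C$.

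So, assuming $\Ca = \C$, $\ec = \ecl$, and $\bk = \bkl$, I would write
\[ a(z) = C\cdot\prod_{\omega,n}\sigma_\Lambda(z - \alpha_\omega - ns)^{m_{\omega,n}}\cdot e^{Az + B}, \]
where $\sigma_\Lambda$ is the Weierstrass sigma function, the lifts $\alpha_\omega + ns \in \C$ are chosen consistently with our pinning $\mathcal{R}$, and $m_{\omega,n} = \nu_{\alpha_\omega \oplus n\check{s}}(a)$. The quasi-periodicity $\sigma_\Lambda(z + \lambda) = \pm\sigma_\Lambda(z)\exp(\eta_\Lambda(\lambda)(z + \lambda/2))$ and Legendre's relation $\eta_\Lambda(\lambda_1)\lambda_2 - \eta_\Lambda(\lambda_2)\lambda_1 = 2\pi i$ for a $\Z$-basis $\lambda_1,\lambda_2$ of $\Lambda$, together with the requirement $a(z+\lambda) = a(z)$, force
\[ A\lambda \equiv \eta_\Lambda(\lambda) \cdot \Xi \pmod{2\pi i \Z} \quad\text{for every } \lambda \in \Lambda, \quad\text{where}\quad \Xi := \sum_{\omega,n} m_{\omega,n}(\alpha_\omega + ns). \]
Condition (1) kills the contribution $\sum_\omega(\sum_n m_{\omega,n})\alpha_\omega$ to $\Xi$, and condition (2) kills the remaining $s\cdot\sum_{\omega,n} n m_{\omega,n}$; hence $\Xi = 0$ exactly in $\C$, the congruence reduces to $A\lambda \in 2\pi i \Z$ for every $\lambda \in \Lambda$, and the $\mathbb{R}$-linear independence of $\lambda_1,\lambda_2$ forces $A = 0$.

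Taking the logarithmic derivative then gives $\delta(a)/a = \sum_{\omega,n} m_{\omega,n}\,\zl(z - \alpha_\omega - ns) + A$, and comparison with Proposition~\ref{prop:zetaExp-l} identifies $\pano^{\mathcal{R}}(\delta(a)/a, 0) = c_0^{\mathcal{R}}(\delta(a)/a) = A = 0$; via Remark~\ref{rem:pano-analogue}, this is the vanishing of the algebraic $\pano^{\mathbf{P}}(f,0)$ back in the general algebraic setting, completing the argument. The hard part will be phrasing the transfer step cleanly enough to turn the analytic computation into a rigorous algebraic statement over arbitrary $\Ca$; an alternative self-contained route would decompose $a$ via Riemann-Roch into a product of a constant and basic principal elements such as those with divisor $[\alpha_\omega \oplus ns] + [\idec] - [\alpha_\omega] - [ns]$ or $[ns] - n[s] + (n-1)[\idec]$, then verify summability of each factor's logarithmic derivative by telescoping identities built from $\tau$-shifts of $\za{1}{0}$ as in Definition~\ref{deflem:a-zeta-difference}.
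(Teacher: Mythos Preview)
Your strategy---reduce to $\C$ via a Lefschetz-type argument, then use an analytic uniformization to factor $a$ and read off $\pano(f,0)$---is exactly the paper's. The analytic step is correct; your sigma-function computation is the Weierstrass twin of the paper's theta-function computation over a Tate uniformization $\ecq$, where one writes $a=\gamma z^{-m}\prod\theta_q(\hat{s}^{-n}\hat{\alpha}_\omega^{-1}z)^{\nu_{\alpha_\omega\oplus ns}(a)}$ and shows that conditions (1) and (2) force the integer $m=\pano^{\hat{\mathcal{R}}}(f,0)$ to vanish. The paper's choice of Tate over Weierstrass is not incidental: the same argument then runs verbatim over a non-Archimedean complete field, which is what yields Corollary~\ref{cor:dlog-integrability-0} in arbitrary characteristic under a non-integrality hypothesis on $j(\ec)$; your Weierstrass route would not give this for free.

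The one place your write-up is genuinely softer than the paper's is the transfer step, and you correctly flag it. Rather than trying to realize $\pano^{\mathbf{P}}(f,0)$ as a regular function on a moduli space (which is awkward because the orbit decomposition, and hence the pinning and the structural constants $e^{\mathbf{P}}_{\omega,n,k}$, only make sense on the non-constructible locus where $s$ is non-torsion), the paper transfers \emph{summability} directly: after embedding a countable algebraically closed subfield of $\Ca$ into $\C$, one observes that ``$f=\tau(g)-g$ for some $g\in\bk$'' is equivalent, via writing $g=G_1(\wp)+G_2(\wp)\wp'$ with undetermined coefficients, to the existence of a $\Ca$-point on an affine variety defined over $\Ca$; having produced a $\C$-point analytically, the Nullstellensatz supplies a $\Ca$-point. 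This sidesteps any need to make $\pano(f,0)$ vary in a family and would resolve the difficulty you identify.
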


        \begin{proof} That conditions (1) and (2) hold if $\delta(a)/a$ is summable is already proved in Proposition~\ref{prop:dlog-integrability}. We prove the opposite implication first in the special case where $\Ca=\C$. In this case we may choose a Tate parameterization $\ecq$ of $\ec$ (see \cite[\S V.1]{silvermanAdvanced}). Choose a lift $\hat{s}\in\C^\times$ such that $\check{\hat{s}}=s\in\ec(\C)$. Let us also upgrade the $\mathcal{R}$ in the statement to a $\tau$-pinning $\hat{\mathcal{R}}=\{\hat{\alpha}_\omega\}_{\omega\in\orbitq}\subset\C^\times$ of $\ecq$ in the sense of Definition~\ref{def:pinningq}, such that $\check{\hat{\alpha}}_\omega=\alpha_\omega\in\ec(\C)$ for each orbit $\omega\in\orbita=\orbitq$. It is well-known (see e.g.~\cite[p.~14]{Roquette}) that we may then express \begin{equation}\label{eq:theta-expansion}
        \begin{gathered}
            a=\gamma
            z^{-m}\prod_{\omega\in\orbitq}\prod_{n\in\Z}\theta_q(\hat{s}^{-n}\hat{\alpha}_\omega^{-1}z)^{\nu_{\alpha_\omega\oplus ns}(a)};\qquad \text{where} \\
             m\in\Z \qquad\text{such that}\quad
            \prod_{\omega\in\orbitq}\prod_{n\in\Z}(\hat{s}^n\hat{\alpha}_\omega)^{\nu_{\alpha_\omega\oplus ns}(a)}=q^m;
            \end{gathered}
        \end{equation} $\gamma\in\C^\times$ is some irrelevant constant; and the theta function $\theta_q(z)$ in \eqref{eq:q-theta-def} satisfies the functional equation $\theta_q(qz)=-z^{-1}\theta_q(z)$. Then for $\delta=z\frac{d}{dz}$ the Euler derivation, we have that \[\frac{\delta(a)}{a}=m-\sum_{\omega\in\orbitq}\sum_{n\in\Z}\nu_{\alpha_\omega\oplus ns}(a)\cdot\zq(\hat{s}^{-n}\hat{\alpha}_\omega^{-1}z).\] We again confirm in this setting that conditions (1) and (2) respectively correspond to the vanishing of all the (Tate) orbital residues and the (Tate) first-order panorbital residue. But now we also obtain that $\pano^{\hat{\mathcal{R}}}\bigl(\frac{\delta(a)}{a},0\bigr)=m$. It remains to show that conditions (1) and (2) imply that $m=0$ already. To see this, let us write $f:=\frac{\delta(a)}{a}$ as before, and simply note that \[q^m=\left(\prod_{\omega,n}\hat{s}^{n\nu_{\alpha_\omega\oplus ns}(a)}\right)\cdot\left(\prod_{\omega,n}\hat{\alpha}_\omega^{\nu_{\alpha_\omega\oplus ns}(a)}\right)=\hat{s}^{\pano^{\hat{\mathcal{R}}}(f,1)}\cdot\prod_{\omega}\hat{\alpha}_\omega^{\ores(f,\omega,1)}=1.\]
        
        The general statement for our arbitrary algebraically closed field $\Ca$ of characteristic zero follows from the usual ``Lefschetz principle''-type argument: one can replace $\Ca$ with a countable algebraically closed subfield containing all the necessary data (the coordinates of $s$, the coefficients of the Weierstrass equation \eqref{eq:alg-weierstrass-eq}, the coefficients of $a$ as a rational function in $\Ca(\wp,\wp')$), and embed this field in $\C$. The same equation \eqref{eq:alg-weierstrass-eq} defines now an algebraic elliptic curve $\ec_\C$ over $\C$, which one can analytify to obtain the Riemann surface $\ec_\C^\mathrm{an}$, which admits a Tate parameterization as above. We can write $f=\frac{\delta(a)}{a}$ uniquely as \[f=F_1(\wp)+F_2(\wp)\cdot\wp',\] where $F_1(X),F_2(X)\in\Ca(X)$ are rational functions. The analytic proof of the existence of $g(z)\in\C(\wp_q(z),\wp_q'(z))=\C(\wp,\wp')$ such that $\tau(g)-g=f$ implies that there exist rational functions $G_1(X),G_2(X)\in\C(X)$ such that \begin{equation}\label{eq:formally-summable}\bigl(F_1(X),F_2(X)\bigr)=T\bigl(G_1(X),G_2(X)\bigr)-\bigl(G_1(X),G_2(X)\bigr),\end{equation} where $T:\Ca(X)\times\Ca(X)$ is the $\Ca$-rational transformation formally deduced from the algebraic addition-by-$s$ formulas on $\ec$ \cite[\S III.2]{SilvermanIntro} (see also \eqref{eq:delta-zeta-summable-tau}). After replacing the $\C$-coefficients in the rational expressions for $g=G_1(\wp)+G_2(\wp)\wp'$ with formal unknowns, the relation \eqref{eq:formally-summable} defines an affine (not necessarily irreducible) algebraic variety over $\Ca$. Having proved that this variety has a $\C$-point, it must also have a $\Ca$-point, since $\Ca$ is algebraically closed. Therefore if $f$ is summable in $\C(\wp,\wp')$, then $f$ is also summable in $\Ca(\wp,\wp')=\bk$.
        \end{proof}

        In fact, the same argument above also works in arbitrary characteristic, under some supplementary assumptions on $\Ca$ and on $\ec$, to show a related one-directional result.

        \begin{corollary} \label{cor:dlog-integrability-0}
            Let $\mathcal{R}=\{\alpha_\omega\}_{\omega\in\orbita}$ be a set of \mbox{$\tau$-orbit} representatives in $\ec(\Ca)$, as in Definition~\ref{def:pinninga}, and let $a\in\bk^\times$. Suppose moreover that $\Ca$ admits a non-Archimedean absolute value $| \ \ |$ with respect to which the $j$-invariant $|j(\ec)|>1$, and that the following conditions hold:
        \begin{enumerate}
            \item $\displaystyle \sum_{\alpha\in\omega}\nu_\alpha(a)=0$ for each $\omega\in\orbita$; and
            \item $\displaystyle \sum_{\omega\in\orbita}\sum_{n\in\Z}n\cdot\nu_{\alpha_\omega\oplus ns}(a)=0$.
        \end{enumerate}
        Then $\delta(a)/a$ is summable.
        \end{corollary}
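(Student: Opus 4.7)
The plan is to reduce to the Tate setting of Section~\ref{sec:tate}, which already works in arbitrary characteristic, and then descend the resulting summability relation back to $\Ca$ via the Lefschetz-style linear algebra argument already used at the end of the proof of Theorem~\ref{thm:dlog-integrability-0}.

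First I will extend $| \ \ |$ to a complete algebraically closed extension $\tilde{\Ca}\supseteq\Ca$ (for example, by alternately completing and algebraically closing). Since $|j(\ec)|>1$ persists over $\tilde{\Ca}$, the base change $\ec_{\tilde{\Ca}}$ admits a Tate uniformization $\ec_{\tilde{\Ca}}(\tilde{\Ca})\simeq\tilde{\Ca}^\times/q^\Z$ for some $q\in\tilde{\Ca}^\times$ with $|q|<1$, which is valid in arbitrary characteristic (see the references cited at the start of Section~\ref{sec:tate}). The field $\bk\otimes_\Ca\tilde{\Ca}$ is thereby identified with the Tate field $\bkq$, and $\tau$ corresponds to multiplication by some lift $\hat{s}\in\tilde{\Ca}^\times$ of $s$, necessarily multiplicatively independent from $q$. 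I will also lift each $\alpha_\omega$ to $\hat{\alpha}_\omega\in\tilde{\Ca}^\times$ so as to upgrade $\mathcal{R}$ to a $\tau$-pinning $\hat{\mathcal{R}}$ of $\ec_{\tilde{\Ca}}$ in the sense of Definition~\ref{def:pinningq}.

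Next I will factor $a$ over the Tate uniformization exactly as in \eqref{eq:theta-expansion}, obtaining
\[a=\gamma\, z^{-m}\prod_{\omega\in\orbita}\prod_{n\in\Z}\theta_q\bigl(\hat{s}^{-n}\hat{\alpha}_\omega^{-1}z\bigr)^{\nu_{\alpha_\omega\oplus ns}(a)},\]
with $m\in\Z$ characterized by $\prod_{\omega,n}\bigl(\hat{s}^n\hat{\alpha}_\omega\bigr)^{\nu_{\alpha_\omega\oplus ns}(a)}=q^m$ and some $\gamma\in\tilde{\Ca}^\times$. The calculation from the proof of Theorem~\ref{thm:dlog-integrability-0} then carries over verbatim: conditions (1) and (2) of the corollary correspond precisely to the vanishing of the Tate orbital residues and of the Tate first-order panorbital residue of $\delta(a)/a$ in $\bkq$, and together with the multiplicative independence of $\hat{s}$ and $q$ they force the product defining $q^m$ to equal $1$, hence $m=0$. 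In particular $\pano^{\hat{\mathcal{R}}}(\delta(a)/a,0)=m=0$ as well, so Theorem~\ref{thm:mainq} yields that $\delta(a)/a$ is summable in $\bkq$.

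The final step will be to descend summability from $\bkq$ back to $\bk$ exactly as in the concluding paragraph of the proof of Theorem~\ref{thm:dlog-integrability-0}. Writing the sought-after $g=G_1(\wp)+G_2(\wp)\cdot\wp'$ with $G_1,G_2$ rational of degrees bounded by the divisor of $\delta(a)/a$, the summability relation $\tau(g)-g=\delta(a)/a$ becomes a finite system of $\Ca$-linear equations in the unknown coefficients of $G_1$ and $G_2$. This system admits a solution over $\tilde{\Ca}$, so the ranks of its coefficient and augmented matrices coincide, a rank condition which is insensitive to the extension $\Ca\hookrightarrow\tilde{\Ca}$; therefore the system also has a solution over $\Ca$. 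The hardest point is paragraph three: confirming that the \emph{integer}-valued conditions (1) and (2) (rather than merely their mod-$p$ reductions from Proposition~\ref{prop:dlog-integrability}) are precisely what is needed to force $m=0$ on the nose, and that the theta-factorization and structural-constant identifications transported through the Tate uniformization remain intact in positive characteristic.
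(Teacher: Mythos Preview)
Your proposal is correct and follows essentially the same approach as the paper: pass to an extension admitting a Tate uniformization (the paper simply takes the completion $\boldsymbol{\Ca}$ of $\Ca$, which is already algebraically closed for non-archimedean valuations, rather than iterating completion and algebraic closure), repeat the $\theta$-factorization computation from the proof of Theorem~\ref{thm:dlog-integrability-0} to see that the integer conditions (1) and (2) force $m=0$, and then descend. The only notable difference is in the descent step: you argue via linear algebra (bounding $g$ to a fixed Riemann--Roch space and invoking invariance of rank under field extension), whereas the paper phrases it as the existence of a $\Ca$-point on an affine variety that is already known to have a $\tilde{\Ca}$-point; both arguments are valid and yield the same conclusion.
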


        \begin{proof}
            As in the proof of Theorem~\ref{thm:dlog-integrability-0}, we first prove the result in the special case where $\Ca$ is already complete with respect to $| \ \ |$. The non-integrality condition on the $j$-invariant of $\ec$ is precisely what is needed in order for $\ec$ to admit a (unique!) Tate parameterization as $\ecq$ \cite[Thm.~VII]{Roquette}. After replacing every instance of $\C$ with $\Ca$, the same argument in the first part of the proof of Theorem~\ref{thm:dlog-integrability-0} proves our result in this special case. The proof of the general case follows from the special case by the same argument as in the second part of the proof of Theorem~\ref{thm:dlog-integrability-0}, after replacing every instance of $\C$ with the completion $\boldsymbol{\Ca}$ of $\Ca$ with respect to $| \ \ |$.
        \end{proof}
        
    Two first-order systems $\tau(y)=ay$ and $\tau(y)=a'y$ as in \eqref{eq:multiplicative-mat} are said to be \emph{gauge equivalent} is there exists $r\in\bk^\times$ such that $a=a'\frac{\tau(r)}{r}$. The following concluding result, which is again valid in arbitrary characteristic, completely characterizes when the first-order system \eqref{eq:multiplicative-mat} is gauge equivalent to a constant system $\tau(y)=cy$ with $c\in\Ca^\times$, which is a strictly stronger property than that of being only $\delta$-integrable (see Remark~\ref{rem:constant-gauge-equivalent} below).

 \begin{theorem}\label{thm:constant-gauge-equivalent}
     Let $\mathcal{R}=\{\alpha_\omega\}_{\omega\in\orbita}$ be a set of $\tau$-orbit representatives in $\ec(\Ca)$, as in Definition~\ref{def:pinninga}, and let $a\in\bk^\times$. There exist $r\in\bk^\times$ and $c\in\Ca^\times$ such that $a=c\cdot\tau(r)/r$ if and only if the following conditions hold:
     \begin{enumerate}
         \item $\displaystyle\sum_{\alpha\in\omega}\nu_\alpha(a)=0$ for each $\omega\in\orbita$;
         \item $\displaystyle \sum_{\omega\in\orbita}\sum_{n\in\Z}n\cdot\nu_{\alpha_\omega\oplus ns}(a)=0$; and
         \item $\displaystyle\sum_{\omega\in\orbita}\sum_{n\in\Z}n\cdot\nu_{\alpha_\omega\oplus ns}(a)\cdot\alpha_{\omega} \oplus \displaystyle\sum_{\omega\in\orbita}\sum_{n\in\Z}\dfrac{n(n+1)}{2}\cdot \nu_{\alpha_{\omega}\oplus ns}(a)\cdot s=\idec$.
     \end{enumerate}
 \end{theorem}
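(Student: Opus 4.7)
The plan is to translate the condition $a=c\cdot\tau(r)/r$ into a principality question for a candidate divisor on $\ec$, and then invoke Abel's theorem for elliptic curves (see e.g.~\cite[Cor.~III.3.5]{SilvermanIntro}): a divisor $D\in\mathrm{Div}(\ec)$ is principal if and only if $\mathrm{deg}(D)=0$ and $\bigoplus_{\alpha\in\ec(\Ca)}\nu_\alpha(D)\cdot\alpha=\idec$. Given $r\in\bk^\times$, set $m_{\omega,n}:=\nu_{\alpha_\omega\oplus ns}(r)$. Since $\nu_\alpha\bigl(\tau(r)\bigr)=\nu_{\alpha\oplus s}(r)$, one finds that
\[\mathrm{div}\bigl(\tau(r)/r\bigr)=\sum_{\omega\in\orbita}\sum_{n\in\Z}(m_{\omega,n+1}-m_{\omega,n})[\alpha_\omega\oplus ns],\]
so $a=c\cdot\tau(r)/r$ for some $c\in\Ca^\times$ is equivalent to having $\nu_{\alpha_\omega\oplus ns}(a)=m_{\omega,n+1}-m_{\omega,n}$ for every $\omega\in\orbita$ and $n\in\Z$. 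Discrete summation by parts applied twice to each finitely-supported sequence $(m_{\omega,n})_{n\in\Z}$ yields the pivotal identities
\[\sum_{n\in\Z}n\cdot(m_{\omega,n+1}-m_{\omega,n})=-\!\sum_{n\in\Z}m_{\omega,n},\quad\sum_{n\in\Z}\tfrac{n(n+1)}{2}(m_{\omega,n+1}-m_{\omega,n})=-\!\sum_{n\in\Z}n\cdot m_{\omega,n}.\]

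For the forward direction, telescoping $\nu_{\alpha_\omega\oplus ns}(a)=m_{\omega,n+1}-m_{\omega,n}$ on each orbit immediately gives (1); summing the first identity above over $\omega\in\orbita$ yields $\sum_{\omega,n}n\cdot\nu_{\alpha_\omega\oplus ns}(a)=-\mathrm{deg}\bigl(\mathrm{div}(r)\bigr)=0$, which is (2); and combining both identities to rewrite the left-hand side of (3) produces $\ominus\bigoplus_{\omega,n}m_{\omega,n}\cdot(\alpha_\omega\oplus ns)$, which equals $\idec$ by Abel's theorem applied to $\mathrm{div}(r)$.

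For the reverse direction, assume (1), (2), and (3), and define $m_{\omega,n}:=\sum_{k<n}\nu_{\alpha_\omega\oplus ks}(a)\in\Z$. Condition (1) forces $m_{\omega,n}=0$ for $|n|$ sufficiently large within each orbit, and only finitely many orbits contribute, so $D_r:=\sum_{\omega\in\orbita}\sum_{n\in\Z}m_{\omega,n}[\alpha_\omega\oplus ns]$ is a genuine divisor with $m_{\omega,n+1}-m_{\omega,n}=\nu_{\alpha_\omega\oplus ns}(a)$. The same summation-by-parts identities now convert conditions (2) and (3) into $\mathrm{deg}(D_r)=0$ and $\bigoplus_{\alpha\in\ec(\Ca)}\nu_\alpha(D_r)\cdot\alpha=\idec$, respectively, so by Abel's theorem $D_r$ is principal. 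Choose any $r\in\bk^\times$ with $\mathrm{div}(r)=D_r$; then $\mathrm{div}(a)=\mathrm{div}\bigl(\tau(r)/r\bigr)$, whence $a/\bigl(\tau(r)/r\bigr)$ has trivial divisor and therefore lies in $\mathcal{L}(0)-\{0\}=\Ca^\times$, giving the desired $a=c\cdot\tau(r)/r$.

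The proof is essentially bookkeeping once one recognizes conditions (2) and (3) as the two Abel-Jacobi obstructions (degree zero and vanishing group-sum) for the candidate divisor $D_r$. The main care required is sign-tracking across the two applications of summation by parts, and noting that $n(n+1)/2$ is always integral, so that the group-sum in (3) is unambiguously defined over $\Z$ regardless of the characteristic of $\Ca$.
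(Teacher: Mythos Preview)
Your proof is correct and follows essentially the same approach as the paper's: both construct a candidate divisor $D_r$ (the paper calls it $-D$) satisfying $\mathrm{div}(\tau(r)/r)=\mathrm{div}(a)$ and then invoke the Abel--Jacobi theorem to identify conditions (2) and (3) with the degree-zero and evaluation-zero conditions for principality. Your packaging via discrete summation by parts is slightly more streamlined than the paper's explicit coefficient computation, but the underlying argument is the same.
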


 \begin{proof} Let us denote by $\tau^*$ the additive endomorphism of $\mathrm{Div}(\ec)$ defined by $\tau^*([\alpha])=[\alpha\ominus s]$, so that $\tau^*(\mathrm{div}(a))=\mathrm{div}(\tau(a))$. Consider the divisor
     \begin{equation}\label{eq:gauge-divisor-def}D:=\sum_{\omega\in\orbita}\left(\sum_{n\geq 1}\nu_{\alpha_\omega\oplus ns}(a)\sum_{i=1}^n\bigl[\alpha_\omega\oplus is\bigr]-\sum_{n\leq -1}\nu_{\alpha_\omega\oplus ns}(a)\sum_{i=n+1}^{0}\bigl[\alpha_\omega\oplus is\bigr]\right)\end{equation} 
     
    We claim that $D-\tau^*(D)=\mathrm{div}(a)$ if and only if condition $(1)$ holds. It suffices to prove that $D_\omega-\tau^*(D_\omega)=\mathrm{div}_\omega(a)$, where the $\omega$-component $D_\omega$ of $D$ is as in Definition~\ref{def:tau-divisors}(3), and $\mathrm{div}_\omega(a)$ similarly denotes the $\omega$-component of $\mathrm{div}(a)$. Indeed, since we can also write  
     \begin{align*}
         D_\omega &= \sum\limits_{i\geq 1}[\alpha_{\omega}\oplus is]\sum\limits_{n\geq i} \nu_{\alpha_{\omega}\oplus ns}(a) - \sum\limits_{i\leq 0}[\alpha_{\omega}\oplus is]\sum\limits_{n\leq i-1} \nu_{\alpha_{\omega}\oplus ns}(a);\\
         \tau^*(D_\omega) &= \sum\limits_{i\geq 0}[\alpha_{\omega}\oplus is]\sum\limits_{n\geq i+1} \nu_{\alpha_{\omega}\oplus ns}(a) - \sum\limits_{i\leq -1}[\alpha_{\omega}\oplus is]\sum\limits_{n\leq i} \nu_{\alpha_{\omega}\oplus ns}(a);
     \end{align*}
     we find that the difference 
     \[D_\omega-\tau^*(D_\omega) = [\alpha_{\omega}]\sum\limits_{i\neq 0} (-\nu_{\alpha_{\omega}\oplus is}(a)) + \sum\limits_{i\neq 0} [\alpha_{\omega}\oplus is]\nu_{\alpha_{\omega}\oplus is}(a).\]
     This is precisely the divisor of $a$ if and only if $\sum\limits_{i\neq 0}(-\nu_{\alpha_{\omega}\oplus is}(a)) = \nu_{\alpha_{\omega}}(a)$ for each orbit $\omega\in\orbita$, which is equivalent to condition $(1)$. We also claim that $\deg(D)=0$ if and only if condition (2) holds. Indeed, in any case we have 
     \[\mathrm{deg}(D)=\sum\limits_{\omega\in\orbita}\sum\limits_{n\in\mathbb{Z}} n\cdot \nu_{\alpha_{\omega}\oplus ns}(a).\]
    Finally, we see that condition (3) holds if and only if the evaluation $\mathrm{ev}(D)=\idec$. Indeed, in any case we have  \[\mathrm{ev}(D)=\displaystyle\sum_{\omega\in\orbita}\sum_{n\in\Z}n\cdot\nu_{\alpha_\omega\oplus ns}(a)\cdot\alpha_{\omega} \oplus \displaystyle\sum_{\omega\in\orbita}\sum_{n\in\Z}\dfrac{n(n+1)}{2}\cdot \nu_{\alpha_{\omega}\oplus ns}(a)\cdot s.\]
    
    Thus by the Abel-Jacobi Theorem \cite[Cor.~III.3.5]{SilvermanIntro}, if conditions (2) and (3) hold then there exists $r\in\bk^\times$ such that $\mathrm{div}(r)=-D$ as in \eqref{eq:gauge-divisor-def}, and if we further have condition (1) then $\mathrm{div}\bigl(a\cdot\frac{r}{\tau(r)}\bigr)=\mathrm{div}(r)-D+\tau^*(D)=0$, i.e., $c:=a\cdot\frac{r}{\tau(r)}\in\Ca^\times$.

    Suppose on the other hand that there exist $r\in\bk$ and $c\in\Ca^\times$ such that $a=c\cdot\frac{\tau(r)}{r}$. Then for each $\omega\in\orbita$, we must have \[\sum_{\alpha\in\omega}\nu_\alpha(a)=\sum_{\alpha\in\omega}(\nu_\alpha(\tau(r))-\nu_\alpha(r))=\mathrm{deg}(\mathrm{div}_\omega(r))-\mathrm{deg}(\mathrm{div}_\omega(r))=0.\] Hence condition (1) is satisfied, and therefore the divisor $D$ in \eqref{eq:gauge-divisor-def} satisfies \[D-\tau^*(D)=\mathrm{div}(a)=\mathrm{div}(\tau(r))-\mathrm{div}(r)=(-\mathrm{div}(r))-\tau^*(-\mathrm{div}(r)).\] It follows that $D+\mathrm{div}(r)$ is fixed by $\tau^*$. But the only fixed point of $\tau^*$ on $\mathrm{Div}(\ec)$ is the trivial divisor $0$, and therefore $D=-\mathrm{div}(r)$. Then $\mathrm{deg}(D)=-\mathrm{deg}(r)=0$ and $\mathrm{ev}(D)=\ominus\mathrm{ev}(\mathrm{div}(r))=\idec$, again by the Abel-Jacobi Theorem \cite[Cor.~III.3.5]{SilvermanIntro}, which as we saw above implies that conditions (2) and (3) are also satisfied. 
 \end{proof}

\begin{remark}\label{rem:constant-gauge-equivalent}
    It is easy to see that if \eqref{eq:multiplicative-mat} is equivalent to a constant system then it is $\delta$-integrable. Indeed, if $a=c\cdot\frac{\tau(r)}{r}$ as in Theorem~\ref{thm:constant-gauge-equivalent} then $b=\frac{\delta(r)}{r}$ witnesses the $\delta$-integrability condition \eqref{eq:scal-integrability} for $a_0=a$. 
    
    For difference systems as in \eqref{eq:mat-difeq} but over the projective line over $\C$, with $\bk$ replaced with $\C(x)$, and with $\tau$ replaced with one of the ``standard'' difference operators $x\mapsto x+1$ (shift); or $x\mapsto qx$ for some $q\in\Ca^\times$ of infinite multiplicative order; or $x\mapsto x^m$ for some $m\in\Z_{\geq 2}$ (Mahler); it is proved in the landmark result of \cite{schaefke-singer:2019} that a suitable notion of differential integrability of the difference system is equivalent to its gauge equivalence to a constant system. Based on this, one might have expected a similar equivalence to hold also for difference equations over elliptic curves.
    
    But as we first learned from Hardouin and Roques \cite{hardouin-roques:2023}, for a difference system \eqref{eq:mat-difeq} over an elliptic curve $\ec$ such as we consider here, its differential integrability no longer implies that it is gauge equivalent to a constant system. In \cite{hardouin-roques:2023} they showed us the fundamental counterexample to this: the additive system of the form \eqref{eq:additive-mat} with $f$ the elliptic function in $\bkl$ described in Lemma~\ref{lem:l-zeta-difference}: \begin{equation}\label{eq:hr-example}\tau(Y)=\begin{pmatrix}
        1 & \zl(z-s)-\zl(z) \\ 0 & 1
    \end{pmatrix}Y,\end{equation} which is $\frac{d}{dz}$-integrable (as we now see more generally from Theorem~\ref{thm:additive-integrability}, since all of its orbital residues vanish), but one can show that it is not gauge equivalent to a constant system. In fact, an additive system of the form $\tau(Y)=\left(\begin{smallmatrix}1 & f \\ 0 & 1\end{smallmatrix}\right)Y$ as in \eqref{eq:additive-mat} if and only if there exists $g\in\bk$ such that $f=\tau(g)-g+c$ for some $c\in\Ca$. As we now understand from our Theorem~\ref{thm:maina} (cf.~Remark~\ref{rem:reduced-form-a}) this occurs precisely when every (pan)orbital residue of $f$ vanishes except for $\pano(f,0)=c$. The computation in Example~\ref{ex:zeta-difference-l}, that the order $1$ panorbital residue of $\zl(z-s)-\zl(z)$, ``explains'' why the system \eqref{eq:hr-example} cannot possibly be gauge equivalent to a constant system.
\end{remark}

\section*{Acknowledgements}

The example \eqref{eq:hr-example} was the original source of inspiration for the development of panorbital residues in terms of the expansion \eqref{eq:zetaExp-l} in the second author's doctoral dissertation \cite{Babbitt2025} in the Weierstrass case (as in Section~\ref{sec:weierstrass}), and subsequently for the development in \cite{Babbitt2025} of a prototype of the panorbital residues in the algebraic case that we develop in Section~\ref{sec:algebraic}, and subsequently for the development of the panorbital residues in the Tate case that we carry out in Section~\ref{sec:tate} for the first time. Needless to say, this work would not have been possible without the generosity of Hardouin and Roques in sharing their insights with us in \cite{hardouin-roques:2023}. We are very thankful to them. We are also grateful for illuminating discussions with Eric Rains and with Michael Singer, which deepened our knowledge and helped us sharpen our thinking on these matters.

\end{document}